\numberwithin{equation}{section}
\newtheorem{theorem}{Theorem}[section]
\newtheorem{proposition}[theorem]{Proposition}
\newtheorem{lemma}[theorem]{Lemma}
\newtheorem{remark}[theorem]{Remark}
\newtheorem*{definition*}{Definition}
\newcommand{\cali}[1]{\mathscr{#1}}
\newcommand{\supp}{{\rm supp}}
\newcommand{\Jac}{\mathrm{Jac}}
\renewcommand{\Im}{\mathop{\mathrm{Im}}}
\newcommand{\length}{\mathop{\mathrm{length}}}
\newcommand{\dist}{\mathop{\mathrm{dist}}\nolimits}
\newcommand{\ddc}{{\rm dd^c}}
\newcommand{\dc}{{\rm d^c}}
\newcommand{\dd}{{\rm d}}
\newcommand{\dbar}{\overline\partial}
\newcommand{\ep}{\epsilon}
\newcommand{\vep}{\varepsilon}
\newcommand{\Leb}{{\rm Leb}}
\newcommand{\Vol}{{\rm Vol}}
\newcommand{\Ac}{\cali{A}}
\newcommand{\Bc}{\cali{B}}
\newcommand{\Cc}{\cali{C}}
\newcommand{\Fc}{\cali{F}}
\newcommand{\Oc}{\cali{O}}
\newcommand{\Rc}{\cali{R}}
\newcommand{\W}{\cali{W}}
\newcommand{\cM}{\mathcal{M}}
\newcommand{\cI}{\mathcal{I}}
\newcommand{\FS}{{\rm FS}}
\newcommand{\B}{\mathbb{B}}
\newcommand{\D}{\mathbb{D}}
\newcommand{\C}{\mathbb{C}}
\newcommand{\N}{\mathbb{N}}
\newcommand{\Z}{\mathbb{Z}}
\newcommand{\R}{\mathbb{R}}
\renewcommand\P{\mathbb{P}}
\newcommand{\lp}{\langle}
\newcommand{\rp}{\rangle}
\newcommand{\norm}[1]{\lVert#1\rVert}
\newcommand{\oA}{\mathcal{A}}
\newcommand{\oB}{\mathcal{B}}
\newcommand{\oF}{\mathcal{F}}
\newcommand{\oR}{\mathcal{R}}
\newcommand{\oE}{\mathcal{E}}
\newcommand{\oS}{\mathcal{S}}
\newcommand{\oL}{\mathcal{L}}
\newcommand{\diam}{{\rm diam}}
\newcommand{\oI}{\mathcal{I}}
\newcommand{\oJ}{\mathcal{J}}
\newcommand{\oD}{\mathcal{D}}
\newcommand{\oK}{\mathcal{K}}
\newcommand{\fh}{\mathfrak{h}}
\newcommand{\bH}{\mathbf{H}}
\newcommand{\bP}{\mathbf{P}}
\title{Hole probabilities of random zeros on compact Riemann surfaces}
\author{Hao Wu}
\address{Department of Mathematics,  National University of Singapore - 10, Lower Kent Ridge Road - Singapore 119076}
\email{e0011551@u.nus.edu}
\author{Song-Yan Xie}
\address{Academy of Mathematics and System Science \& Hua Loo-Keng Key Laboratory of Mathematics, Chinese Academy of Sciences, Beijing 100190, China;
	School of Mathematical Sciences, University of Chinese Academy of Sciences, Beijing 100049, China}
\email{xiesongyan@amss.ac.cn}
\date{}
\thanks{}
\begin{document}

\begin{abstract}
We establish a convergence speed estimate for hole probabilities of zeros of random holomorphic sections on compact Riemann surfaces.  
\end{abstract}

\clearpage\maketitle
\thispagestyle{empty}

\noindent\textbf{Mathematics Subject Classification 2020:} 31A15, 32C30, 32L10, 60B10, 60D05.

\medskip

\noindent\textbf{Keywords:} line bundle,  holomorphic section, hole probability, Abel-Jacobi map,  quasi-potential, Green function, Fekete points.

\setcounter{tocdepth}{1}
\tableofcontents\normalfont

\section{Introduction}
Distribution of zeros of random polynomials is a classical subject with rich literature, 
initiated by the works of Waring~\cite{T-book},  Bloch-P\'olya~\cite{BP},
and henceforth developed intensively by the works of
Littlewood-Offord, Hammersley, Kac, Erd\"os-Tur\'an, see~\cite{MR1468385, blo-shi-mrl, MR1308023} for a review and references therein. 

\medskip

In this article, we study the distribution of zeros of random sections of  holomorphic line
bundles on compact Riemann surfaces. More precisely, 
based on the celebrated work~\cite{zel-imrn} of Zelditch (see also~\cite{shi-zel-zre-ind, zei-zel-imrn}) and a recent advance~\cite{DGW-hole-event}, 
we establish a convergence speed estimate of the hole probabilities. Some inspiration also takes from the approach of~\cite{Nishry-Wennman} 
on hole probabilities of Gaussian entire functions.

\medskip

Here is our setting.  
Let $X$ be a compact Riemann surface, equipped with a  K\"ahler form $\omega_0$. Let $\oL$ be a positive holomorphic line bundle on $X$, {\em i.e.}, having  degree $\deg (\oL)>0$. Fix a Hermitian metric  $\fh$ of $\oL$  with strictly positive curvature form $c_1(\oL, \fh)$. We will regard the normalized $(1, 1)$-form $\omega:=c_1(\oL, \fh)/\deg (\oL)$ as a probability measure on $X$, since $\int_X \omega=1$.

\smallskip

 For every positive integer $n$,  the $n$-th power  $\oL^n:=\oL^{\otimes n}$ of the line bundle $\oL$ inherits  a natural metric $\fh_n$ induced by $\fh$, {\em i.e.}, $\norm{s^{\otimes n}(x)}_{\fh_n}:=\norm{s(x)}_\fh^n$ for local sections $s$ of $\oL$ around a point $x\in X$. Let $(\cdot,\cdot)_n$ denote the pointwise Hermitian inner product corresponding to the Hermitian metric $\fh_n$.  On the space $H^0(X,\oL^n)$ of global holomorphic sections of $\oL^n$, we can define a global Hermitian inner product
\begin{equation*}
\lp s_1,s_2\rp_n:=\int_X (s_1,s_2)_n \,\omega_0     \quad  \text{for} \quad  s_1,s_2\in H^0(X,\oL^n).
\end{equation*}
    By the Riemann-Roch theorem,  the global sections  of $\oL^n$ are abundant for $n\gg 1$, that is, $\dim_{\C} H^0(X,\oL^n)= n\cdot \deg (\oL)-g+1\gg 1$. On  the projectivized space $\P H^0(X,\oL^n)$, we consider the  Fubini-Study volume form $V^{\FS}_n$ induced by $\lp \cdot,\cdot\rp_n$. 

\smallskip

The zero set $Z_s$ of a section $s$ in $H^0(X,\oL^n)\setminus \{0\}$, or of an element $[s]$ in $\P H^0(X,\oL^n)$, contains $n\cdot\deg(\oL)$ points counting with multiplicities. Denote by $[Z_s]$ the sum of Dirac masses at the points in $Z_s$.  The
Poincar\'e-Lelong formula states that
$$[Z_s ]=\ddc\log \norm{s}_{\fh_n}+n\cdot\deg (\oL)\, \omega.$$
Here,  $\dc:=\frac{i}{2\pi}(\dbar-\partial)$ and $\ddc=\frac{i}{\pi}\partial\dbar$. 
The \textit{empirical probability measure} of $Z_s$ is given by the unit mass normalization $$\llbracket Z_s \rrbracket:= (n\cdot\deg(\oL))^{-1} [Z_s].$$ 
The distribution of $\llbracket Z_s \rrbracket$ for $s\in H^0(X,\oL^n) \setminus \{0\}$ with respect to the complex Gaussian measure on $H^0(X,\oL^n)$ associated with $\lp \cdot,\cdot\rp_n$
is equal to the distribution of $\llbracket Z_s \rrbracket$ for $[s]\in \P H^0(X,\oL^n)$ with respect to $V^{\FS}_n$~(cf.~\cite[Section 2]{shi-zel-gafa}). 

\medskip
The study of the zero
distribution of random holomorphic sections has  rich literature, cf.~\cite{bay-ind,survey-random-hol,bay-com-mar-TAMS,ble-shi-zel-invent,coman-marin-Nguyen,din-ma-mar-jfa,din-ma-ngu-ens,din-sib-cmh,dre-liu-mar,mar-vu-crelle, MR3519982, shiiffman-jga,shi-zel-cmp,shi-zel-gafa,shi-zel-pamq,shi-zel-zh-jus}.
Notably, specialized in complex dimension one, a celebrated theorem of
Shiffman and Zelditch~\cite{shi-zel-cmp} implies that,
under the distribution $V^{\FS}_n$, the zeros of sections in $\P H^0(X,\oL^n)$ are equidistributed with respect to $\omega$,
{\em i.e.}, for any smooth test function $\phi$ on $X$, there holds
$$ \lim_{n\to \infty} \int_{\P H^0(X,\oL^n)}  \lp \llbracket Z_s \rrbracket, \phi \rp  \, \dd V^{\FS}_n(s)  =\int_X \phi \,\omega.$$

\smallskip

Our goal is to study the hole probabilities of this distribution, in the vein of~\cite{DGW-hole-event,shi-zel-zre-ind, zei-zel-imrn, zel-imrn}. Precisely, for an open subset $D$  in $X$ with $\overline{D}\neq X$, for every large $n$, we consider the hole event 
$$H_{n,D}:=\big\{[s]\in \P H^0(X,\oL^n), Z_s \cap D=\varnothing    \big\}  $$
 about 
 holomorphic sections of $\oL^n$ non-vanishing on $D$.

\medskip

 To state our main result, we need to employ a key functional $\cI_{\omega,D}$ introduced in~\cite{DGW-hole-event}, whose initial model was proposed by~Zeitouni-Zelditch \cite{zei-zel-imrn} (see also~\cite{gho-nis-con, gho-nis-cpam,Nishry-Wennman, zel-imrn}).  Let $\cM(X\setminus D)$ be the set of probability measures supported on $X\setminus D$.  Define 
\begin{equation}
\label{functional I_r}
\cI_{\omega,D}(\mu):=-\int_X U_\mu \, \omega -\int_X U_\mu \, \dd\mu = -\int_X U'_\mu \,\dd \mu +2 \max U'_\mu    ,   \quad \forall\,\mu\in\cM(X\setminus D).
\end{equation}
Here, the function $U_\mu$ (resp.\
$U'_\mu$) is  the $\omega$-potential of type M (resp.\ type I) of $\mu$, see Section~\ref{subsec-potential}.
 When $D$ is non-empty, $\oI_{\omega, D}$ is strictly positive.
  We abbreviate $\oI_D: =\oI_{\omega,  D}$ for convenience, as  $\omega$ is fixed throughout this article. 
One merit of $\oI_D$ is that it admits a unique minimizer $\nu_D \in \cM(X\setminus  D)$, which coincides with the limit  of the conditional expectations $\mathbf{E}\big(\llbracket Z_s \rrbracket \,|\, H_{n, D}\big)$ of the empirical probability measures $\llbracket Z_s \rrbracket$
 of the hole events
  $[s]\in H_{n, D}$ as $n$ tends to infinity.
  A surprising phenomenon is that
   the support of $\nu_D$ avoids an open
  ``forbidden set'' other than $D$  (cf.~\cite{DGW-hole-event}).
  Such phenomenon first appeared in the  setting of Gaussian entire functions~\cite{gho-nis-cpam,Nishry-Wennman}.

\medskip

An open set $E$ in $X$ is said to have smooth boundary $\partial E$, if for every $x\in \partial E$, there is some neighborhood $B$ of $x$ and a smooth diffeomorphism $\psi: E\to \D$, where $\D$ is the unit disc in $\C$, such that 
$$\psi(E\cap B)=\D\cap \{\Im (z) >0\}, \quad    \text{and}\quad    \psi(\partial E  \cap B)=\D\cap \R.$$

For technical convenience, from now on, we assume that the hole $D$ has \textbf{smooth boundary}  and that $\partial D$ \textbf{consists of only finitely many components}.
 Our  main result is about the exponential decay of hole probabilities $\bP_n(H_{n,D})$.

\begin{theorem} \label{thm-main-speed}
There exist a $C_D>0$ independent of $n$, and a $C>0$ independent of $n,D$, such that 
\begin{equation}
    \label{thm 1 estimate}
-\min\oI_D- C_D  {\log n \over n}  \leq {1\over n^2} \log \bP_n(H_{n,D}) \leq -\min\oI_D+C  {\log n \over n}  
\end{equation}
for all sufficiently large $n$.
\end{theorem}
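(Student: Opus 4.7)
I would prove the two sides of \eqref{thm 1 estimate} by independent arguments, both built on the Poincar\'e--Lelong formula, which identifies (up to an additive constant) $\tfrac{1}{n\deg \oL}\log \|s\|_{\fh_n}$ with the $\omega$-potential $U_{\llbracket Z_s \rrbracket}$. For the \emph{upper bound} on $\bP_n(H_{n,D})$, observe that whenever $s$ lies in the hole event, the empirical measure $\mu_s := \llbracket Z_s\rrbracket$ belongs to $\cM(X\setminus D)$. Writing $\bP_n(H_{n,D})$ as an integral over the unit sphere of $H^0(X,\oL^n)$ with respect to the Gaussian/Haar equivalence, and using the Bergman kernel asymptotics to compare $L^2$ and sup norms of sections up to a factor polynomial in $n$, the integrand at each $s$ in the hole event is bounded by $\exp(-n^2\,\oI_D(\mu_s))$ times a polynomial prefactor in $n$. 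Replacing $\oI_D(\mu_s)$ by $\min \oI_D$ and absorbing the prefactor into $e^{O(n\log n)}$ yields the stated upper bound with error $O(\log n / n)$.

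For the \emph{lower bound} on $\bP_n(H_{n,D})$, I would explicitly exhibit a large family of hole sections. Let $N := n\deg \oL$. First, discretize $\nu_D$ by an atomic measure $\tfrac{1}{N}\sum_{j=1}^N \delta_{p_j}$ supported on points $p_j \in X\setminus \overline{D}$, chosen to be a weighted Fekete configuration for the energy functional naturally associated with $\nu_D$; the key quantitative input is that such a configuration approximates $U_{\nu_D}$ uniformly on $X$ to error $O(\log n / n)$. Next, realize the divisor $\sum_j p_j$ as the zero divisor of an explicit section $s_0 \in H^0(X,\oL^n)$ via the Abel--Jacobi map, possibly after perturbing a bounded number of the $p_j$ to correct for the line bundle class in $\Jac(X)$. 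The uniform potential approximation then gives $\|s_0\|_{L^2}^2 \geq \exp(-n^2 \min \oI_D - O(n\log n))$, and a Fubini--Study neighborhood of $[s_0]$ of radius $\sim n^{-C}$ remains inside $H_{n,D}$ and contributes enough measure to produce the claimed lower bound.

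The principal difficulty lies in executing the discrete approximation of $\nu_D$ with the right precision. One needs Fekete-type configurations that simultaneously approximate $U_{\nu_D}$ to $L^\infty$-error $O(\log n / n)$, remain at a uniform positive distance from $\partial D$, and can be adjusted to meet the Abel--Jacobi obstruction in $\Jac(X)$. The smoothness of $\partial D$ and the finiteness of its components enter crucially here, guaranteeing the regularity of $U_{\nu_D}$ and of the equilibrium measure $\nu_D$ near $\partial D$ that underlies the effective Fekete approximation. Matching the polynomial factors on both sides at the same rate $\log n / n$ in the final exponent, while also controlling the constant $C_D$ on the lower-bound side (which legitimately depends on $D$ through the geometry of $\partial D$), is the most delicate bookkeeping step and will likely be the main technical challenge.
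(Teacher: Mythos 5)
Your proposal captures the broad architecture of the paper's proof, but there are two places where it would not go through as stated, and one where you use a different tool than the paper.

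\textbf{Upper bound.} You suggest using Bergman-kernel asymptotics to compare $L^2$ and sup norms of sections. The paper instead relies on Zelditch's explicit bosonization formula for the pullback $\Ac_m^*(V_n^{\FS})$ to the symmetric product $X^{(m)}$, which expresses the density exactly in terms of the pairwise Green energy $\oE_m(\mathbf p)$ and the normalization $\oF_m(\mathbf p)\approx\max U'_{\delta_{\mathbf p}}$. Bergman-kernel comparisons alone do not directly produce the Jacobian of the map from a section to its zero configuration, which is what yields the integrand $\exp[m^2(\oE_m-\frac{2(m+1)}{m}\oF_m)]$. Your route may be made to work but it is not merely a bookkeeping substitute; you would need to supply this zero-parametrization density from scratch.

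\textbf{Lower bound (genuine gap).} After constructing $s_0$ whose zeros closely follow a Fekete configuration, you propose that a Fubini--Study ball of radius $\sim n^{-C}$ around $[s_0]$ stays inside $H_{n,D}$. This is not clear, and I think it is the main flaw: $\nu_D$ has a nontrivial component $\nu_{\partial D}$ supported on $\partial D$, so a large fraction of the Fekete points (hence zeros of $s_0$) are forced to sit at distance $o(1)$ from $\partial D$, with many within distance $m^{-5}$ or closer. A Rouché-type estimate cannot then guarantee that a fixed-radius Fubini--Study perturbation keeps all those zeros on the correct side of $\partial D$. The paper sidesteps this by never taking a Fubini--Study ball in $\P H^0$: it instead perturbs each zero $p_j$ within $\B(p_j,m^{-5})\setminus D$ in $X^{(m)}$-coordinates, so containment in $H_{n,D}$ is automatic, and the relevant volume bound $\Vol(\Omega_{m,D})\geq m^{-11m}$ uses the smoothness of $\partial D$ only to guarantee that each half-disc $\B(p_j,m^{-5})\setminus D$ has area $\gtrsim m^{-10}$. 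The Jacobian of this zero parametrization is exactly what Zelditch's formula provides, so the density integrates directly.

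\textbf{Abel--Jacobi correction (underspecified).} Your phrase ``perturb a bounded number of the $p_j$ to correct for the line bundle class'' is the right idea, but it masks the actual difficulty. The perturbed points must simultaneously (i) solve the Abel--Jacobi equation, (ii) stay in $X\setminus D$, (iii) stay away from $\pi_g(\Rc_g)\cup\W$, and (iv) remain uniformly separated from one another and from all the retained Fekete points. The paper's Lemma 6.1 handles all of this with a volume-comparison argument on lattices in $\Jac(X)$, choosing $k$ groups of $g$ auxiliary points by perturbing a fixed grid; the separation is needed because otherwise the Green-energy term $\oE_m$ would blow down and the density estimate would fail. Without that lemma, one does not have the quantitative $O(\log m/m)$ control after the correction.

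You correctly identify the role of the H\"older regularity of $U_{\nu_D}$ (Section 3 of the paper) in guaranteeing Fekete-point separation (the paper's Proposition 4.10), and the need for $\partial D$ to be smooth with finitely many components. The strategy is sound, but the Fubini--Study ball shortcut in your lower bound is the step I would single out as the substantive obstacle.
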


Here, we assume $n\gg 1$ to ensure that $\bP_n(H_{n,D})\neq 0$, since when $X\neq \P^1$ the hole event $H_{n,D}$ can possibly be empty for certain small  $n\in \mathbb{Z}_+$. 
Note that~\eqref{thm 1 estimate} is a strengthen on the convergence speed of the following
hole probability result 
\begin{equation}
    \label{DGW estimate}
\lim_{n\rightarrow \infty}{1\over n^2} \log \bP_n(H_{n,D})
=
-\min\oI_D
\end{equation}
established in~\cite[Lemma 7.2]{DGW-hole-event}.
As a consequence of Theorem~\ref{thm-main-speed}, in dimension one we can recover an estimate established by Shiffman-Zelditch-Zrebiec~\cite{shi-zel-zre-ind} on the exponential decay $\exp(-c_D  n^{t+1})$ of the hole probabilities with respect to a fixed open domain $D$ in a projective manifold of any dimension $t\geq 1$. For the case that $\P^t, t\geq 1$, equipped with the Fubini-Study metric, 
some good asymptotic formulas of
the hole probabilities of polydiscs were established by  Zrebiec~\cite{zre-michigan} and Zhu~\cite{zhu-anapde}.
 Nevertheless, even in the simplest case that  $(X,\oL,\omega)=(\P^1,\Oc(1),\omega_\FS)$, the convergence speed estimate $O(\log n /n)$ in Theorem~\ref{thm-main-speed} is new (cf.~\cite{ zhu-anapde, zre-michigan}).

\medskip

Our second result concerns  asymptotic behavior of the hole probabilities for  discs with varying radii. Fix a   radius $r_0>0$ such that $\overline\B(x,r_0)\neq X$ for any $x\in X$.

\begin{theorem}\label{thm-main-hole}
There exist  constants  $C>c>0$ independent of $x,r$, such that 
\begin{equation}
    \label{thm 2 estimate}
 -C r^4\leq  \lim_{n\to \infty} {1\over n^2} \log \bP_n(H_{n,\B(x,r)})\leq -c  r^4,
 \qquad \forall\, x\in X,\quad
\forall\,0<r\leq r_0.
\end{equation}
\end{theorem}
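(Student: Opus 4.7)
\medskip
\noindent\textbf{Proof proposal.}
By~\eqref{DGW estimate}, the statement reduces to the sandwich
\[
c\,r^4\;\leq\;\min\oI_{\B(x,r)}\;\leq\;C\,r^4
\]
for constants $C>c>0$ independent of $x\in X$ and $r\in(0,r_0]$. Normalizing $U_\mu$ by $\int_X U_\mu\,\omega=0$ and using $\ddc U_\mu=\mu-\omega$ with Stokes on the compact surface $X$, one rewrites~\eqref{functional I_r} as
\[
\oI_{\B(x,r)}(\mu)\;=\;-\int_X U_\mu\,\ddc U_\mu\;=\;\int_X dU_\mu\wedge\dc U_\mu,
\]
so $\oI_{\B(x,r)}(\mu)$ is a positive multiple of the global Dirichlet energy of $U_\mu$. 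The problem thus reduces to sandwiching this Dirichlet energy.

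\medskip
\noindent\emph{Upper bound.} I would work in a local coordinate around $x$ in which $\omega=g(z)\,\tfrac{i}{2\pi}dz\wedge d\bar z$, with $g$ bounded between positive constants depending only on $(X,\omega)$ (possible after a finite covering by such charts, uniformly in $x$). With $A_r:=\overline{\B(x,2r)}\setminus\B(x,r)$, introduce the probability measure
\[
\mu_r\;:=\;\omega|_{X\setminus\B(x,2r)}+\beta_r\,\omega|_{A_r},\qquad \beta_r:=\frac{\omega(\B(x,2r))}{\omega(A_r)},
\]
supported on $X\setminus\B(x,r)$. Then $\mu_r-\omega$ is a bounded signed density, of zero total mass, supported in $\overline{\B(x,2r)}$. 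The scaling $z=rw$ turns $\ddc U_{\mu_r}=\mu_r-\omega$ into a Poisson equation on the unit disc with source of size $O(r^2)$, giving $|\nabla U_{\mu_r}|=O(r)$ on $\B(x,2r)$ and thus an inner energy contribution $O(r^4)$. For the exterior, writing $U_{\mu_r}$ as a Green-kernel integral and Taylor-expanding around $x$, the vanishing of the total mass together with a first $\omega$-moment of $\mu_r-\omega$ of size $O(r^4)$ yields $|\nabla U_{\mu_r}(z)|\lesssim r^4/\dist(z,x)^2$, contributing only $O(r^6)$ to the energy. Hence $\min\oI_{\B(x,r)}\leq C r^4$.

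\medskip
\noindent\emph{Lower bound.} For any competitor $\mu$ supported on $X\setminus\B(x,r)$, one has $\ddc U_\mu=-\omega$ on $\B(x,r)$, equivalently $\Delta U_\mu=-2g\leq -2g_-<0$ in the local chart. I would then perform the Dirichlet decomposition $U_\mu=V+W$ on $\B(x,r)$ with $W|_{\partial\B(x,r)}=0$ and $V$ harmonic, so that the $L^2$-orthogonality $\int\nabla V\cdot\nabla W\,dA=0$ gives
\[
\int_{\B(x,r)}|\nabla U_\mu|^2\,dA\;\geq\;\int_{\B(x,r)}|\nabla W|^2\,dA\;=\;\int_{\B(x,r)}W\cdot 2g\,dA.
\]
Comparison with the explicit radial solution $\widetilde W(z):=\tfrac{g_-}{2}(r^2-|z|^2)$ of $\Delta\widetilde W=-2g_-$ with zero boundary values yields $W\geq\widetilde W\geq 0$ by the maximum principle, so $\int W\,dA\gtrsim r^4$ and the Dirichlet energy is bounded below by $c\,r^4$; this gives $\min\oI_{\B(x,r)}\geq c\,r^4$.

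\medskip
\noindent\emph{Main obstacle.} The principal technical point is the uniformity of the constants over $x\in X$, which is achieved by a finite covering of $X$ by charts with uniformly two-sided bounded Kähler density, so that the Euclidean scaling estimates transport with constants depending only on $(X,\omega,r_0)$. A mildly more delicate aspect of the upper bound is verifying that the non-local part of $U_{\mu_r}$ outside $\B(x,2r)$ contributes no more than $O(r^4)$ to the energy: this rests on the vanishing total mass and small first moment of $\mu_r-\omega$, combined with the smoothness of the Green function of $(X,\omega)$ away from the diagonal.
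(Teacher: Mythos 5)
The reduction
\[
\oI_{\B(x,r)}(\mu)=-\int_X U_\mu\,\ddc U_\mu
\]
is not correct, and this error is fatal to your upper bound. The first expression in~\eqref{functional I_r} is written with the type-M normalization $\max U_\mu=0$. If you switch to the type-I normalization $\int_X U'_\mu\,\omega=0$, the identity you get is
\[
\oI_{\B(x,r)}(\mu)=-\int_X U'_\mu\,\ddc U'_\mu+2\max U'_\mu,
\]
which is the Dirichlet energy \emph{plus} $2\max U'_\mu$, not the Dirichlet energy alone. Since $\max U'_\mu\geq 0$ (the type-I potential has mean zero), you only get a one-sided comparison $\oI\geq$ Dirichlet energy, so your lower-bound argument (orthogonal decomposition on $\B(x,r)$, comparison with an explicit radial barrier) is sound and is a clean alternative to the paper's Jensen/rescaling argument.

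The upper bound, however, fails. For your competitor $\mu_r=\omega|_{X\setminus\B(x,2r)}+\beta_r\omega|_{A_r}$ the type-I potential $U'_{\mu_r}$ is harmonic, hence essentially constant, on $X\setminus\B(x,2r)$, and a direct radial computation (even in the flat model) shows that the dip of $U'_{\mu_r}$ between infinity and $x$ is $\asymp r^2$, so $\max U'_{\mu_r}\asymp r^2$. Consequently $\oI_{\B(x,r)}(\mu_r)\asymp r^2$, not $O(r^4)$: you have only bounded the Dirichlet part and missed the dominant $2\max U'_{\mu_r}$ term. (Your intuition that the first $\omega$-moment of $\mu_r-\omega$ is $O(r^4)$ controls the \emph{gradient} of $U'$ far away, but it does not make the constant offset between the far field and the value at $x$ small.) The paper circumvents exactly this issue by building the competitor through its type-M potential: Lemma~\ref{function-Phi} produces $\Phi_r$ that is identically $0$ (not just constant) outside $\B(x,\ell r)$, with $|\Phi_r|\leq\alpha r^2$ on $\B(x,\ell r)$. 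For $\mu:=\ddc\Phi_r+\omega$ one then has $\max U'_\mu=-\int_X\Phi_r\,\omega\lesssim r^2\cdot\omega(\B(x,\ell r))\lesssim r^4$, which is the estimate your construction lacks. To salvage your approach you would have to engineer the competitor so that its type-M potential vanishes identically outside a ball of radius $O(r)$, which is precisely the content of the paper's Lemma~\ref{function-Phi}.
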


Previously, 
such hole probabilities with a parameter $r$ on the hole size were not known for zeros of random holomorphic sections,   
except in some very  special cases for which {\em tour de force} calculation is possible, see~\cite{ zhu-anapde, zre-michigan} for such exceptions in dimension one and higher.   
The estimate~\eqref{thm 2 estimate} is inspired by several classical works  in recent decades.
In the  setting of Gaussian  entire functions
$F(z):=\sum_{n=1}^\infty a_n z^n/ \sqrt {n!}$ with the i.i.d.\ standard complex Gaussian  coefficients $a_j$, Sodin-Tsirelson~\cite{sod-tsi-isrj} established that the hole probability with respect to 
a disc of radius $r$ decays as $\exp(-c  r^4)$ when $r$ tends to infinity.  An optimal constant $c$  was obtained by  Nishry~\cite{nis-imrn}.    Recently, Buckley-Nishry-Peled-Sodin \cite{buc-nis-ron-sod-ptrf} studied the hole probabilities for  zeros of hyperbolic Gaussian Taylor series with finite radii of convergence. 
See also~\cite{blo-dau-aop,blo-shi-mrl,gho-nis-con,gho-nis-cpam,cho-zei-imrn,kri-jsp,kur-ska-math-stud,nis-jdm,Nishry-Wennman,per-vir-acta,shi-zel-imrn}.

\medskip
Now we discuss  the insight and ideas in the proofs of Theorems~\ref{thm-main-speed},~\ref{thm-main-hole}. 
First, 
let us explain more about the functional $\oI_D$, since it 
is a crucial ingredient in our approach.  Such a type of functional was first proposed by Zeitouni-Zelditch \cite{zei-zel-imrn} and Zelditch~\cite{zel-imrn} to study large deviations of random zeros of line bundle sections on compact Riemann surfaces.  Later, Ghosh-Nishry \cite{gho-nis-con,gho-nis-cpam} modified it a bit to study the hole event problem for Gaussian entire functions on $\C$.  Recently, Dinh, Ghosh and the first named author \cite{DGW-hole-event} finalized the functional $\oI_D$ to the current version, which is  near but different to Zeitouni-Zelditch's, and used it to solve the equidistribution problem of hole events, and obtained the exponential decay~\eqref{DGW estimate} of the hole probabilities as a by-product.  The proofs in this article heavily rely on the structure theorem of the equilibrium measure obtained in \cite{DGW-hole-event}.

\smallskip

We shall emphasize that~\eqref{thm 1 estimate} is a strengthen of~\eqref{DGW estimate}. For instance, the lower bound of~\eqref{thm 1 estimate} is equivalent to
\begin{equation}
    \label{lower bound}
\bP_n(H_{n,D})\geq
e^{-C_D n\log n}\cdot
e^{-\min\oI_D \cdot n^2},
\qquad
\forall\, n\gg 1, 
\end{equation}
while~\eqref{DGW estimate}  only means  that
\begin{equation}
    \label{previous lower bound}
\bP_n(H_{n,D})\geq
e^{-\epsilon_n n^2}\cdot
e^{-\min\oI_D \cdot n^2},
\qquad
\forall\, n\gg 1,
\end{equation}
for some sequence  $\{\epsilon_n\}_{n\gg 1}$ 
of positive numbers tending to zero. 

\smallskip
The  proofs of both~\eqref{lower bound} and~\eqref{previous lower bound}
boil down to the
following two considerations:
\begin{itemize}
    \item[$(\heartsuit)$] to construct  holomorphic sections $s\in H^0(X,\oL^n)$ 
whose zeros
avoid the hole $D$;
    
    \smallskip\item[$(\diamondsuit)$] 
to show that such $[s]$ constitute a subset of $\mathbb{P}H^0(X,\oL^n)$ with ``large'' volume.
\end{itemize}
As a matter of fact,~\eqref{lower bound} is much more difficult to prove than~\eqref{previous lower bound}. The method of~\cite{DGW-hole-event}
fails to generate enough $[s]\in H_{n, D}$  for~\eqref{lower bound} concerning~$(\diamondsuit)$. 

\smallskip

In general, constructing ``desired'' holomorphic sections
is widely known to be a notoriously difficult task,
and it
lies in the heart of several prominent problems 
in   algebraic and complex geometry 
(cf.~\cite{MR1989197}). 
Let us keep a broader scope in mind that, most of the time, one appeals to three kinds of classical tools:
\begin{itemize}
\item[(1)] methods
motivated by electrostatic potentials;

\item[(2)] differential geometric methods involving positive curvature;

\item[(3)] $L^2$-methods of solving $\overline{\partial}$-equations.
\end{itemize}
Note that (2) and (3)
work in any dimension, while (1) is useful only in dimension one. 

Our approach is a reminiscence of (1). The starting point is  
Zelditch's density formula~\eqref{Zelditch formula}
(cf.~\cite[Theorem 2]{zel-imrn}), whose proof relies on certain bosonization formula~\cite{MR0908551} and ingenious calculation involving theta functions. 
In fact, the functional $\oI_D$
is obtained by exploring the key exponent in the formula~\eqref{Zelditch formula}. Therefore, 
$\oI_D$ serves as a rate function of a large deviation principle (LDP) for the empirical measures $\llbracket Z_s \rrbracket$ of zeros of
random holomorphic sections $[s]\in H_{n, D}$, and such an idea goes back to Dinh, Ghosh and the first author \cite{DGW-hole-event}. 

\medskip

The insight is that
zeros  of a random section in the hole event $H_{n, D}$
shall  repel each other, like electrons. The zero sets $Z_s$ of most $[s]\in H_{n, D}$
 would produce empirical measures
$\llbracket Z_s \rrbracket$ very close to the minimizer $\nu_D$
of the functional $\oI_D$. This is due to the equidistribution result from \cite[Theorem 2]{DGW-hole-event}.

In practice, the upper bound in Theorem~\ref{thm-main-speed} is relatively easy to obtain, by means of Zelditch's density formula~\eqref{Zelditch formula}. 
The formidable  difficulty lies in the  the lower bound, {\em i.e.}~\eqref{lower bound}.  Our key strategies  concerning $(\heartsuit)$ and $(\diamondsuit)$
consist of two steps.
\begin{itemize}
\item[$(\spadesuit)$]
    First, to seek
some 
$[s]\in H_{n, D}$
 such that 
a certain necessary modification 
of $\oI_D$
takes value ``extremely close'' to $ \oI_D(\nu_D)$
on the empirical measure $\llbracket Z_s \rrbracket$. Here ``necessary'' is due to the trouble that 
$\oI_D(\mu)=+\infty$ for any probability measure $\mu$ charging positive mass at some point.

    \smallskip
    \item[$(\clubsuit)$]
 Next,  to show that,
by  ``arbitrary'' mild perturbations $Z_{s'}\subset X\setminus D$ (but still subject to the Abel-Jacobi equation~\eqref{equal-p+q=L-oA}) of the zero set $Z_s$ of $s$, 
we can recover
lots of $[s']\in H_{n, D}$ which constitute ``large'' volume.
\end{itemize}
The reason for requiring the ``extremely closeness'' in $(\spadesuit)$
is to ensure that
after mild perturbations 
$\llbracket Z_{s'} \rrbracket$ still take the modified-functional values    ``very close'' to $ \oI_D(\nu_D)$
so that such $[s']\in H_{n, D}$
could possibly  constitute
``large'' volume
according to Zelditch's density formula~\eqref{Zelditch formula}.

\smallskip
When $g=0$, such a blueprint can be realized directly, see Remark~\ref{rmk-g=0} below. 
However, when $g\geq 1$, one basic difficulty appears  as follows, as pointed out by Zelditch~\cite{zel-imrn}.  Say $n> g$ and  $\deg(\oL)=1$ for simplicity. Note that the zero number of every  $[s]\in \P H^0(X,\oL^n)$ is presicely $n$, which is strictly larger than the dimension $n-g$ of $\P H^0(X,\oL^n)$. Write the zeros of $s$ in any order as $p_1,\dots,p_{n-g}, q_1,\dots, q_g$. 
For a generic $s$, by Abel's theorem, the last $g$ zeros
 $q_1,\dots,q_g$ are uniquely determined by the first $n-g$ zeros  $p_1,\dots,p_{n-g}$. However, even if all the $p_j$'s are outside the hole $D$, it is not certain whether $q_j$'s are in $X\setminus D$ or not,  {\em i.e.}, there seems no direct way to parameterize   $H_{n, D}$.

\smallskip
Let us highlight three major new ingredients along with the structure of this article. 

\begin{itemize}
    \item 
    We adapt the Fekete points theory on compact Riemann surfaces (Section~\ref{sect: Fekete points of the hole event}), inspired by Nishry-Wennman~\cite{Nishry-Wennman}, so that $(\spadesuit)$ can be achieved for every large $n$ (Section~\ref{sect: Functional values with Fekete points}).

    \smallskip
    \item 
    We show the H\"older regularity of $\nu_D$  (Section~\ref{sect: Regularity of the equilibrium measure}) to guarantee that the zeros of any section $s$ obtained in $(\spadesuit)$
are ``sufficiently separated'' with each other.
    \smallskip
    \item 
We introduce a new perturbation method (Section~\ref{sec-hole event section}) to accomplish $(\clubsuit)$ (Section~\ref{sect: Deviation of the two functionals}), in which the key engine is  Lemma~\ref{key lemma}.
\end{itemize}

\smallskip
In the next  Section~\ref{sec:prelim}, we  will present some useful background. 
The proof of Theorem~\ref{thm-main-speed} will be reached 
 in Section~\ref{sect: Decay of the hole probability}, 
and Theorem~\ref{thm-main-hole} in Section~\ref{sect: Asymptotic behavior of the hole probability}.

\medskip

Lastly, we shall emphasize that our approach for Theorem~\ref{thm-main-speed} is not suitable in higher dimension, because it is based on the Abel-Jacobi theory. The  hole probabilities
in higher dimensional varieties remain
a challenging research direction.  

\medskip

\noindent\textbf{Notations:}
The symbols $\lesssim$ and $\gtrsim$ stand for inequalities up to  positive multiplicative constants, which may depend on $X, \omega_0, \omega, D$ but not on any other parameter. The  notation $O(f)$, where $f$ is some given positive function, means some term taking values less than $ C\cdot f$ for some positive constant $C$    which may  depend on $D$.  We denote  by $\D(x,r)\subset \C$ the open disc centered at $x\in \C$ with radius $r>0$. We abbreviate $\D(0,r)$ as $\D_r$,
and $\D_1$ as $\D$. In a Riemann surface or a complex manifold,  $\B(x,r)$ stands for the open ball centered at $x$ having radius $r$ with respect to some given metric.

\medskip
  
\section{Preliminaries} \label{sec:prelim}

\subsection{Abel-Jacobi theory}
 On  a  compact Riemann surface $X$, a \textit{divisor} is a finite formal linear combination $\oD:=\sum a_j x_j$ with $a_j\in \Z$ and $x_j\in X$. If $a_j\geq 0$ for all $j$, then we call $\oD$ an \textit{effective divisor}. The degree $\deg(\oD)$ of $\oD$ means the total sum $\sum a_j$. For a nonzero meromorphic function $f$ on $X$ (resp.\ a nontrivial holomorphic  section $s$ of $\oL$), we write $(f):=(f)_0-(f)_{\infty}$ (resp.\ $(s)$) the divisor defined by the zeros $(f)_0$ and the poles $(f)_{\infty}$ of $f$ (resp.\ zeros of $s$) respectively. 
Such a divisor $\oD=(f)$ is called a \textit{principal divisor}, and it  always has degree $0$.
A  divisor $\mathcal D$ on $X$ can be associated with a line bundle $\Oc(\mathcal D)$.

\medskip

 Take a canonical basis $\alpha_1,\dots,\alpha_g,\beta_1,\dots,\beta_g$ of $H_1(X,\Z)$ with  the intersection numbers $\alpha_i
\cdot \beta_j=\delta_{ij}$  and $\alpha_{j_1}\cdot \alpha_{j_2}=\beta_{j_1}\cdot \beta_{j_2}=0$ for $j_1\neq j_2$. Let $\{\phi_1,\dots,\phi_g\}$ be a basis for the complex vector space of holomorphic differential $1$-forms on $X$. Denote by $\Phi=(\phi_1, \dots,\phi_g)^{\mathsf{T}}$ a column vector of length $g$. The $g\times 2g$ matrix  
$$\Omega:=\Big(\int_{\alpha_1} \Phi, \,\dots\,,  \int_{\alpha_g} \Phi  , \, \int_{\beta_1}\Phi,\,\dots\, ,\, \int_{\beta_g} \Phi\Big).$$
is called a \textit{period matrix}.
The $2g$ columns of  $\Omega$ are linearly independent over $\R$, hence generate a lattice $\Lambda$ in $\C^g$~(cf.~\cite{PAG,gun-book}). Define the \textit{Jacobian variety} of $X$ by
\begin{equation}
    \label{Jacobian variety}
 \mathrm{Jac}(X):= \C^g/ \Lambda,
 \end{equation}
which is a $g$-dimensional complex torus. Fix a base point $p_\star\in X$. The  \textit{Abel-Jacobi map} $\oA:X \to \mathrm{Jac}(X)$ associated with $p_\star$ is defined by
\begin{equation}
    \label{abel-jacobi map}
\oA(x):=\Big(\int_{p_\star}^x \phi_1,\,\dots\,,\,\int_{p_\star}^x \phi_g \Big) \quad  (\text{mod} \,\,\Lambda),
\qquad
\forall x\in X.
\end{equation}
The last vector  is independent of the choices of path from $p_\star$ to $x$ after taking the modulo. 

\smallskip

Let $X^{(m)}$ be the $m$-th symmetric product of $X$. Points in $X^{(m)}$ one-to-one correspond to effective divisors $p_1+\cdots+p_m$ of degree $m$, where  points $p_1,\dots,p_m\in X$ are not necessarily distinct. It is well known that $X^{(m)}$ inherits a complex structure from $X^n$. Let 
\begin{equation}
    \label{def Rm}
\pi_m: X^m\to X^{(m)},\quad   (p_1,\dots,p_m)\mapsto p_1+\cdots+p_m
\end{equation}
be the canonical projection. Let $\Rc_m:=\big\{(p_1,\dots,p_m):\, p_j=p_\ell \text{ for some } j\neq \ell \big\}$ be the ramification subvariety. Near a point $p_1+\cdots+p_m\notin \pi_m(\Rc_m)$, the holomorphic coordinates of  $X^{(m)}$ are given by the local coordinates of $p_1,\dots, p_m\in X$.

\smallskip

For every $t\geq 1$, define a holomorphic map $\oA_t: X^{(t)}\to \mathrm{Jac}(X)$ by 
$$\oA_t\,(p_1+\cdots+p_t):=\sum_{j=1}^t \oA(p_j)= \sum_{j=1}^t\Big(\int_{p_\star}^{p_j} \phi_1,\,\dots \,,\,\int_{p_\star}^{p_j} \phi_g \Big) \quad  (\text{mod} \,\,\Lambda).$$ 
Then $\oA_1=\oA$ is injective, and $\oA_g$ is surjective by the Jacobi inversion theorem.  
Moreover,  Riemann showed that $\oA_g$ is \textit{birational}, {\em i.e.}, there exist two subvarieties $\W\subsetneq X^{(g)}$ and $W_g^1=\oA_t(\W)\subsetneq \Jac(X)$, both of codimension $2$, such that the restriction
$$\oA_g : X^{(g)} \setminus \W \longrightarrow  \mathrm{Jac}(X)  \setminus   W_g^1$$
is biholomorphic. The subvariety $\W$ consists of critical points of $\oA_g$, and $W_g^1$ is called the \textit{Wirtinger subvariety}. Both of them depend on $p_\star$. For  $q_1+\cdots+q_g\in \W$, one has $\dim  H^0(X,\Oc(q_1+\cdots+q_g))\geq 2$~(cf.~\cite{PAG,gun-book}). When $g=1$,  $\W$ is empty.

Abel's theorem states that  $\oA_t(p_1+\cdots+p_t)=\oA_t(p_1'+\cdots+p_t')$ if and only if  $p_1+\cdots+p_t-p_1'-\cdots-p_t'$ is a principal divisor. In particular, there is a holomorphic section $s\in H^0(X, \Oc(p_1+\cdots+p_t))$ with $(s)=p_1'+\cdots+p_t'$.

By the Riemann-Roch theorem, any line bundle  on $X$ of degree $\geq g$ has a nontrivial global section $s$,
hence
is linearly equivalent to $\Oc(\oD)$ for the effective divisor $\oD=(s)$. For large $n$ such that
$m:=n\cdot\deg(\oL)-g>0$,
for any $p_1+\cdots +p_m\in X^{(m)}$, 
we can apply this observation to the line bundle $\oL^n\otimes \Oc(-(p_1+\cdots +p_m))$ of degree $g$ to see that
   there exists some $q_1+\cdots+q_g\in X^{(g)}$ such that 
\begin{equation}\label{equal-p+q=L}
 \Oc(q_1+\cdots+q_g)\simeq \oL^n\otimes \Oc(-(p_1+\cdots +p_m)). 
\end{equation}
Equivalently, if $\oL^n\simeq \Oc(w_1+\cdots+w_{n\cdot\deg(\oL)})$, then 
\begin{equation}\label{equal-p+q=L-oA}
\oA_m(p_1+\cdots+p_m)+\oA_g(q_1+\cdots+q_g)=\oA_n(w_1+\cdots+w_{n\cdot\deg(\oL)})=:\oA_n(\oL^n).
\end{equation}
 Moreover,
the choice of $q_1+\cdots+q_g$ is unique if $\dim H^0(X,\Oc(q_1+\cdots+q_g))=1$, or equivalently,  $\oA_n(\oL^n)-\oA_m(p_1+\cdots+p_m)  \notin W_g^1$.

\smallskip

In this article, we will just assume $\deg(\oL)=1$, because of the following well-known fact, which is a corollary of the Jacobi inversion theorem.

\begin{lemma}[cf.~\cite{DGW-hole-event}--Lemma 6.4] \label{lem-degree1}
	If $\oL'$ is positive line bundle on $X$ of degree $d>1$, then there exists a positive line bundle $\oL$ of degree $1$ such that $\oL^d \simeq \oL '$.
\end{lemma}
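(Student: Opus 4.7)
My plan is to work entirely inside the Picard group $\mathrm{Pic}(X)=\bigsqcup_{k\in\Z}\mathrm{Pic}^k(X)$, using that $\mathrm{Pic}^0(X)$ is canonically isomorphic to the Jacobian $\Jac(X)=\C^g/\Lambda$ (Abel--Jacobi plus Jacobi inversion). First I would fix an auxiliary degree-$1$ line bundle $\oL_0$, for instance $\oL_0:=\Oc(p_\star)$ for the base point $p_\star$ used in~\eqref{abel-jacobi map}. Then the line bundle
$$N:=\oL'\otimes \oL_0^{-d}$$
has degree $d-d=0$, so $N\in \mathrm{Pic}^0(X)\cong \Jac(X)$.

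Next, the main step: I want to extract a ``$d$-th root'' of $N$ in $\Jac(X)$. Since $\Jac(X)=\C^g/\Lambda$ is a complex torus, the multiplication-by-$d$ homomorphism
$$[d]\colon \Jac(X)\longrightarrow \Jac(X), \qquad M\longmapsto M^{\otimes d},$$
is surjective (the lift $\cdot d \colon \C^g\to \C^g$ is obviously surjective, and descends to $[d]$). In fact it is a finite unramified cover of degree $d^{2g}$, whose kernel is the $d$-torsion subgroup $\Jac(X)[d]\cong (\Z/d\Z)^{2g}$. Hence I can pick $M\in\Jac(X)$ such that $M^{\otimes d}\simeq N$. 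Setting
$$\oL:=\oL_0\otimes M,$$
I immediately get $\deg(\oL)=1+0=1$ and
$$\oL^{\otimes d}\simeq \oL_0^{\otimes d}\otimes M^{\otimes d}\simeq \oL_0^{\otimes d}\otimes N\simeq \oL'.$$

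Finally I need to verify that the $\oL$ I just produced is positive in the sense used in this paper, which, as recorded right after the introduction, simply means $\deg(\oL)>0$; this holds because $\deg(\oL)=1$. (Equivalently, by the Kodaira embedding / standard Riemann surface theory, any line bundle of strictly positive degree on a compact Riemann surface admits a Hermitian metric whose curvature form is a positive $(1,1)$-form.) There is no serious obstacle: the only non-formal input is the surjectivity of $[d]$ on $\Jac(X)$, which is essentially built into the complex-torus structure supplied by the Jacobi inversion theorem. If one preferred a divisor-theoretic argument, one could instead pick any effective divisor $\oD'$ with $\Oc(\oD')\simeq \oL'$, set $E:=\oA_d(\oD')-\oA_d(d\cdot p_\star)\in \Jac(X)$, choose $y\in \Jac(X)$ with $d\cdot y=E$, write $y=\oA(q)$ for some $q\in X$ via Abel--Jacobi, and take $\oL:=\Oc(p_\star+q)$; the same reasoning gives $\oL^{\otimes d}\simeq\oL'$.
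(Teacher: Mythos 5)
Your main argument is correct and is the standard proof of this statement: pass to $\mathrm{Pic}^0(X)\cong\Jac(X)$, use that multiplication by $d$ is surjective on the complex torus $\C^g/\Lambda$, and twist by a fixed degree-$1$ bundle. The paper itself gives no proof (it cites~\cite{DGW-hole-event} and attributes the fact to Jacobi inversion), so there is nothing to compare line-by-line, but this is exactly the intended route; the surjectivity of $[d]$ is, if anything, more elementary than full Jacobi inversion since it only uses the torus structure, not surjectivity of $\oA_g$.

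One caveat about the divisor-theoretic variant you sketch at the end: it does not quite work as written. After choosing $y\in\Jac(X)$ with $d\cdot y=E$, you cannot in general write $y=\oA(q)$ for a single point $q\in X$, because the Abel--Jacobi map $\oA\colon X\to\Jac(X)$ is far from surjective when $g\geq 2$ (its image is a curve in a $g$-dimensional torus). To invert you would need $\oA_g$, producing $g$ points $q_1,\dots,q_g$; but then $\Oc(p_\star+q_1+\cdots+q_g)$ has degree $g+1$, not $1$, and the degree bookkeeping collapses. The correct repair is precisely what your first argument does: interpret $y$ as a degree-zero line bundle and twist, rather than trying to realize $y$ as $\oA$ of a point. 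Since the first argument is complete and self-contained, the lemma is proved; I would simply delete or amend the final parenthetical sketch.
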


\smallskip

\subsection{Quasi-potentials} \label{subsec-potential}

A function $\phi$ on $X$ with values in $\R\cup \{-\infty\}$ is \textit{quasi-subharmonic} if locally it is the difference of a subharmonic function and a smooth one.   If $\phi$ is quasi-subharmonic, then there exists a constant $c\geq 0$ such that $\ddc \phi\geq -c\, \omega$ in the sense of currents. When $c=1$, $\phi$ is called an \textit{$\omega$-subharmonic function}, 
and
$\ddc \phi +\omega$ is a probability measure on $X$ by Stoke's formula.

For any probability measure $\mu$ on $X$, we can write $\mu=\omega+\ddc U_\mu$, where $U_\mu$ is the unique quasi-subharmonic function such that $\max U_\mu=0$. We call $U_\mu$ the \textit{$\omega$-potential of type M} of $\mu$ (M stands for ``maximum''). There is an alternative way to normalize the potential $U'_\mu$ by requiring that $\int_X U'_\nu \,\omega =0$. We call $U'_\mu$ the \textit{$\omega$-potential of type I} of $\mu$ (I stands for ``integration'').
To construct $\omega$-potentials, we use the theory of Green functions.

\begin{lemma} [cf.~\cite{Lang}--Theorem 2.2, \cite{DGW-hole-event}--Lemma 2.1]\label{l:Green}
There exists a \textit{Green function} $G(x,y)$ of $(X,\omega)$ defined on $X\times X$, smooth outside the diagonal, such that for every $x\in X$, one has
$$\int_X G(x,\cdot) \, \omega = 0 \qquad \text{and} \qquad \ddc G(x,\cdot) =\delta_x -\omega,$$
 Moreover, the function $\varrho(x,y):=G(x,y)-\log \dist(x,y)$ is Lipschitz in two variables $x, y$.  
\end{lemma}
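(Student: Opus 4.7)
The plan is to construct $G$ by correcting an explicit local logarithmic model with a globally smooth Hodge-theoretic term on $(X,\omega)$. First I would cover $X$ by finitely many coordinate charts $(U_i,z_i)$, choose smooth cutoffs $\chi_i\in C^\infty_c(U_i)$ with $\sum_i\chi_i^2\equiv 1$, and set
\[
\Psi(x,y):=\sum_i \chi_i(x)\chi_i(y)\,\log|z_i(x)-z_i(y)|.
\]
This function is smooth off the diagonal of $X\times X$, and a direct chart-wise computation using $\ddc_w\log|z-w|=\delta_z$ together with the partition-of-unity identity $\sum_i\chi_i(x)^2=1$ shows that the current $\alpha(x,\cdot):=\ddc_y\Psi(x,y)-\delta_x$ extends to a jointly smooth $(1,1)$-form on $X\times X$.

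Next I would produce the smooth correction. For each fixed $x$, the form $\omega-\alpha(x,\cdot)$ on $X$ is smooth and has total mass $1-1=0$, so Hodge theory on the compact K\"ahler surface $(X,\omega_0)$ yields a unique smooth solution $\eta(x,\cdot)$ of $\ddc_y\eta(x,y)=\omega-\alpha(x,\cdot)$ normalized by $\int_X \eta(x,\cdot)\,\omega_0=0$. Defining
\[
G(x,y):=\Psi(x,y)+\eta(x,y)-c(x),
\]
where $c(x)$ is chosen so that $\int_X G(x,\cdot)\,\omega=0$, produces the desired Green function: the identity $\ddc_y G(x,\cdot)=\delta_x-\omega$ and smoothness off the diagonal are built in.

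For the regularity of $\varrho(x,y)=G(x,y)-\log\dist(x,y)$, I would use that in each coordinate chart $\dist(x,y)^2=|z_i(x)-z_i(y)|^2\,h_i(x,y)$ with $h_i$ smooth and positive (Gauss-type expansion for Riemannian $2$-manifolds). Hence $\log\dist(x,y)-\log|z_i(x)-z_i(y)|$ is smooth in $(x,y)$ across the diagonal, and combined with the previous step this makes $\varrho$ smooth, hence in particular Lipschitz, on all of $X\times X$, which is strictly stronger than the Lipschitz claim of the lemma.

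The main obstacle is ensuring that the Hodge correction $\eta(x,y)$ depends smoothly on \emph{both} variables simultaneously, since fiberwise solvability alone does not immediately deliver joint smoothness. This is handled by standard elliptic regularity for the smoothly parametrized family $\{\ddc_y\eta(x,\cdot)=\omega-\alpha(x,\cdot)\}_{x\in X}$, or equivalently by smooth parameter-dependence of the Schwartz kernel of the Green operator of $\Delta_{\omega_0}$; some care is needed to avoid circularity with the very statement being proved, so one works throughout from the fixed background Laplacian $\Delta_{\omega_0}$ rather than quoting the kernel we are trying to construct.
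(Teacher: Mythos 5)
The paper does not prove this lemma; it is quoted verbatim from Lang's book and from~\cite{DGW-hole-event}, so there is no in-paper proof to compare against. Your strategy (an explicit logarithmic parametrix plus a global $\ddc$-Hodge-theoretic correction, with elliptic regularity for joint smoothness in the parameter) is the standard one and is essentially correct in outline, but two specific points need repair.

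First, a sign error. With $\alpha(x,\cdot)=\ddc_y\Psi(x,\cdot)-\delta_x$, Stokes gives $\int_X\alpha(x,\cdot)=-1$, not $+1$. To obtain $\ddc_y G=\delta_x-\omega$ from $G=\Psi+\eta-c(x)$ one must solve $\ddc_y\eta=-\omega-\alpha(x,\cdot)$, whose right-hand side has total mass $-1-(-1)=0$ and is therefore $\ddc$-exact; your equation $\ddc_y\eta=\omega-\alpha$ has total mass $2$ and is not solvable, and would in any case produce $\ddc_y G=\delta_x+\omega$.

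Second, the claim that $\alpha$ extends to a jointly \emph{smooth} form on $X\times X$ is false for the parametrix $\Psi(x,y)=\sum_i\chi_i(x)\chi_i(y)\log|z_i(x)-z_i(y)|$. Differentiating the cut-offs produces terms of the form $\chi_i(x)\,\log|z_i(x)-z_i(y)|\,\ddc_y\chi_i(y)$ and cross-terms $\chi_i(x)\,\partial_y\chi_i\wedge\bar\partial_y\log|z_i(x)-z_i(y)|$ which survive in $\alpha$. The identity $\sum_i\chi_i\,\mathrm{d}\chi_i=\tfrac12\,\mathrm{d}\big(\sum_i\chi_i^2\big)=0$ kills the leading coefficient on the diagonal, so after cancellation $\alpha$ has singularity of size $O\big(\dist(x,y)\,\log\dist(x,y)\big)$: it is H\"older of every exponent $<1$, but not Lipschitz and certainly not smooth. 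Consequently $\eta$, and hence $\varrho=G-\log\dist$, is (by Schauder) $C^{2,\gamma}$ across the diagonal for all $\gamma<1$ — comfortably Lipschitz, so the lemma's conclusion still holds — but your assertion that $\varrho$ is smooth, and that this is ``strictly stronger'' than what is needed, is not justified by this construction. If you want actual smoothness of $\varrho$, replace $\Psi$ by a diagonal-localized parametrix $\Psi(x,y):=\chi(x,y)\log\dist(x,y)$ with $\chi\equiv 1$ on a neighbourhood of $\Delta\subset X\times X$ and compactly supported near $\Delta$: then $\ddc_y\chi$ vanishes near $\Delta$, $\dist(x,y)^2=|z(x)-z(y)|^2\,h(x,y)$ locally with $h$ smooth positive, and $\alpha(x,y)=\tfrac12\chi(x,y)\,\ddc_y\log h(x,y)+(\text{smooth terms supported away from }\Delta)$ is genuinely smooth on $X\times X$.
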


For a probability measure $\mu$ on $X$, the aforementioned $\omega$-potentials are given by
\begin{equation}\label{defn-potentil-type-I}
U'_\mu(x):=\int_X G(x,\cdot) \, \dd\mu  \quad\text{and}\quad   U_\mu:=U'_\mu-\max U'_\mu.
\end{equation}

In fact, one can define the $\omega$-potential $U'_{\nu}$ for any positive measure or sign measure $\mu$ on $X$ by the above formula, which will be used in the study of Fekete points.
For any two  positive measures or sign measures $\mu_1,\mu_2$ on $X$, it is clear that $U'_{\mu_1+\mu_2}=U'_{\mu_1}+U'_{\mu_2}$. Moreover, one can show by Stoke's formula that (cf.~\cite[Lemma 2.3]{DGW-hole-event})
\begin{equation}\label{commute-potential}
 \int_X U'_{\mu_1}\,\dd \mu_2=\int_X U'_{\mu_2} \,\dd  \mu_1,\quad
 \text{and}\quad \int_X U'_{\mu_1}\,\dd \mu_1 \leq 0.
\end{equation}

There is another geometric description for the $\omega$-potentials. For every $p\in X$, the holomorphic line bundle $\Oc(p)$ associated with  the one-point divisor $p$ has a canonical section $\mathbf 1_{\Oc(p)}$. 
By the Poincar\'e-Lelong formula and by the $\partial\overline{\partial}$-lemma, we can find  a smooth Hermitian metric $\fh_p$  for every $p$ such that the curvature form is $\omega$ and such that the aforementioned potential reads as 
\begin{equation*}
    \label{geomtric meaning of potential U}
U_{\delta_p}(z)= \log\big\|\mathbf 1_{\Oc(p)}(z) \big\|_{\fh_p}+C_{\fh_p}
\end{equation*}
where $C_{\fh_ p}$ is a normalized constant
subject to the law
$\max {U_{\delta_p}} =0$. 

For every point $\mathbf p:=p_1+\cdots+p_m\in X^{(m)}, m\geq 1$, we denote by \begin{equation}
\label{define delta_p}
\delta_{\mathbf p}
:=
(\delta_{p_1}+\cdots+\delta_{p_m})/m
\end{equation}
the empirical probability measure of $\mathbf{p}$. 
Similarly, we have 
\begin{equation}\label{formula-potential-point}
U_{\delta_{\mathbf p}}(z)={1\over m}\log \big \| {
\mathbf 1_{\Oc(\mathbf p)}(z) \big\|_{\fh_{\mathbf p}}
} +C_{\fh_{\mathbf p}}, 
\end{equation}
where $ \mathbf 1_{\Oc(\mathbf p)}:= \otimes_{j=1}^m  \mathbf 1_{\Oc(p_j)}$, $\fh_{\mathbf p}:=\otimes_{j=1}^m {\fh}_{p_j}$, and $C_{\fh_{\mathbf p}}$ is a normalized constant
subject to the law $\max  U_{\delta_{\mathbf{p}}} =0$.

\medskip

\subsection{Density of zeros}
From now on, we fix a positive line bundle $\oL$ on a compact Riemann surface $X$ with degree $1$. Let $n>g$ be an integer. Set  $m:=n-g$. Define the \textit{exceptional subvariety}
$$\bH_m:=\big \{ p_1+\cdots+p_m\in X^{(m)}:\, \oA_n(\oL^n)-\oA_m(p_1,\dots,p_m)    \in W_g^1 \big\}$$ of $X^{(m)}$ with codimension $2$. 
Outside $\bH_m$,  we have a holomorphic map 
$$\oB_m:X^{(m)}\setminus \bH_m \longrightarrow  X^{(g)}$$
given by  $\oB_m(p_1+\cdots+p_m):=q_1+\cdots+q_g$ in~\eqref{equal-p+q=L} or \eqref{equal-p+q=L-oA}. The image of $\oB_m$ is  $ X^{(g)} \setminus \W$.
Consequently, we obtain a holomorphic map
$$\Ac _m:X^{(m)}\setminus \bH_m  \longrightarrow   \P H^0(X,\oL^n),$$
defined by 
$$\Ac _m (p_1+\cdots+p_m):=[s],$$
where $s\in H^0(X,\oL^n)$ satisfies $(s)= p_1+\cdots+p_m+\oB_m( p_1+\cdots+p_m)$.
The map $\Ac_m$ is generically $\binom{n}{m}$ to one. For a measurable set $A\subset \mathcal M (X)$, if we set
$$A_\P:=  \big\{[s]\in\P H^0(X,\oL^n):\, \llbracket Z_s \rrbracket \in A      \big\},$$
then
\begin{equation}\label{defn-Pn}
\bP_n(A_\P)=\int_{A_\P}\dd V_n^{\FS} =\binom {n}{m}^{-1} \int_{\Ac_m^{-1}(A_\P)}  \Ac_m^*(V_n^{\FS}).
\end{equation}

\medskip

Write $\mathbf q:=\oB_m(\mathbf p)=q_1+\cdots+q_g\in X^{(g)}$. We define
\begin{equation}
    \label{define E(P)}
\oE_m(\mathbf p):={1\over m^2} \sum_{j_1\neq j_2} G(p_{j_1},p_{j_2})
\quad\text{and} \quad
\oF_m(\mathbf p):=\log\big\|  e^{U'_{\delta_{\mathbf p}}+m^{-1}gU'_{\delta_{\mathbf q}}}       \big\|_{L^{2m}(\omega_0)}.
\end{equation}
Here, $G$ is the Green function of $(X, \omega)$, and $U'_{\delta_{\mathbf p}},U'_{\delta_{\mathbf q}}$ are  the $\omega$-potentials of type I. 

\smallskip
Zelditch  \cite[Theorem 2]{zel-imrn} established the following explicit formula for $\Ac_m^*(V_n^\FS)$ (see also \cite[Section 4]{DGW-hole-event}).

\begin{proposition}\label{prop-formula}
For all $n\geq 2g$ and $m=n-g$,
	on $X^{(m)}\setminus \bH_m$,	the positive measures $\Ac _m^*(V_n^{\FS})$  are of the shape
 \begin{equation}
     \label{Zelditch formula}
	C_n  \exp\Big[m^2\Big(\oE_m(\mathbf p)- {2(m+1)\over m} \oF_m(\mathbf p)  \Big) \Big] \kappa_n(\mathbf p).
  \end{equation}
	Here,  $\{C_n\}$ are some normalized positive constants,  and $\kappa_n$ are positive continuous $(m,m)$-forms on $X^{(m)}\setminus \bH_m$ given by
	\begin{align*}
	\kappa_n(\mathbf p):= \Upsilon (\mathbf p) \xi(\mathbf q) \prod_{j=1}^m\prod_{l=1}^g\dist(p_j,q_l)^2   \, i\dd z_1\wedge \dd\overline z_1 \wedge\cdots \wedge i\dd z_m\wedge \dd\overline z_m,
	\end{align*} 
	  where  $e^{-O(m)}\leq\Upsilon (\mathbf p) \leq e^{O(m)}$, and $\xi(\mathbf q)$ is some  strictly positive smooth function on $X^{(g)}$ independent of $n$.
\end{proposition}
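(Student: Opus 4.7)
The plan is to compute $\Ac_m^*(V_n^{\FS})$ explicitly by parametrizing generic elements of $\P H^0(X,\oL^n)$ through their zero divisors. Given $\mathbf p\in X^{(m)}\setminus\bH_m$ and $\mathbf q=\oB_m(\mathbf p)$, Abel's theorem produces a section $s_{\mathbf p}\in H^0(X,\oL^n)$ (unique up to scalar) with $(s_{\mathbf p})=\mathbf p+\mathbf q$, identified via $\oL^n\simeq\Oc(\mathbf p+\mathbf q)$ with the canonical section $\mathbf 1_{\Oc(\mathbf p)}\otimes\mathbf 1_{\Oc(\mathbf q)}$. For any local holomorphic lift of $\Ac_m$ into $H^0(X,\oL^n)\setminus\{0\}$ one has
\begin{equation*}
\Ac_m^*(V_n^{\FS}) \;=\; \frac{c_m}{m!}\,\big(\ddc \log \|s_{\mathbf p}\|_n^2\big)^{\wedge m},
\end{equation*}
where $c_m$ is the total FS-volume of $\P^m$ (eventually absorbed into $C_n$). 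Thus the task reduces to evaluating this $m$-fold wedge power in divisor coordinates on $X^{(m)}\setminus\bH_m$.

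For the norm factor, formula~\eqref{formula-potential-point} gives $\tfrac{1}{2}\log\|s_{\mathbf p}(z)\|_{\fh_n}^2 = mU'_{\delta_{\mathbf p}}(z)+gU'_{\delta_{\mathbf q}}(z) + \tau(z,\mathbf p)$, with $\tau$ a smooth $O(m)$ correction coming from the normalizing constants of $\fh_{\mathbf p},\fh_{\mathbf q}$ and from the trivialization $\oL^n\simeq\Oc(\mathbf p+\mathbf q)$. Integrating against $\omega_0$ packages $\|s_{\mathbf p}\|_n^2 = e^{2m\,\oF_m(\mathbf p)}\cdot e^{O(m)}$ directly via the $L^{2m}$-structure built into $\oF_m$. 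Next, I would choose an $\lp\cdot,\cdot\rp_n$-orthonormal basis $\{s_\alpha\}$ of $H^0(X,\oL^n)$, write $s_{\mathbf p}=\sum_\alpha c_\alpha(\mathbf p)s_\alpha$, and identify
\begin{equation*}
\Ac_m^*(V_n^{\FS}) \;=\; c_m\,\big|\det(\partial c_\alpha/\partial p_j)\big|^2\,\|s_{\mathbf p}\|_n^{-2(m+1)}\,(i\,dp_1\wedge d\bar p_1\wedge\cdots\wedge i\,dp_m\wedge d\bar p_m),
\end{equation*}
where the exponent $-2(m+1) = -2\dim_\C H^0(X,\oL^n)$ is precisely what packages as $\exp[-2m(m+1)\,\oF_m(\mathbf p)]$ in~\eqref{Zelditch formula}. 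The determinant $|\det(\partial c_\alpha/\partial p_j)|^2$ is classically evaluated via the \emph{bosonization identity} on Riemann surfaces, which expresses it through prime forms, theta functions, and Abel--Jacobi images; combined with Lemma~\ref{l:Green}'s expansion $G=\log\dist+\varrho$, it decomposes into a Vandermonde-type contribution $\prod_{j<k}\exp(2G(p_j,p_k))=\exp(m^2\oE_m(\mathbf p))$, the mixed factor $\prod_{j,l}\dist(p_j,q_l)^2$ appearing in $\kappa_n$, a theta-function $\mathbf q$-contribution supplying the strictly positive smooth $\xi(\mathbf q)$ (nondegenerate off $\W$ thanks to the biholomorphism of $\oA_g$), and bounded smooth residuals absorbed into $\Upsilon(\mathbf p)$.

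The main obstacle is the uniform bound $e^{-O(m)}\leq\Upsilon(\mathbf p)\leq e^{O(m)}$, because naively the $\tau$-correction inside $\|s_{\mathbf p}\|_n^{-2(m+1)}$ contributes $O(m^2)$ to the exponent, and the smooth residuals $\varrho(p_j,p_k)$ from $\binom{m}{2}$ pairs also accumulate to $O(m^2)$. Two cancellations rescue this: (a) the scalar normalization of $s_{\mathbf p}$ is irrelevant for the projective image $[s_{\mathbf p}]$, so the $\tau$-contribution must cancel against counter-terms produced by $|\det(\partial c_\alpha/\partial p_j)|^2$, leaving only a genuinely single-point residual of size $O(m)$; (b) the pairwise $\varrho$-residuals are already fully packaged inside $\exp(m^2\oE_m)$, since $\oE_m$ invokes the entire Green function rather than its $\log\dist$-piece alone, so only single-point corrections of size $O(m)$ remain for $\Upsilon$. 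Once these cancellations are carried out, the density formula~\eqref{Zelditch formula} follows by direct identification, and the positivity, continuity, and $n$-independence of $\xi(\mathbf q)$ are immediate from the biholomorphism $\oA_g\colon X^{(g)}\setminus\W\to\Jac(X)\setminus W_g^1$.
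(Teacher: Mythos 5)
Note first that the paper does not prove this proposition: it is imported verbatim from Zelditch \cite[Theorem 2]{zel-imrn} (together with the reformulation in \cite[Section 4]{DGW-hole-event}), and the introduction attributes the underlying calculation to the bosonization formula of \cite{MR0908551} plus theta-function identities. Your sketch does correctly reconstruct the skeleton of that argument: pull back $V_n^{\FS}$ through a local holomorphic lift of $\Ac_m$, recognize the factor $\|s_{\mathbf p}\|_n^{-2\dim H^0(X,\oL^n)}$ as the source of $\exp[-2m(m+1)\oF_m]$, package the Green-function pair sums into $\exp[m^2\oE_m]$, and attribute the $\prod_{j,l}\dist(p_j,q_l)^2$ factor together with the smooth $\xi(\mathbf q)$ to the prime-form and theta content of the bosonization determinant. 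Since the paper leans on the citation, no independent proof is required, and your outline is faithful to what \cite{zel-imrn} does.

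Two points nonetheless need repair. First, the pull-back formula you display is misstated: $(\partial c_\alpha/\partial p_j)$ is an $(m+1)\times m$ matrix, so its determinant is undefined. The correct object is the $(m+1)\times(m+1)$ augmented Jacobian whose first row is $(c_0,\dots,c_m)$ and whose remaining $m$ rows are $(\partial_{p_j}c_0,\dots,\partial_{p_j}c_m)$, so that
\[
\Ac_m^*(V_n^{\FS})\;=\;c_m\,\|s_{\mathbf p}\|_n^{-2(m+1)}\,\big|\det(c_\alpha;\,\partial_{p_j}c_\alpha)\big|^2\,\prod_{j=1}^m i\,dp_j\wedge d\bar p_j.
\]
This augmentation is precisely what makes the expression invariant under rescaling the lift, and it is the mechanism behind your proposed cancellation (a); without it the formula is not well defined on the projective target. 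Second, your items (a)--(b) identify cancellations that \emph{must} occur if the final form is to be correct, but they do not establish them: showing that the bosonization determinant, once $\exp[m^2\oE_m]$ and $\exp[-2m(m+1)\oF_m]$ are peeled off, leaves only a $\prod_{j,l}\dist(p_j,q_l)^2$ factor times a residue trapped between $e^{-O(m)}$ and $e^{O(m)}$ is precisely the nontrivial content of Zelditch's theorem. As written, your argument is a plausible reading of the cited source, not a self-contained derivation, and should be labeled as such.
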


Let us explain the above formula. If $\mathbf p\notin \pi_m(\Rc_m) \cup \bH_m$ (see~\eqref{def Rm}),  we can use the uniformizing coordinates $z_j$ about $p_j$ for $j=1, \dots, m$ to construct coordinates  $(z_1, \dots, z_m)$ for $\mathbf p\in X^{(m)}$. For $\mathbf p \in  \pi_m(\Rc_m)\setminus \bH_m$, observing that $\oE_m(\mathbf p)=-\infty$, we set $\Ac_m(V_n^\FS)$ to be $0$ on $\pi_m(\Rc_m)\setminus \bH_m$. 

\smallskip

The following lemma says hole probability does not quite involve with the choice of  $\omega_0$.

\begin{lemma} [\cite{DGW-hole-event}--Lemma 4.2]\label{lem-f-m-p}
	One has
	$$\sup_{\mathbf p \in X^{(m)}}\big| \oF_m(\mathbf p)- \max U'_{\delta_{\mathbf p}}\big|=O\Big({\log m\over m} \Big)  \quad \text{as}\quad m\to \infty,$$
	and $U'_{\delta_{\mathbf p}} \leq C$ for some constant $C$ independent of $m$ and $\mathbf p$.
\end{lemma}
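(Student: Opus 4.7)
The plan is to establish the two assertions separately. The pointwise upper bound $U'_{\delta_{\mathbf p}}\le C$ follows immediately from the decomposition $G(z,y)=\log\dist(z,y)+\varrho(z,y)$ of Lemma~\ref{l:Green}: since $\log\dist(z,y)\le\log\diam X$ and $\varrho$ is Lipschitz, hence bounded on $X\times X$, averaging over the points of $\mathbf{p}$ gives the claim with $C:=\log\diam X+\|\varrho\|_{\infty}$, and the same argument bounds $U'_{\delta_{\mathbf q}}$ uniformly.

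For the asymptotic $|\oF_m-\max U'_{\delta_{\mathbf p}}|=O(\log m/m)$, the main idea is to treat the integral $e^{2m\oF_m(\mathbf p)}=\int_X e^{2m\Psi}\,\omega_0$ as essentially concentrated near the maximiser of $\Psi:=U'_{\delta_{\mathbf p}}+(g/m)\,U'_{\delta_{\mathbf q}}$, which is $(1+g/m)\omega$-subharmonic because $\ddc U'_\mu+\omega=\mu\ge 0$ for any probability measure $\mu$. The upper bound is immediate: using $U'_{\delta_{\mathbf q}}\le C$ yields $\max\Psi\le M_{\mathbf p}+gC/m$ with $M_{\mathbf p}:=\max U'_{\delta_{\mathbf p}}$, so $\oF_m\le\max\Psi+\tfrac{1}{2m}\log\volume(X)\le M_{\mathbf p}+O(1/m)$.

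For the lower bound I would proceed in two steps. \textbf{Step 1} produces a well-placed test point: let $z^*$ maximise $U'_{\delta_{\mathbf p}}$ and set $r:=(\log m)/m$. The sub-mean inequality for $\omega$-subharmonic functions gives $\mathrm{avg}_{\B(z^*,r)}U'_{\delta_{\mathbf p}}\ge M_{\mathbf p}-O(r^2)$, with error constant depending only on $\omega$. A Chebyshev-type argument, combined with $U'_{\delta_{\mathbf p}}\le M_{\mathbf p}$, then shows that at least half of $\B(z^*,r)$ satisfies $U'_{\delta_{\mathbf p}}\ge M_{\mathbf p}-O(r^2)$. Since the $1/m$-neighbourhood of the $g$ points of $\mathbf q$ has area $\lesssim g/m^2=o(r^2)$, one can pick $z^{**}$ in this half-ball with the additional property $\dist(z^{**},q_l)\ge 1/m$ for every $l$; the Green function bound then forces $U'_{\delta_{\mathbf q}}(z^{**})\ge -\log m-O(1)$, and adding the two estimates,
\[
\Psi(z^{**})\ge M_{\mathbf p}-O(\log^2 m/m^2)-(g/m)\bigl(\log m+O(1)\bigr)=M_{\mathbf p}-O(\log m/m).
\]
\textbf{Step 2} applies the sub-mean inequality to $\Psi$ on $\B(z^{**},s)$ with $s:=1/\sqrt m$, giving $\mathrm{avg}_{\B(z^{**},s)}\Psi\ge\Psi(z^{**})-O(1/m)$; Jensen's inequality for the convex exponential then yields
\[
\int_{\B(z^{**},s)}e^{2m\Psi}\,\omega_0\gtrsim s^2\,e^{2m\Psi(z^{**})-O(ms^2)}\gtrsim m^{-1}\,e^{2m\Psi(z^{**})}.
\]
Taking $\tfrac{1}{2m}\log$ and combining with Step 1 gives $\oF_m\ge M_{\mathbf p}-O(\log m/m)$.

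The subtle point is the shift performed in Step 1: a priori the maximiser $z^*$ of $U'_{\delta_{\mathbf p}}$ could coincide with or be extremely close to some $q_l$, making $\Psi(z^*)$ very negative, and there seems to be no direct way to predict where the $q_l$'s sit relative to $z^*$. The choice $r=(\log m)/m$ is precisely the smallest scale at which the excluded $1/m$-neighbourhood of $\mathbf q$ occupies a negligible fraction of $\B(z^*,r)$; any smaller $r$ would fail the pigeonhole step, while any larger $r$ would blow up the $O(r^2)$ error in the sub-mean inequality and ruin the target $O(\log m/m)$ bound.
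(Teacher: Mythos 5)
The paper itself offers no proof of this lemma---it is imported verbatim from \cite{DGW-hole-event}, Lemma~4.2---so there is no internal argument to compare against. On its own terms, your proposal is a correct and self-contained proof, and it follows the natural route one would expect for such an estimate: bound $\oF_m$ from above by $\max\Psi+\tfrac{1}{2m}\log\volume(X)$ using $\|\cdot\|_{L^{2m}}\le\volume(X)^{1/2m}\|\cdot\|_{L^\infty}$ together with $U'_{\delta_{\mathbf q}}\le C$; bound it from below by concentrating on a small ball where $\Psi$ is nearly maximal, using the $\omega$-subharmonicity of $U'_{\delta_{\mathbf p}}$ (sub-mean value plus a Chebyshev/pigeonhole step) to find a test point $z^{**}$ that both nearly maximizes $U'_{\delta_{\mathbf p}}$ and stays at distance $\ge 1/m$ from all of $\mathbf q$, and then applying Jensen on a ball of radius $1/\sqrt m$. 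The identification of the real obstacle---that the maximizer $z^*$ of $U'_{\delta_{\mathbf p}}$ could sit on top of a $q_l$, making $\Psi(z^*)$ arbitrarily negative---and the area-comparison pigeonhole that circumvents it is exactly the right device. Your first assertion, $U'_{\delta_{\mathbf p}}\le\log\diam X+\|\varrho\|_\infty$ from Lemma~\ref{l:Green}, is also correct.

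One small inaccuracy worth flagging: the closing remark that $r=(\log m)/m$ is ``precisely the smallest scale'' that works overstates the rigidity of the choice. The pigeonhole step only needs $r\gg 1/m$ (so that the excluded $1/m$-neighbourhoods of $\mathbf q$ occupy a vanishing fraction of $\B(z^*,r)$), and the sub-mean error only needs $r^2=o(\log m/m)$; any $r$ with $1/m\ll r\ll\sqrt{\log m/m}$ works, since the dominant $O(\log m/m)$ losses come from the $(g/m)\cdot\log m$ term in Step~1 and the $\tfrac{1}{2m}\log(1/s^2)$ term in Step~2, not from the $O(r^2)$ error. This does not affect the validity of the proof.
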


To apply Zelditch's density formula~\eqref{Zelditch formula}  to study hole probabilities, we need to compute its integration over the set $\Ac_m^{-1} (H_{n,D})$. Thus, we put $\oR_{m,D}:=\Ac_m^{-1} (H_{n,D})$, which
can be rephrased as
\begin{equation*}\label{defn-oR-m}
\oR_{m,D}=\big\{ \mathbf p\in (X\setminus  D)^{(m)}\setminus \bH_m: \, \mathbf q=\oB_m(\mathbf p)\in (X\setminus  D)^{(g)} \big\}.
\end{equation*}
 We also define the following separating subset of $\oR_{m,D}$:
\begin{align}\label{defn-oS-m}
\oS_{m,D}:= \big\{\mathbf p\in \oR_{m,D}: \,   \dist &(p_{j_1},p_{j_2})\geq m^{-4}    \,\text{ for all }\, 1\leq j_1\neq j_2\leq m,\\
 &\dist(p_j,q_l)\geq 1 /  m  \, \text{ for all } \, 1\leq j\leq m, 1\leq l\leq g \big\}, \nonumber
\end{align}
where, $\mathbf p:=p_1+\cdots+p_m$ and $\mathbf q:=q_1+\cdots+q_g$. This $\oS_{m,D}$ will play an important role in the proof of Theorem \ref{thm-main-speed}.

\medskip

\section{Regularity of the equilibrium measures}
\label{sect: Regularity of the equilibrium measure}

From \cite[Theorem 1]{DGW-hole-event}, we know that  the minimizer of $\oI_D$ is $\nu_D:= \omega|_{S_D} + \nu_{\partial  D}$, where $S_D:= \{ U_{\nu_D}=0\}\setminus \overline  D$ and $\nu_{\partial  D}$ is a non-vanishing positive measure on $\partial D$. Moreover, if $\nu_D$ has mass on some component of $\partial D$, then $\supp(\nu_D)$ contains this component and $U_{\nu_D}$ is strictly negative on this component of $\partial D$.

In this section, our goal is   to prove the following regularity of $\nu_{\partial D}$, which is used for studying the Fekete points in the next section.

\begin{proposition}\label{prop-regularity}
For any arc $A$ on $\partial D$,
one has $\nu_{\partial  D}(A)\leq c_D \cdot \length (A)$ for some $c_D>0$ independent of $A$.
\end{proposition}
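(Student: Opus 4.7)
The plan is to show that $\nu_{\partial D}$ is absolutely continuous with respect to arc length on each component $\gamma \subset \partial D$ carrying mass, with bounded density. Fix such a component $\gamma$; by the structure theorem recalled above, $U_{\nu_D}$ is strictly negative on $\gamma$, hence by compactness $\sup_\gamma U_{\nu_D} < 0$, so the argmax $E := \{U_{\nu_D} = 0\}$ of $U'_{\nu_D}$ (which contains $\overline{S_D}$) lies at positive distance from $\gamma$. In particular, the Green function $G(x, y)$ is jointly smooth on $E \times \gamma$.

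The first and hardest task is to derive $C^{1,\alpha}$ regularity of the boundary data $g := U_{\nu_D}|_\gamma$ from the first-order optimality of $\nu_D$. The idea is: for any signed measure $\sigma$ supported on $\gamma$ with $\sigma(X) = 0$ and $\nu_D + t\sigma \ge 0$ for small $|t|$, one-sided Gateaux derivatives of $\oI_D$ at the minimizer $\nu_D$ must both be nonnegative, yielding the variational inequality
\[
\min_{x \in E} U'_\sigma(x) \;\le\; \int_\gamma U'_{\nu_D}\, d\sigma \;\le\; \max_{x \in E} U'_\sigma(x).
\]
If $\sigma$ is supported on a sub-arc of diameter $\ell$ and has vanishing mass and vanishing first moment in local coordinates, a second-order Taylor expansion of $G(x, \cdot)$ (smooth uniformly for $x \in E$ since $d(E, \gamma) > 0$) gives $\max_{x \in E}|U'_\sigma(x)| \le C \ell^2 |\sigma|(X)$, hence $|\int_\gamma U'_{\nu_D}\, d\sigma| \le C \ell^2 |\sigma|(X)$. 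I would then specialize to admissible perturbations $\sigma = \eta\, \nu_{\partial D}|_\gamma$ with $\eta \in L^\infty$ satisfying the two moment conditions, and apply Lebesgue differentiation for the Radon measure $\nu_{\partial D}$ (valid $\nu_{\partial D}$-a.e.\ by the Besicovitch covering theorem) together with the structural fact $\supp(\nu_{\partial D}|_\gamma) = \gamma$, to convert the dual estimate into a pointwise $C^{1,\alpha}$ bound on $g$.

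Once $g \in C^{1,\alpha}(\gamma)$, the argument concludes by elliptic regularity. On each side of $\gamma$ -- namely inside $D$ and in the adjacent open region $F := \{U_{\nu_D} < 0\} \cap (X \setminus \overline{D})$ -- the function $U_{\nu_D}$ solves $\ddc U_{\nu_D} = -\omega$ with smooth right-hand side and $C^{1,\alpha}$ Dirichlet data $g$ on $\gamma$. Standard Schauder estimates give $U_{\nu_D} \in C^{1,\alpha}$ up to $\gamma$ from both sides, so the one-sided normal derivatives $\partial_n^\pm U_{\nu_D}$ are bounded on $\gamma$. The jump formula for the distributional identity $\ddc U_{\nu_D} = \nu_D - \omega$ across the smooth curve $\gamma$ then shows that the density of $\nu_{\partial D}|_\gamma$ with respect to arc length is, up to a positive constant, $\partial_n^- U_{\nu_D} - \partial_n^+ U_{\nu_D}$, hence bounded. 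Integrating over $A$ and summing over the finitely many boundary components yields $\nu_{\partial D}(A) \le c_D \cdot \mathrm{length}(A)$. The main obstacle is the regularity bootstrap in the second paragraph: promoting the $L^1$-type dual inequality to a pointwise $C^{1,\alpha}$ estimate for $g$ hinges on the full-support property of $\nu_{\partial D}|_\gamma$ and a careful Vitali/Besicovitch-type differentiation argument for the (a priori only Radon) measure $\nu_{\partial D}$.
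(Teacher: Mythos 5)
Your proposal is correct in outline but follows a genuinely different route from the paper's. The paper argues by contradiction: if the density were unbounded, a Vitali covering argument produces a point $z\in\partial D$ with $\nu_{\partial D}(\B(z,r_j))/r_j\to\infty$; it then builds (via the Riemann mapping theorem, Kellogg's theorem and Poisson kernel estimates, Lemmas~\ref{lem-construct-harmonic}--3.4) explicit harmonic test functions $F_j$ on $D$ that are negative off a shrinking disc about $z$ yet have $\int_{\partial D}F_j\,\dd\nu_{\partial D}>0$, and after scaling these contradict the one-sided optimality inequality of Proposition~\ref{prop-harmonic-integral}. You instead work directly with a two-sided Gateaux variational inequality at $\nu_D$ tested against perturbations $\sigma=\eta\,\nu_{\partial D}|_\gamma$ with zero mass and zero first moment, combine it with the second-order smoothness of $G$ on $E\times\gamma$ (using $\dist(E,\gamma)>0$), pass via Hahn--Banach duality, full support of $\nu_{\partial D}|_\gamma$ and continuity of $U'_{\nu_D}$ to a Campanato-type $C^{1,\alpha}$ bound on $g=U_{\nu_D}|_\gamma$, and finish with two-sided Schauder estimates and the distributional jump formula for $\ddc U_{\nu_D}=\nu_D-\omega$ across $\gamma$. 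Both routes are sound; yours is heavier machinery (you must justify the one-sided derivative $\max_E U'_\sigma$ of the $\max$ term via a Danskin-type localization near $E$, which works because $U'_\sigma$ is bounded on $X$ and continuous near $E$, and you should cite duality rather than ``Lebesgue differentiation'' for the step turning the integral bound into affine approximation), but it actually yields more, namely H\"older regularity of the boundary density rather than mere boundedness; the paper's argument is shorter, uses only the single harmonic-test-function inequality already available from \cite{DGW-hole-event}, and matches the minimal conclusion the authors need.
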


Recall the following result from \cite[Proposition 3.1]{DGW-hole-event}.

\begin{proposition}\label{prop-harmonic-integral}
Let $h$ be a continuous function on $\overline  D$ and harmonic on $ D$. Assume that $h+U_{\nu_D}\leq 0$ on $\overline  D$, then
$$  \int_{\partial  D}  h \,\dd \nu_D \leq 0.$$
\end{proposition}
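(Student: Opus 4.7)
The plan is to apply Proposition~\ref{prop-harmonic-integral} to a carefully tailored harmonic test function concentrated near the arc $A$. First I reduce to the case where $A$ is contained in a connected component $E$ of $\partial D$ with $\nu_{\partial D}(E)>0$, since otherwise $\nu_{\partial D}(A)=0$ and there is nothing to prove. The structure theorem recalled just before the statement supplies a constant $c_E>0$ with $U_{\nu_D}\leq -c_E$ on $E$; by continuity of $U_{\nu_D}$ there is then a fixed open neighborhood $N$ of $E$ in $X$, independent of $A$, on which $-U_{\nu_D}\geq c_E/2$. It further suffices to treat arcs of small length $\ell:=\length(A)\leq \ell_0$, since for $\ell\geq \ell_0$ the inequality $\nu_{\partial D}(A)\leq 1\leq \ell/\ell_0$ is already the desired bound.

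For such a short $A$, I choose a smooth cutoff $\chi_A\colon\partial D\to[0,1]$ with $\chi_A\equiv 1$ on $A$, $\supp\chi_A\subset E\cap N$ of arc length $\leq 2\ell$, and set
\begin{equation*}
\psi_A \;:=\; \tfrac{c_E}{2}\,\chi_A \;-\; \beta\,\ell \quad\text{on }\partial D,
\end{equation*}
where $\beta>0$ is an $A$-independent constant fixed below; let $h_A$ denote the harmonic extension of $\psi_A$ to $D$. On $\partial D$, $\psi_A\leq -U_{\nu_D}$ holds because $\chi_A$ is supported in $E\cap N$ where $-U_{\nu_D}\geq c_E/2$, and $\psi_A\leq 0\leq -U_{\nu_D}$ off $\supp\chi_A$. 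For the interior inequality I split $\overline{D}=(D\cap N)\cup(D\setminus N)$. On $D\cap N$ the maximum principle gives $h_A\leq c_E/2\leq -U_{\nu_D}$. On $D\setminus N$, every point $z$ lies at distance at least $\delta_0:=\dist(E,X\setminus N)>0$ from $\supp\chi_A$ (provided $\ell\leq \delta_0/2$), so the standard Poisson kernel bound for the smoothly bounded domain $D$ yields a uniform estimate of the form $H[\chi_A](z)\leq C_D\,\ell$ with $C_D$ depending only on $D$; thus
\begin{equation*}
h_A(z) \;\leq\; \tfrac{c_E}{2}\,C_D\,\ell \;-\; \beta\,\ell,
\end{equation*}
which is $\leq 0\leq -U_{\nu_D}(z)$ once we fix $\beta\geq \tfrac{c_E}{2}\,C_D$. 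Combining the two regions gives $h_A+U_{\nu_D}\leq 0$ on all of $\overline D$.

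With the hypothesis of Proposition~\ref{prop-harmonic-integral} verified, I conclude
\begin{equation*}
0 \;\geq\; \int_{\partial D} h_A\,d\nu_D \;=\; \int_{\partial D}\psi_A\,d\nu_{\partial D} \;\geq\; \tfrac{c_E}{2}\,\nu_{\partial D}(A) \;-\; \beta\,\ell\cdot\nu_{\partial D}(\partial D),
\end{equation*}
which, using $\nu_{\partial D}(\partial D)\leq 1$, rearranges to $\nu_{\partial D}(A)\leq (2\beta/c_E)\,\ell$. The main obstacle is precisely the interior inequality $h_A+U_{\nu_D}\leq 0$ on $D$: since $\ddc U_{\nu_D}=-\omega<0$ on $D$ the function $-U_{\nu_D}$ is only strictly $\omega$-subharmonic, hence the harmonic extension of its own boundary values overshoots $-U_{\nu_D}$ in the interior, and a naive Poisson-extension test function does not work. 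The resolution is the uniform negative offset $-\beta\,\ell$ together with the dichotomy $\{D\cap N,\,D\setminus N\}$: on $N$ one exploits the uniform positive margin of $-U_{\nu_D}$ granted by the structure theorem, and on its complement one uses the smoothness of $\partial D$ to obtain a uniform Poisson kernel estimate forcing the positive contribution of $\chi_A$ to be absorbed by the offset. The smoothness of $\partial D$ and the finiteness of its components, assumed throughout the paper, are essential at this step.
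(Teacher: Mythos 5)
Your proposal does not prove the statement it was supposed to prove. The target is Proposition~\ref{prop-harmonic-integral} itself: for \emph{every} $h$ continuous on $\overline D$, harmonic on $D$, with $h+U_{\nu_D}\leq 0$ on $\overline D$, one has $\int_{\partial D}h\,\dd\nu_D\leq 0$. Your argument instead takes this inequality for granted ("the plan is to apply Proposition~\ref{prop-harmonic-integral} to a carefully tailored harmonic test function") and uses it to derive the bound $\nu_{\partial D}(A)\leq c_D\cdot\length(A)$, which is a different result --- it is Proposition~\ref{prop-regularity} of the paper. Relative to the assigned task this is circular: nowhere do you argue why the integral inequality holds; you only exploit it. Note also that the paper itself does not reprove Proposition~\ref{prop-harmonic-integral} here but quotes it from \cite{DGW-hole-event} (Proposition 3.1); an actual proof would have to come from the variational characterization of $\nu_D$ as the minimizer of $\oI_D$ (e.g.\ a perturbation/balayage argument exploiting the structure $\nu_D=\omega|_{S_D}+\nu_{\partial D}$ and the behaviour of $U_{\nu_D}$ on $\supp(\nu_D)$), not from an argument that presupposes the inequality for all admissible $h$.

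As a secondary remark: read as a proof of Proposition~\ref{prop-regularity}, your route is genuinely different from the paper's. The paper argues by contradiction, using a Vitali covering to produce a point $z\in\partial D$ with $\nu_{\partial D}(\B(z,r_j))/r_j\to\infty$ and then the harmonic bumps of Lemma~\ref{lem-construct-harmonic} (with the multiply connected case handled via an auxiliary simply connected $D'$), whereas you build a direct test function $\tfrac{c_E}{2}\chi_A-\beta\ell$ with a uniform negative offset and a dichotomy $D\cap N$ versus $D\setminus N$. That direct scheme is plausible and yields the linear bound with an explicit constant, but it is not a substitute for the missing proof of the proposition actually in question, and its key quantitative input (the uniform harmonic-measure estimate $H[\chi_A](z)\leq C_D\,\ell$ at distance $\geq\delta_0$ from $\supp\chi_A$, uniformly over the multiply connected, smoothly bounded $D$) is asserted rather than justified --- this is exactly the point where the paper invokes Kellogg's theorem and the auxiliary domain $D'$.
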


Suppose that Proposition \ref{prop-regularity} fails, by the following lemma we will construct a harmonic function $h$ such that $h+U_{\nu_D}\leq 0$ on $\overline  D$ while $\int_{\partial  D}  h \,\dd \nu_D >
0$, which is absurd.

\begin{lemma}\label{lem-construct-harmonic}
Let $\mu$ be a probability measure on $\partial \D$ and $z$ be a point in $\partial \D$. Suppose that there exists a  sequence $\{r_j\}_{j\geq 1}$ of positive radii shrinking to $0$ such that $\mu(\D(z,r_j))/r_j$ tends to infinity. Then there exist a positive sequence $\{\beta_j\}_{j\geq 1}$ tending to infinity and a sequence of functions $\{F_j\}_{j\geq 1}$ which are harmonic on $\D$ and continuous on $\partial \D$, such that $\beta_j r_j$ tends to $0$ and such that
for $j$ large enough, $F_j<0$ on $\overline \D\setminus \D(z, \beta_j r_j)$ while   $\int_{\partial \D} F_j \,\dd \mu>0$.
\end{lemma}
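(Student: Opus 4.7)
The plan is to build each $F_j$ as the Poisson extension of a small bump supported near $z$ on $\partial\D$, shifted down by a well-chosen positive constant. Concretely, let $\phi_j:\partial\D\to[0,1]$ be a continuous function that equals $1$ on $\partial\D\cap\overline{\D(z,r_j)}$, vanishes off $\partial\D\cap \D(z,2r_j)$, and is linear in the arc-length parameter in between; let $H_j$ denote its Poisson integral, which is harmonic on $\D$, continuous on $\overline\D$, and agrees with $\phi_j$ on $\partial\D$. Set $F_j:=H_j-c_j$. The parameters $c_j>0$ and the scale $\alpha_j=\beta_j r_j$ at which $F_j$ is to become negative will be determined in terms of $r_j$ and $\mu_j:=\mu(\D(z,r_j))$.

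The key analytic step is a Poisson-kernel bound of the form
$$\sup_{\overline\D\setminus \D(z,\alpha_j)} H_j \leq \frac{C}{\beta_j}\qquad (\beta_j\geq 2).$$
Since $\phi_j$ vanishes on $\partial\D\setminus \D(z,2 r_j)\supset \partial\D\setminus \D(z,\alpha_j)$, the maximum principle confines the supremum in question to the arc $\partial\D(z,\alpha_j)\cap\overline\D$. For $w$ on this arc and $\zeta$ in the support of $\phi_j$, the elementary estimates $|w-\zeta|\geq \alpha_j-2r_j\geq \alpha_j/2$ and $1-|w|^2\leq 2(1-|w|)\leq 2|w-z|=2\alpha_j$ give $P(w,\zeta)=(1-|w|^2)/|w-\zeta|^2\leq 8/\alpha_j$, and integrating against an arc of length $O(r_j)$ yields the asserted bound $H_j(w)\lesssim r_j/\alpha_j=1/\beta_j$.

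Finally I choose
$$\beta_j:=\frac{1}{\sqrt{r_j\,\mu_j}},\qquad c_j:=\frac{\mu_j}{2}.$$
The hypothesis $\mu_j/r_j\to\infty$ (equivalently $r_j/\mu_j\to 0$) provides simultaneously $\beta_j\to\infty$, $\beta_j r_j=\sqrt{r_j/\mu_j}\to 0$, and $\beta_j\mu_j=\sqrt{\mu_j/r_j}\to\infty$. The last condition forces $C/\beta_j<c_j$ for $j$ large, so $F_j<0$ on $\overline\D\setminus\D(z,\alpha_j)$ by the previous paragraph. On the other hand, $H_j\geq \mathbf{1}_{\partial\D\cap\D(z,r_j)}$ on $\partial\D$, so
$$\int_{\partial\D} F_j\,\d\mu = \int_{\partial\D}\phi_j\,\d\mu - c_j \;\geq\; \mu_j - \frac{\mu_j}{2}=\frac{\mu_j}{2}>0.$$

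The one delicate point is the Poisson-kernel estimate: its sharpness depends on recognizing that the worst-case $w\in\partial\D(z,\alpha_j)\cap\overline\D$ lies essentially on the radius joining $0$ to $z$, where the factor $1-|w|^2$ is as large as possible ($\sim 2\alpha_j$), yet this is still exactly compensated by $|w-\zeta|^2\gtrsim\alpha_j^2$. Everything else is bookkeeping to choose $\beta_j$ that juggles the three competing constraints $\beta_j\to\infty$, $\beta_j r_j\to 0$, and $\beta_j\mu_j\to\infty$, which the hypothesis $\mu_j/r_j\to\infty$ allows one to satisfy simultaneously.
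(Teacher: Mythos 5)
Your proposal is correct and coincides in essentially every detail with the paper's own argument: the same choice $\beta_j = 1/\sqrt{r_j\,\mu(\D(z,r_j))}$, the same ansatz $F_j = (\text{Poisson extension of a bump on }\partial\D\cap\D(z,2r_j)) - \tfrac12\mu(\D(z,r_j))$, and the identical Poisson-kernel estimate combining $1-|w|^2\lesssim \beta_j r_j$ with $|w-\zeta|\gtrsim\beta_j r_j$ on the circle $\partial\D(z,\beta_j r_j)\cap\D$, followed by the maximum principle. The only cosmetic difference is a piecewise-linear cut-off in place of the paper's smooth one, which changes nothing.
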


\begin{proof}
Denote $\alpha_j:=\mu(\D(z,r_j))$.  Take a sequence $\beta_j:=1/\sqrt{\alpha_j r_j}$  so that  $\beta_j \alpha_j$ tends to infinity while $\beta_j r_j$ tends to zero.  Let $g_j$ be a smooth cut-off function on $\partial\D$ such that
$$  g_j= 1 \,\text{ on } \, \partial\D\cap\D(z,r_j)\quad\text{and} \quad  g_j=0 \,\text{ on } \, \partial\D\setminus \D(z,2r_j).$$
Set $f_j:= g_j-\alpha_j /2$. Let $F_j$ be the harmonic extensions of $f_j$ to $\D$.  It is clear that 
$$ \int_{\partial \D}F_j \,\dd\mu =\int_{\partial \D}g_j \,\dd\mu -\alpha_j /2
\geq
\int_{\partial\D\cap\D(z,r_j)} \mathbf 1 \,\dd\mu -\alpha_j /2
= \alpha_j-\alpha_j/2 >0$$
for large $j$ (in fact,  for every $j$, after a moment of reflection). 

Now we check that $F_j<0$ on $\overline \D\setminus \D(z, \beta_j r_j)$ for large $j$. Since $\{\beta_j\}_{j\geq 1}$
tends to infinity, all but finite $\beta_j\geq 4$. 
In this case, $F_j=-\alpha_j /2<0$ on $\partial\D\setminus \D(z, \beta_jr_j)$.  By the maximum principle, we only need to show that $F_j<0$ on $\partial \D(z,\beta_jr_j)\cap\D$.
The remaining argument is a standard application of the Poisson formula. 

For every $w\in \partial \D(z,\beta_jr_j)\cap\D$, read 
$$ F_j(w)={1\over 2\pi}\int_0^{2\pi} f_j (e^{i\theta}) {1-|w|^2\over |e^{i\theta}-w  |^2} \,\dd \theta = {1\over 2\pi}\int_0^{2\pi} g_j (e^{i\theta}) {1-|w|^2\over |e^{i\theta}-w  |^2} \,\dd \theta -\alpha_j /2. $$
The support of $g_j$ is in $\partial \D\cap \D(z,2r_j)$, whose arc length is  $(4+o(1))r_j\leq 5r_j$ for large $j$. Observe that $1-|w|^2\leq 2(1-|w|)
=
2(|z|-|w|)
\leq
2|z-w|
\leq 2 \beta_j r_j$. 
Moreover, for every 
 $e^{i\theta}$ in $\partial \D\cap \D(z,2r_j)$, we have  $|e^{i\theta}-w | 
\geq
|w-z|-|e^{i\theta}-z|
\geq
\beta_j r_j-2r_j
\geq
\beta_j r_j/2$.   Thus we can conclude that 
$$ F_j(w) \leq {1\over 2\pi}\cdot 5r_j \cdot { 2\beta_j r_j \over (\beta_j r_j/2)^2 }-\alpha_j /2 = {20\over \pi \beta_j}- \alpha_j /2<0$$  for large $j$.
\end{proof}

We have a similar result for the hole $D$.

\begin{lemma}
	Let $\mu$ be a probability measure on $\partial D$ and $z$ be a point in $\partial D$. Suppose that there exists a  sequence $\{r_j\}_{j\geq 1}$ of positive radii shrinking to $0$ such that  $\mu(\B(z,r_j))/r_j$ tends to infinity. 
  Then there exist a positive sequence $\{\beta_j\}_{j\geq 1}$ tending to infinity and a sequence of functions $\{F_j\}_{j\geq 1}$ which are harmonic on $D$ and continuous on $\partial D$, such that $\beta_j r_j$ tends to $0$ and such that
for $j$ large enough,  $F_j<0$ on $\overline D\setminus \B(z, \beta_j r_j)$ while   $\int_{\partial D} F_j \,\dd \mu>0$.
\end{lemma}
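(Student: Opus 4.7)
The plan is to follow the proof of the previous lemma, with the explicit Poisson kernel of the unit disc replaced by standard harmonic measure estimates for the smooth domain $D$.

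Set $\alpha_j:=\mu(\B(z,r_j))$ and $\beta_j:=1/\sqrt{\alpha_j r_j}$, so $\beta_j r_j\to 0$ and $\alpha_j\beta_j=\sqrt{\alpha_j/r_j}\to \infty$ by hypothesis. Choose a smooth cut-off $g_j$ on $\partial D$ with $g_j\equiv 1$ on $\partial D\cap \B(z,r_j)$ and $g_j\equiv 0$ outside $\B(z,2r_j)$, put $f_j:=g_j-\alpha_j/2$, and let $F_j$ be the harmonic extension of $f_j$ to $D$, solving the Dirichlet problem on each connected component. Since $\int g_j\,\dd\mu\geq \mu(\B(z,r_j))=\alpha_j$, one has $\int_{\partial D}F_j\,\dd\mu=\int_{\partial D}f_j\,\dd\mu\geq \alpha_j/2>0$.

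To establish $F_j<0$ on $\overline D\setminus \B(z,\beta_j r_j)$, note first that $g_j\equiv 0$ on $\partial D\setminus \B(z,\beta_j r_j)$ as soon as $\beta_j\geq 2$, so $F_j\equiv -\alpha_j/2<0$ there. By the maximum principle applied on $D\setminus \overline{\B(z,\beta_j r_j)}$, it remains to verify $F_j(w)<0$ for $w\in D\cap \partial \B(z,\beta_j r_j)$. On the connected component of $D$ containing $w$, the Poisson representation yields
\[
F_j(w)=\int_{\partial D} P_D(w,\zeta)\,g_j(\zeta)\,\dd\sigma(\zeta)\;-\;\alpha_j/2,
\]
where $P_D$ denotes the Poisson kernel of the component and $\dd\sigma$ the arc length on $\partial D$. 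Since $\partial D$ is smooth, the classical estimate $P_D(w,\zeta)\leq C\,\dist(w,\partial D)/\dist(w,\zeta)^2$ holds with $C$ depending only on $D$. Combined with $\dist(w,\partial D)\leq \dist(w,z)=\beta_j r_j$, with $\length(\supp g_j)\lesssim r_j$, and with $\dist(w,\zeta)\geq \beta_j r_j/2$ for $\zeta\in\supp g_j$ once $\beta_j\geq 4$, this yields $\int P_D(w,\zeta) g_j(\zeta)\,\dd\sigma(\zeta)\lesssim 1/\beta_j$. Therefore $F_j(w)\lesssim 1/\beta_j-\alpha_j/2<0$ for large $j$, because $\alpha_j\beta_j\to\infty$.

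The delicate point is the Poisson kernel bound, which plays the role of the explicit Poisson formula used on the disc. Such an estimate is classical for $C^\infty$ planar domains; it transfers to our Riemann surface setting because, by the smoothness hypothesis on $\partial D$, any point of $\partial D$ admits a holomorphic chart on $X$ in which $D$ becomes a smooth planar domain, and the intrinsic Green function differs from the Euclidean model only by bounded smooth terms. For components of $D$ whose closure is disjoint from $\B(z,2r_j)$, the data $f_j$ equals the constant $-\alpha_j/2$, so $F_j\equiv -\alpha_j/2$ on those components, consistent with the required negativity.
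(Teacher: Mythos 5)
Your proof is correct, but it takes a genuinely different route from the paper. Where you invoke the classical Poisson--kernel bound $P_D(w,\zeta)\lesssim \dist(w,\partial D)/\dist(w,\zeta)^2$ for smooth domains directly on $D$ (reducing the Riemann--surface setting to the planar model by a local chart and comparison of Green functions), the paper instead deliberately avoids abstract harmonic--measure estimates: it first isolates the component of $D$ near $z$, carves out an auxiliary \emph{simply connected} subdomain $D'\subset D$ with $D\cap\B(z,\gamma)\subset D'$, transports the problem to $\mathbb D$ via the Riemann mapping theorem together with Kellogg's theorem (giving bi-Lipschitz extension to the boundary), and then controls the two pieces of $\partial D'$ --- the part near $z$ contributes $O(\beta_j^{-1})$ via the explicit Poisson formula as in the disc lemma, while the inner part $\partial D'\setminus\partial D$ contributes only $O(\beta_j r_j)$ because it stays at bounded distance from $z$. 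Both approaches ultimately reduce to $F_j(w)\lesssim\beta_j^{-1}-\alpha_j/2<0$ and conclude by the maximum principle. Your route is shorter and more unified (no auxiliary subdomain is needed, and the multiply connected case is handled in one stroke), at the cost of relying on the boundary Harnack/Widman-type Poisson kernel estimate, which you state but do not prove; the paper's argument is longer but is self-contained modulo Riemann mapping, Kellogg, and the explicit disc Poisson kernel. One small point you could tighten: the transfer of the Poisson-kernel estimate from planar domains to domains on a compact Riemann surface deserves a slightly more careful comparison-of-Green-functions argument than the one-line remark you give, though the conclusion is indeed correct because the estimate is local near the boundary.
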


\begin{proof}
	If $D$ is simply connected, 
 by the Riemann mapping theorem, we can find
 a biholomorphism $\phi: D\rightarrow \mathbb{D}$. By our assumption on the regularity of $\partial D$, the Kellogg theorem (cf.~\cite[p.~49]{MR1217706}) guarantees that $\phi$
 (resp.\ $\phi^{-1}$) can be continuously 
 extended to the boundary and is Lipschitz on $\overline{D}$ (resp.\ $\overline{\mathbb{D}}$). Thus we can apply Lemma~\ref{lem-construct-harmonic} to conclude.

 \smallskip
 
In general, 	when $D$ is not simply connected,
take a   small $\gamma>0$ such that 
$\B(z,\gamma)$ intersects only one connected component of $D$. Reducing $\gamma$ if necessary, we can draw
 a simply connected subset $D'\subset D$ with  smooth boundary 
such that
$D\cap \B(z,\gamma)\subset D'$.

Denote $\alpha_j:=\mu(\B(z,r_j))$. Take   $\beta_j:=1/\sqrt{\alpha_j r_j}$, so that  $\beta_j \alpha_j$ tends to infinity, while  $\beta_j r_j$ tends to zero. 
Let $g_j$ be  a smooth cut-off function on $\partial D$ such that 
$$  g_j= 1 \,\text{ on } \, \partial D\cap\B(z,r_j)\quad\text{and} \quad  g_j=0 \,\text{ on } \, \partial D\setminus \B(z,2r_j).$$
Set $f_j:= g_j-\alpha_j/2$. Let $F_j, G_j$ be the harmonic extensions of $f_j, g_j$ respectively to $D$,   whose existence is guaranteed by \cite[Theorem I.12]{tsuji-book} since $\partial D$ contains finitely many components. It is clear that $$ \int_{\partial  D}F_j \,\dd\mu =\int_{\partial  D}g_j \,\dd\mu -\alpha_j /2\geq \alpha_j-\alpha_j/2 >0$$ for every $j$. 

Now we prove that $F_j<0$ on $\overline D\setminus \B(z, \beta_j r_j)$ for large $j$
by means of the maximum principle and the auxiliary simply connected subdomain $D'\subset D$. 
By throwing some finite $j$ if necessary, we can assume that all $\beta_j r_j<\gamma$, hence 
$\partial \B(z_j,\beta_j r_j)\cap D$
 coincides with $\partial \B(z_j,\beta_j r_j)\cap D'$ by our choice of $D'$. 
 
 Now we consider the restricted function of $G_j$  on $\partial D'$, denoted by $h_j$.   The key observation is that  the support of $h_j$ is contained in the union of $\partial D'\cap \B(z,2r_j)$ and $\partial D'\setminus \partial D$, while
 these two segments keep relatively large distance compared with $\beta_j r_j$. This feature provides the opportunity for  manipulating the Poisson formula  on $D'$ in the same spirit as using Katutani's formula on $D$. 

By the Riemann mapping theorem and Kellogg's theorem, we can 
 find
 a biholomorphism $\phi: D'\rightarrow \mathbb{D}$ such that  both $\phi$
 and $\phi^{-1}$ can be continuously 
 extended to the boundaries and are $\mathsf{K}$-Lipschitz with some positive constant $\mathsf{K}$. 
 Since $G_j$ solves the Dirichlet problem on $D'$ with the boundary data $h_j$, by the Poisson formula,  we have
 \begin{equation} \label{integral-H}
 G_j(w)={1\over 2\pi}\int_0^{2\pi} h_j\circ \phi^{-1}(e^{i\theta}) {1-|\phi(w)|^2\over |e^{i\theta}-\phi(w)  |^2} \,\dd \theta\end{equation}
 for every $w$ in $\partial \B(z_j,\beta_j r_j)\cap D'$. 
 We only need to integral against two kinds of $\theta$, either $\phi^{-1}(e^{i\theta})\in \partial D'\cap\B(z, 2 r_j)$ or $\phi^{-1}(e^{i\theta})\in \partial D'\setminus \partial D$, since
 $h_j\circ \phi^{-1}(e^{i\theta})$ vanishes otherwise. The first kind of integration contributes $ O(\beta_j^{-1})$ by the same argument as in the proof of Lemma~\ref{lem-construct-harmonic}, using the fact that $\phi$ is Lipschitz. For the second kind of $\theta$ with 
 $\phi^{-1}(e^{i\theta})\in \partial D'\setminus \partial D$, noting that
 the distance between $\partial D'\cap\B(z, \beta_j r_j)$ and $\partial D'\setminus \partial D$ is larger than a fixed positive number  while $\beta_j r_j$ tends to zero, we can show that
$${1-|\phi(w)|^2\over |e^{i\theta}-\phi(w)  |^2} =
O(\beta_j r_j).$$
Indeed, $1-|\phi(w)|^2
\leq 2 ({|\phi(z)|}-|\phi(w)|)
\leq
2|\phi(z)-\phi(w)|
\leq
2\mathsf{K}\beta_jr_j$, while
$|e^{i\theta}-\phi(w)  |^2\geq \mathsf{K}^{-2}\dist(\phi^{-1}(e^{i\theta}),w)^2$, since both $\phi$ and $\phi^{-1}$ are $\mathsf{K}$-Lipschitz. 
Thus
the second kind of integration contributes $ O(\beta_j r_j)$  because $0\leq h_j\leq 1$ by the maximum principle. Summarizing,
for every 
 $w$ in $\partial \B(z_j,\beta_j r_j)\cap D'$, we have  
$G_j(w)=O( \beta_j^{-1}) +O(\beta_j r_j)$.
Thus we can bound $F_j(w)=G_j(w)-\alpha_j /2$ from above by
$$
O(\beta_j^{-1}) +O(\beta_j r_j)
-\alpha_j/2
=
\big(O(\beta_j^{-1}) -\alpha_j/4\big)
+
\big(O( \beta_j r_j)-\alpha_j/4\big).   $$
Both brackets $(\cdot\cdot\cdot)$
are negative for large $j$ by our choice of $\beta_j$. Hence we conclude the proof by the maximum principle.
\end{proof}

\smallskip

\begin{proof}[Proof of Proposition \ref{prop-regularity}]
Assume on the contrary that such $c_D$ does not exist. Then by 
 Vitali's covering  (e.g.~\cite[Chapter 2]{MR1333890}), we can find some point  $z\in \partial  D$ with a sequence $\{r_j\}_{j\geq 1}$ of positive radii tending to zero such that  $\nu_{\partial  D}(\B(z,r_j)) /r_j \to \infty$.  By considering the probability measure $c \nu_{\partial  D}$ for a suitable $c$  and applying Lemma \ref{lem-construct-harmonic}, we can find a positive sequence $\{\beta_j\}_{j\geq 1}$ tending to zero and a sequence of functions $F_j$, which are harmonic on $ D$ and continuous on $\partial  D$, such that $\beta_j r_j$ tends to zero, $F_j<0$ on $\overline  D\setminus \B(z,\beta_j r_j)$, and that $\int_{\partial  D} F_j \,\dd\nu_{\partial  D}>0$ for all $j$.

On the other hand, recall that $U_{\nu_D}$  is strictly negative on the component of $\partial  D$ having positive $\nu_{\partial D}$ measure. In particular, $U_{\nu_D}(z)<0$.  By the upper semi-continuity of $U_{\nu_D}$, when $\beta_j r_j$ is small enough, 
for some small positive constant $\delta$, 
the function $\delta F_j$ satisfies that
$$\delta F_j +U_{\nu_D}\leq 0 \,\text{ on }\, \overline D \quad \text{and}\quad \int_{\partial  D} \delta F_j \,\dd \nu_D> 0.$$
  This contradicts to Proposition \ref{prop-harmonic-integral}.
\end{proof}

Now we can state the regularity result for $U_{\nu_D}$. The proof is standard, which is a consequence of Proposition \ref{prop-regularity}. We provide it here for the convenience of the readers.

\begin{proposition}
    \label{cor-holder}
The quasi-potential $U_{\nu_D}$ is  $1/3$-H\"older.
\end{proposition}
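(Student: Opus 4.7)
The plan is to exploit the structure theorem $\nu_D=\omega|_{S_D}+\nu_{\partial D}$ recalled from~\cite{DGW-hole-event} together with the Green-function splitting of Lemma~\ref{l:Green}, namely $G(x,y)=\log\dist(x,y)+\varrho(x,y)$ with $\varrho$ Lipschitz in both variables. Since $U_{\nu_D}$ and $U'_{\nu_D}$ differ only by an additive constant, it suffices to bound $|U'_{\nu_D}(x_1)-U'_{\nu_D}(x_2)|$. The part of $U'_{\nu_D}$ coming from $\varrho$ is Lipschitz in $x$, so the entire question reduces to estimating
\[
V(x):=\int_X \log\dist(x,y)\,d\nu_D(y).
\]
The contribution of $\omega|_{S_D}$ to $V$ is the logarithmic potential of a measure with bounded density against a smooth area form, which is classically log-Lipschitz and in particular comfortably $1/3$-H\"older, so it is harmless.

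The core of the proof is therefore to control the logarithmic potential of $\nu_{\partial D}$. The crucial input is a uniform linear ball bound: since $\partial D$ is smooth and has only finitely many components, every ball $\B(x,r)\subset X$ meets $\partial D$ in a union of arcs of total length $\lesssim r$, and combining this with Proposition~\ref{prop-regularity} yields
\[
\nu_{\partial D}(\B(x,r))\leq C\,r \qquad \text{for all $x\in X$ and all small $r>0$.}
\]

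Given $x_1,x_2\in X$ at distance $d:=\dist(x_1,x_2)$ small and a threshold $R\in[d,1]$, I split the difference $V(x_1)-V(x_2)$ according to whether $y\in \B(x_1,2R)$ or not. Off $\B(x_1,2R)$ one has $\dist(x_i,y)\geq R$ for $i=1,2$, so the mean-value inequality $|\log a-\log b|\leq |a-b|/\min(a,b)$ combined with $|\dist(x_1,y)-\dist(x_2,y)|\leq d$ gives a contribution of order $d/R$. On $\B(x_1,2R)$, and symmetrically on $\B(x_2,3R)\supset \B(x_1,2R)$, a one-line layer-cake integration using the linear ball bound shows $\int_{\B(x_i,3R)}|\log\dist(x_i,y)|\,d\nu_{\partial D}(y)\lesssim R\,|\log R|$, so this piece contributes $O(R|\log R|)$. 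Altogether,
\[
|V(x_1)-V(x_2)|\lesssim R\,|\log R|+\frac{d}{R}.
\]
Choosing $R=\sqrt{d}$ makes the right-hand side $O(\sqrt{d}\,|\log d|)$, which is bounded by a constant times $d^{1/3}$ for $d$ small, completing the $1/3$-H\"older estimate for $V$ and hence for $U_{\nu_D}$.

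The only non-routine ingredient is the uniform linear ball bound on $\nu_{\partial D}$, which rests on Proposition~\ref{prop-regularity} together with the smoothness of $\partial D$; after that the rest is a textbook potential-theoretic computation, so I do not anticipate any genuine obstacle.
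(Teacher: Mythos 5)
Your proposal is correct and matches the paper's proof in all essentials: same decomposition $\nu_D=\omega|_{S_D}+\nu_{\partial D}$, same Green-function splitting, same near/far dichotomy at a threshold $\sqrt{d}$, and the same use of Proposition~\ref{prop-regularity} (you phrase it as a linear ball bound, the paper applies it directly inside the integral, but these are equivalent). The only cosmetic difference is that you spell out the $\omega|_{S_D}$ contribution explicitly while the paper treats it as immediate.
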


\begin{proof}
Recall the decomposition $\nu_D= \omega|_{S_D} + \nu_{\partial  D}$ and  $U'_{\nu_D}=U'_{\omega|_{S_D}}+U'_{\nu_{\partial  D}}$. It is enough to show that  $U'_{\nu_{\partial  D}}$ is H\"older. Using \eqref{commute-potential}, we have
\begin{align*}
U'_{\nu_{\partial  D}}(z)-U'_{\nu_{\partial  D}}(w)=\int U'_{\nu_{\partial  D}} \,\dd(\delta_z-\delta_w)=\int (U'_{\delta_z}-U'_{\delta_w})\,\dd \nu_{\partial  D}.
\end{align*}
By~\eqref{defn-potentil-type-I}, we obtain that 
$$U'_{\delta_z}(y)-U'_{\delta_w}(y) = \varrho(z,y)-\varrho(w,y)+\log \dist(z,y)-\log\dist(w,y),$$
where  $\varrho$ is Lipschitz. Hence we only need to estimate the integration
$$\int \big(\log \dist(z,y)-\log\dist(w,y)\big)\,\dd \nu_{\partial  D}(y).  $$
Fix a small $a>0$ to be determined. We only need to  consider the case that $\dist(z,w)\leq a$. Let us separate the integration  in two parts as 
$$\Big(\int_{\dist(z,y)>2a} +\int_{\dist(z,y)\leq 2a}\Big) \big(\log \dist(z,y)-\log\dist(w,y)\big)\,\dd \nu_{\partial  D}(y). $$

If $\dist(z,y)>2a$, then $\dist(w,y)>a$. Using Proposition \ref{prop-regularity} and mean value theorem, we can obtain that  
\begin{align*}
&\Big|  \int_{\dist(z,y)>2a}    \big(\log \dist(z,y)-\log\dist(w,y)\big)\,\dd \nu_{\partial  D}(y)\Big| \\
&\leq c'_D  \int_{\dist(z,y)>2a} \dist(z,w) \cdot \dist(w,y)^{-1}\,\dd \Leb(\partial D) \leq 
c'_D a^{-1}
\cdot \dist(z,w),
\end{align*}
where $\Leb(\partial D)$ is the probability measure on $\partial D$ proportional to arc length.

If $\dist(z,y)\leq 2a$, then $\dist(w,y)\leq 3a$.  Using Proposition \ref{prop-regularity} again, we get
\begin{align*}
&\Big|  \int_{\dist(z,y)\leq 2a}    \big(\log \dist(z,y)-\log\dist(w,y)\big)\,\dd \nu_{\partial  D}(y)\Big| \\
&\leq c''_D \Big| \int_{\dist(z,y)\leq 2a} \log \dist(z,y) \,\dd \Leb(\partial D) \Big|+ c''_D\Big| \int_{\dist(w,y)\leq 3a}\log \dist(w,y) \,\dd \Leb(\partial D) \Big| ,
\end{align*}
which is $ O( c''_D a|\log a|)$. Thus, by taking $a:=\sqrt{\dist(z,w)}$, we finish the proof.
\end{proof}

\begin{remark}\rm
Actually we can show that $U_{\nu_D}$ is  Lipschitz.
Nevertheless, any $\gamma$-H\"older continuity of $U_{\nu_D}$ for some $\gamma>0$ suffices  for our purpose. 
Our boundary assumption on $\partial D$ is also not optimal; as long as $\partial D$ is $\Cc^{1+\ep}$ and does not contain cusps, $U_{\nu_D}$ is $\gamma$-H\"older continuous for some $\gamma>0$. To keep this article in an economical length, we prefer not to go further in this direction.
\end{remark}

\medskip

\section{Fekete points of the hole events}
\label{sect: Fekete points of the hole event}

For proving Theorem~\ref{thm-main-speed}, we need to investigate the minimum value of the functional 
$ -\oE_m(\mathbf p) +2 \max U'_{\delta_{\mathbf p}}$
defined on $(X\setminus  D)^{(m)}$. To this aim, we  adapt the theory of logarithmic potentials with external fields (cf.\ \cite[Chapter I]{log-book}) to compact Riemann surfaces.

\medskip

For a closed subset $E$,
its \textit{$\omega$-capacity} is defined as
$$\mathrm{cap}_\omega(E):= \sup_{\mu \in \cM(E)}  \exp\Big[ \int_X U'_\mu \,\dd \mu      \Big],$$
where $U'_\nu$ is the $\omega$-potential of type I of $\nu$, see~\eqref{defn-potentil-type-I}.
If a property holds outside a capacity zero set, then we say that it holds \textit{quasi-everywhere}. It is a fact that if the $\omega$-capacity of $E$ is zero, then the $\omega'$-capacity of $E$ is again zero for any other smooth positive $(1, 1)$-form $\omega'$ on $X$ giving unit area. Another fact is that a capacity zero set is a polar set.

\begin{lemma}
    \label{lem-cap-0}
Let $\mu$ be a probability measure on $X$ such that $\int_X U'_\mu \, \dd\mu$ is finite. If $\mathrm{cap}_\omega(E)=0$, then $\mu(E)=0$.
\end{lemma}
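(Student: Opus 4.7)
The plan is to argue by contradiction, exploiting a natural decomposition of $\mu$ along $E$ and its complement together with a uniform upper bound on potentials. As a preliminary step, I would record that the $\omega$-potential of type I of any finite positive measure is uniformly bounded above on $X$: by Lemma~\ref{l:Green}, one can write $G(x,y)=\log\dist(x,y)+\varrho(x,y)$ with $\varrho$ continuous on the compact space $X\times X$, so that $U'_\sigma(x)=\int G(x,\cdot)\,\dd\sigma\leq C\cdot\sigma(X)$ for some fixed constant $C$ and every finite positive measure $\sigma$. In particular, each bilinear expression $\int U'_{\sigma_1}\,\dd\sigma_2$ is well-defined and takes values in $[-\infty,+\infty)$.

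Now suppose for contradiction that $\mu(E)>0$. Set $\nu:=\mu|_E/\mu(E)$, a probability measure supported in $E$. Since $\capacity_\omega(E)=0$, the definition of capacity forces $\int U'_\nu\,\dd\nu=-\infty$, hence
\begin{equation*}
\int U'_{\mu|_E}\,\dd\mu|_E \;=\; \mu(E)^2 \int U'_\nu\,\dd\nu \;=\; -\infty.
\end{equation*}
Splitting $\mu=\mu|_E+\mu|_{X\setminus E}$ and using the additivity of $U'$ together with the symmetry \eqref{commute-potential}, I would expand
\begin{equation*}
\int U'_\mu\,\dd\mu \;=\; \int U'_{\mu|_E}\,\dd\mu|_E \,+\, 2\int U'_{\mu|_E}\,\dd\mu|_{X\setminus E} \,+\, \int U'_{\mu|_{X\setminus E}}\,\dd\mu|_{X\setminus E}.
\end{equation*}
The first summand equals $-\infty$ by the previous step, while each of the remaining two summands is the integral of a function bounded above by a finite constant against a finite positive measure, and is therefore bounded above by a finite constant. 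Consequently $\int U'_\mu\,\dd\mu=-\infty$, contradicting the hypothesis.

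The main technical point to handle carefully is the bilinear expansion and its interpretation when some summands are $-\infty$: the additivity $U'_{\mu_1+\mu_2}=U'_{\mu_1}+U'_{\mu_2}$ must be read pointwise with the convention that any finite number plus $-\infty$ equals $-\infty$, and one needs to ensure that no piece can contribute $+\infty$ to the sum. The uniform upper bound $U'_\sigma\leq C\cdot\sigma(X)$ established at the outset is precisely what guarantees that each of the four cross-integrals is well-defined in $[-\infty,+\infty)$, so the expansion is unambiguous and the argument closes.
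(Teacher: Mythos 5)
Your proof is correct and follows essentially the same approach as the paper: argue by contradiction, restrict and normalize $\mu$ on $E$, and observe that finiteness of the energy of $\mu$ forces finiteness of the energy of the restriction, contradicting $\mathrm{cap}_\omega(E)=0$. The paper simply asserts the key implication ``$\int U'_\mu\,\dd\mu>-\infty$ implies $\int U'_{\mu_E}\,\dd\mu_E>-\infty$'' without justification, whereas you spell it out via the bilinear expansion and the uniform upper bound $U'_\sigma\leq C\cdot\sigma(X)$ coming from the Green-function decomposition in Lemma~\ref{l:Green}; this is exactly the content the paper leaves implicit, so you have filled in a genuine gap rather than taken a different route.
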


\begin{proof}
Assume that $\mu(E)>0$ on the contrary. Then we can take a probability measure $\mu_E :=c \mu|_E$ for some appropriate $c>0$. 
Note that $\int_X U'_\mu \, \dd\mu> -\infty$
implies that
$\int_X U'_{\mu_E} \, \dd\mu_E> -\infty$.
Thus we have $\mathrm{cap}_\omega (E)\geq \exp\big[\int_X U'_{\mu_E} \, \dd\mu_E \big]>0$, which is absurd.
\end{proof}

Let $Q$ be a real continuous function on $X$.  Define the functional
\begin{equation}
    \label{functional J_Q}
\oJ^Q(\mu): = -\int_X U'_\mu \,\dd \mu +2\int_X Q \,\dd \mu      \end{equation}
for every  $\mu$ in $\cM(X\setminus D)$. 
Here $Q$ is called an \textit{external field}. We have the following analog of \cite[Theorem I.1.3]{log-book}.

\begin{theorem}\label{thm-log-Q}
The functional $\oJ^Q$ admits a unique minimizer $\nu_Q\in \cM(X\setminus D)$ with continuous $\omega$-potential $U'_{\nu_Q}$ of type I. Moreover, one has
\begin{enumerate}
\item $-U'_{\nu_Q} +Q\geq \min\oJ^Q - \int_X Q\,\dd \nu_Q$\, on $X\setminus D$;
\item $-U'_{\nu_Q} +Q= \min\oJ^Q - \int_X Q\,\dd \nu_Q$\, on $\supp(\nu_Q)$.
\end{enumerate}
\end{theorem}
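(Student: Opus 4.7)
The plan is to adapt the classical strategy of Saff--Totik~\cite[Chapter~I]{log-book} for logarithmic energy with an external field to our compact Riemann surface setting, replacing the Euclidean kernel $-\log|\cdot|$ by the Green function $G$ of Lemma~\ref{l:Green}. First I would establish existence and uniqueness. Since $Q$ is continuous on the compact surface $X$ it is bounded, and $-\int_X U'_\mu\,\dd\mu\ge 0$ by~\eqref{commute-potential}, so $\oJ^Q$ is bounded below on $\cM(X\setminus D)$. Using the decomposition $G(x,y)=\log\dist(x,y)+\varrho(x,y)$ with $\varrho$ Lipschitz, truncating $\log\dist$ from above presents $-\iint G\,\dd\mu\,\dd\mu$ as a monotone increasing limit of weak-$*$ continuous functionals, so $\oJ^Q$ is lower semicontinuous on the weak-$*$ compact space $\cM(X\setminus D)$; a minimizer $\nu_Q$ then exists. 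Uniqueness comes from strict convexity of the energy: for any signed measure $\sigma$ on $X$ with $\sigma(X)=0$ and $\sigma\neq 0$ one has $\iint G\,\dd\sigma\,\dd\sigma<0$, a Cartan-type identity obtained by mollifying and integrating by parts via $\ddc G(x,\cdot)=\delta_x-\omega$. Applied to $\sigma=\mu_1-\mu_2$ for two putative minimizers, this forces $\mu_1=\mu_2$.

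Next, I would derive the variational inequalities by a first-order perturbation. For an arbitrary $\tau\in\cM(X\setminus D)$ of finite energy, set $\mu_t:=(1-t)\nu_Q+t\tau$ and expand. Using bilinearity of the energy together with the symmetry relation in~\eqref{commute-potential}, a direct computation yields
\[
\frac{\dd}{\dd t}\Big|_{t=0^+}\oJ^Q(\mu_t)
\;=\;
2\int_X(-U'_{\nu_Q}+Q)\,\dd\tau\;-\;2\int_X(-U'_{\nu_Q}+Q)\,\dd\nu_Q\;\ge\;0.
\]
Taking $\tau=\nu_Q$ confirms that $\int_X(-U'_{\nu_Q}+Q)\,\dd\nu_Q=\min\oJ^Q-\int_X Q\,\dd\nu_Q$, matching the constant in assertions~(1)--(2). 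A standard localization argument---placing $\tau$ on the putatively bad set $\{-U'_{\nu_Q}+Q<c\}$ for $c$ strictly below the asserted constant---together with Lemma~\ref{lem-cap-0}, which guarantees that the finite-energy measure $\nu_Q$ charges no polar set, upgrades the inequality to
\[
-U'_{\nu_Q}(x)+Q(x)\ge\min\oJ^Q-\int_X Q\,\dd\nu_Q \qquad \text{quasi-everywhere on } X\setminus D.
\]
Since $-U'_{\nu_Q}$ is lower semicontinuous, $Q$ continuous, and no open subset of $X$ is polar, the quasi-everywhere inequality propagates to every point of $X\setminus D$, giving~(1). For~(2), integrating~(1) against $\nu_Q$ forces equality in the integral; any strict inequality at some $x_0\in\supp(\nu_Q)$ would, by lsc, persist on a neighborhood $V$ of $x_0$ with $\nu_Q(V)>0$, producing a strict excess in the integral and contradicting that equality.

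It remains to verify continuity of $U'_{\nu_Q}$, which I anticipate to be the main obstacle. On $\supp(\nu_Q)$, assertion~(2) rewrites $U'_{\nu_Q}=Q-\big(\min\oJ^Q-\int_X Q\,\dd\nu_Q\big)$, which is continuous. Off $\supp(\nu_Q)$, $\ddc U'_{\nu_Q}=-\omega$, so $U'_{\nu_Q}$ is smooth. The delicate point is continuity across $\partial\supp(\nu_Q)$: since $U'_{\nu_Q}$ is a priori only upper semicontinuous, one must rule out a downward jump at boundary points. I would do this via the principle of domination, arguing that such a jump would, together with~(1), violate the maximum principle for a suitable $\omega$-subharmonic extension of $U'_{\nu_Q}$ on a neighborhood in $X\setminus\supp(\nu_Q)$. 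This step should follow the lines of Saff--Totik's continuity proof for extremal weighted potentials with continuous external fields, adapted to the Riemann-surface Green function.
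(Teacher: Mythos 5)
Your overall strategy follows the same Saff--Totik blueprint the paper uses: existence by lower semicontinuity of $\oJ^Q$ on a weak-$*$ compact set, uniqueness by strict convexity of the energy, a first-order variation to obtain the Euler--Lagrange inequality quasi-everywhere, and finally a continuity argument for $U'_{\nu_Q}$. Your perturbation $\mu_t=(1-t)\nu_Q+t\tau$ is a cleaner route to the first-order condition than the paper's perturbation $\sigma_\varepsilon=\nu_Q+\varepsilon(c\sigma-\nu_Q|_B)$, and your derivation of both the q.e.\ inequality (1) and of (2) from it is sound.

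There is, however, a genuine gap in your upgrade of (1) from quasi-everywhere to everywhere. You assert that lower semicontinuity of $-U'_{\nu_Q}+Q$, together with the density of the complement of a polar set, propagates the inequality to every point of $X\setminus D$. This is false: a lower semicontinuous function can take a strictly smaller value at a single point than on the rest of its domain. For instance, the function equal to $1$ everywhere except $f(x_0)=0$ is lower semicontinuous and is $\geq 1$ quasi-everywhere (since $\{x_0\}$ is polar) yet fails the bound at $x_0$. The property you would actually need is the sub-mean-value inequality enjoyed by the quasi-subharmonic $U'_{\nu_Q}$, together with the continuity of $Q$; but even this requires a small ball around the test point contained entirely in $X\setminus D$, which fails at boundary points $x_0\in\partial D$. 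The paper's proof resolves this by a different logical order: it establishes (1) only quasi-everywhere, then (2) everywhere on $\supp(\nu_Q)$, then uses (2) to see $U'_{\nu_Q}$ is continuous on $\supp(\nu_Q)$, invokes the classical continuity principle for potentials (Tsuji, Theorem~III.2: a potential continuous as a function on the support of the defining measure is continuous on the whole space) to obtain continuity of $U'_{\nu_Q}$ on $X$, and only then upgrades (1) to everywhere by continuity. Your own proof attempt would close if you reorganized it in the same way. Relatedly, the final step of your argument---continuity of $U'_{\nu_Q}$ across $\partial\supp(\nu_Q)$ via a ``domination / maximum principle'' argument---is in the right spirit but is not carried out; the continuity principle is exactly the off-the-shelf tool that replaces it.

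One small inaccuracy: you write that ``taking $\tau=\nu_Q$ confirms'' the identity $\int(-U'_{\nu_Q}+Q)\,\dd\nu_Q=\min\oJ^Q-\int Q\,\dd\nu_Q$, but substituting $\tau=\nu_Q$ into your first-order inequality only gives $0\geq 0$. The identity is just the algebraic rewriting $\oJ^Q(\nu_Q)=-\int U'_{\nu_Q}\dd\nu_Q+2\int Q\,\dd\nu_Q$; it does not follow from the variational inequality.
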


\begin{proof}
The existence of $\nu_Q$ follows from the continuity of $Q$. The uniqueness is based on a well-known fact that
$\mu\mapsto -\int_X U'_\mu \,\dd \mu$ is strictly convex whenever it is finite, see e.g.~\cite[Proposition 12]{zei-zel-imrn}.
Now we verify the remaining two statements. 

\smallskip

We first show that (1) holds on $X\setminus D$  quasi-everywhere.  Suppose, on the contrary, that $E:=\big\{-U'_{\nu_Q} +Q< \min\oJ^Q - \int_X Q\,\dd \nu_Q \big\} \setminus D$ has positive capacity. Then 
\begin{equation}
\label{E_a}
E_a:=\big\{-U'_{\nu_Q} +Q\leq  \min\oJ^Q - \int_X Q\,\dd \nu_Q -a \big\} \setminus D
\end{equation}
also has positive capacity for some small $a>0$. Hence
 we can find some  probability measure $\sigma$ supported on $E_a$ such that $\int_X U'_{\sigma}\,\dd \sigma$  is finite. 
On the other hand, note that 
$$\int_X(-U'_{\nu_Q} +Q)\,\dd \nu_Q
= 
-\int_X U'_{\nu_Q} \,\dd \nu_Q+\int_X Q\,\dd \nu_Q = \min\oJ^Q - \int_X Q\,\dd \nu_Q.$$
Since $-U'_{\nu_Q} +Q$ is lower semicontinuous, there exists a closed ball $B\subset X\setminus D$ such that 
\begin{equation}
    \label{find some ball B}
\quad -U'_{\nu_Q} +Q >  \min\oJ^Q - \int_X Q\,\dd \nu_Q -a/2 \quad \text{on }\, B, 
\quad\text{and}\quad
\nu_Q(B)>0.
\end{equation}

 Set $c:=\nu_Q(B)>0$. Consider the probability measure $\sigma_\vep:= \nu_Q +\vep (c \sigma - \nu_Q|_B)$
 on $X$ for  small $\vep>0$ to be determined. Using~\eqref{commute-potential}, we can rewrite $\oJ^Q(\sigma_\vep)$ as 
\begin{align*}
{}&\,\oJ^Q(\nu_Q) 
-
2
\int_X U'_{\nu_Q}\,\dd( \vep c\sigma-\vep\nu_Q|_B)
+2\int_X Q\,\dd(\vep c\sigma-\vep\nu_Q|_B)
+
O(\vep^2)\\
=
&\,\oJ^Q(\nu_Q)+2\int_X (-U'_{\nu_Q} +Q)\,\dd(\vep c\sigma)-2\int_X (-U'_{\nu_Q} +Q)\,\dd(\vep\nu_Q|_B) +O(\vep^2)\\
\leq
&\,
\oJ^Q(\nu_Q) -2\cdot  a \cdot \vep c +2\cdot a/2 \cdot\vep c+O(\vep^2)
\qquad\ \ 
\text{[use~\eqref{E_a},~\eqref{find some ball B}]}
\\
=&\,\oJ^Q(\nu_Q) -ca\vep  +O(\vep^2). 
\end{align*}
Thus, for $\vep$ small enough, $\oJ^Q(\nu_\vep)<\oJ^Q(\nu_Q)$. This is impossible, because $\nu_Q$ is the minimizer of $\oJ^Q$. 

\smallskip

Next, we prove (2). Since $\int_X U'_{\nu_Q}\,\dd \nu_Q$ is finite, the set $E$ of capacity zero must have $\nu_Q$-measure $0$ by Lemma~\ref{lem-cap-0}. Hence (1) holds $\nu_Q$-almost everywhere. Moreover, $\int_X(-U'_{\nu_Q} +Q)\,\dd \nu_Q= \min\oJ^Q - \int_X Q\,\dd \nu_Q$ imples that (2) holds $\nu_Q$-almost everywhere on $\supp(\nu_Q)$.  The lower semi-continuity of $-U'_{\nu_Q}$ implies (2) holds everywhere on $\supp(\nu_Q)$.

\smallskip
Lastly, we come back to prove (1).  From (2), we see that $U'_{\nu_Q}$ is continuous as a function on $\supp(\nu_Q)$.   By a classical result~\cite[p. 54, 
Theorem III.2]{tsuji-book},  $U'_{\nu_Q}$ is continuous on the whole $X$. Hence (1) holds everywhere on $X\setminus D$.
\end{proof}

\begin{theorem}
 \label{thm-upper-envolep}
One has
$$U'_{\nu_Q} + \min\oJ^Q - \int_X Q\,\dd \nu_Q=\sup_\phi \big\{ \phi \,\, \text{is}\,\,\omega\text{-subharmonic}:\, \phi \leq Q \,\text{ on }\, X\setminus D   \big\}^*. $$
Here $*$ means the upper semicontinuous regularization.
\end{theorem}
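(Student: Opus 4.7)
Write $C := \min \oJ^Q - \int_X Q\,\dd \nu_Q$ and let $\Psi$ denote the un-regularized upper envelope
$$\Psi(z) := \sup \big\{ \phi(z) :\, \phi \text{ is } \omega\text{-subharmonic on } X \text{ and } \phi \leq Q \text{ on } X\setminus D \big\}.$$
My plan is to establish the identity $U'_{\nu_Q} + C = \Psi^*$ by two one-sided comparisons: first I will argue that $U'_{\nu_Q} + C$ belongs to the defining family of $\Psi$, so that $U'_{\nu_Q}+C \leq \Psi \leq \Psi^*$; then I will show it dominates every competitor pointwise, so that $\Psi \leq U'_{\nu_Q}+C$, and since $U'_{\nu_Q}+C$ is continuous, the inequality passes to the upper semicontinuous regularization.

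The easy direction follows directly from Theorem~\ref{thm-log-Q}. The potential $U'_{\nu_Q}$ is $\omega$-subharmonic because $\ddc U'_{\nu_Q} + \omega = \nu_Q \geq 0$, and Part~(1) of that theorem reads $U'_{\nu_Q} + C \leq Q$ on $X\setminus D$; hence $U'_{\nu_Q}+C$ is itself an admissible competitor in the supremum defining $\Psi$.

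For the reverse direction I would run a global maximum-principle / domination argument. Fix any admissible $\phi$ and set $\psi := \phi - U'_{\nu_Q}$. On $\supp(\nu_Q) \subset X\setminus D$, Part~(2) of Theorem~\ref{thm-log-Q} gives the pointwise equality $U'_{\nu_Q} + C = Q$, which combined with $\phi \leq Q$ there yields $\psi \leq C$. Away from $\supp(\nu_Q)$, the measure $\nu_Q$ vanishes, so $\ddc U'_{\nu_Q} = -\omega$ and consequently $\ddc \psi = \ddc \phi + \omega \geq 0$; in other words, $\psi$ is honestly subharmonic on the open set $X\setminus \supp(\nu_Q)$. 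Since $\nu_Q$ is a probability measure on a compact connected Riemann surface, $\supp(\nu_Q)$ is non-empty, and every connected component of $X\setminus \supp(\nu_Q)$ has non-empty boundary contained in $\supp(\nu_Q)$. Using the continuity of $U'_{\nu_Q}$ (the final step of the proof of Theorem~\ref{thm-log-Q}) together with the upper semicontinuity of $\phi$, the function $\psi$ is upper semicontinuous on all of $X$, so the classical maximum principle applied component by component propagates the bound $\psi \leq C$ into every such component. Thus $\phi \leq U'_{\nu_Q} + C$ pointwise on $X$; taking $\sup_\phi$ gives $\Psi \leq U'_{\nu_Q}+C$, and the u.s.c.\ regularization preserves this because the right-hand side is already continuous.

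The principal obstacle is making the maximum-principle step clean, which relies on three non-trivial inputs already furnished by Theorem~\ref{thm-log-Q}: the pointwise (not merely $\nu_Q$-almost-everywhere) saturation identity on $\supp(\nu_Q)$, the continuity of $U'_{\nu_Q}$, and the non-emptiness of $\supp(\nu_Q)$. Without the first, one would have to fatten the support via sublevel sets of $-U'_{\nu_Q}+Q$; without the second, one would need to regularize $U'_{\nu_Q}$; without the third, a component of the complement could exhaust $X$ and render the boundary comparison vacuous.
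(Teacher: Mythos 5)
Your proof is correct and follows essentially the same plan as the paper's: the easy inclusion via Theorem~\ref{thm-log-Q}(1), then a maximum-principle comparison on $X\setminus\supp(\nu_Q)$ anchored by the saturation identity from Theorem~\ref{thm-log-Q}(2). The only difference is a slight streamlining: you run the maximum principle on each individual competitor $\phi - U'_{\nu_Q}$ and take the supremum afterward, whereas the paper first forms the regularized envelope $V=\Psi^*$, notes that $V$ is itself $\omega$-subharmonic, and then applies the maximum principle once to $V - U'_{\nu_Q}$; your variant sidesteps the (standard but nontrivial) fact that the upper semicontinuous regularization of a locally bounded family of $\omega$-subharmonic functions is again $\omega$-subharmonic, at the small cost of needing the final remark that the comparison passes to $\Psi^*$ because $U'_{\nu_Q}+C$ is continuous.
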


\begin{proof}
By Theorem \ref{thm-log-Q}, $U'_{\nu_Q} + \min\oJ^Q - \int_X Q\,\dd \nu_Q$ itself is an $\omega$-subharmonic function which is bounded from above by $Q$ on $X\setminus D$. Hence the direction "$\leq$" is done. 

Denote the function on the right hand side by $V$, which is $\omega$-subharmonic as well. For the probability measure $\nu:=\ddc V+\omega$, we have $\ddc (V-U'_{\nu_Q})=\nu -\nu_Q$. Thus $V-U'_{\nu_Q}$ is subharmonic on $X\setminus \supp(\nu_Q)$. By Theorem~\ref{thm-log-Q}, $Q=U'_{\nu_Q} + \min\oJ^Q - \int_X Q\,\dd \nu_Q$ on $\supp(\nu_Q)$ while
$U'_{\nu_Q}$ is continuous, hence $V=U'_{\nu_Q}+\min\oJ^Q - \int_X Q\,\dd \nu_Q$ on $\supp(\nu_Q)$. By the maximum principle, $V-U'_{\nu_Q}\leq \min\oJ^Q - \int_X Q\,\dd \nu_Q$ on $X\setminus \supp(\nu_Q)$. This shows "$\geq$".
\end{proof}

For each $m>1$ and $\mathbf p\in (X\setminus D)^{(m)}$, define (see~\eqref{define delta_p},~\eqref{define E(P)})
$$ \oK^Q_m(\mathbf p):=-{m\over m-1}\oE_m (\mathbf p) + 2\int_X Q \,\dd \delta_{\mathbf p}.        $$
Let $\mathbf f_m=\sum_{j=1}^m f_{m, j}$ be a minimizer of $\oK^Q_m$. It  may not be unique and we just fix one. 
We call  $\Fc_m:=\{ f_{m, 1},\dots, f_{m, m} \}$ an \textit{$m$-th Fekete set} and call $f_{m, j}$ \textit{Fekete points}. 
Obviously, all the points in $\Fc_m\subset X\setminus D $ are pairwise distinct.

\begin{theorem}
    \label{prop-Fekete}
One has $-U'_{\nu_Q}(x)+Q(x)=\min \oJ^Q -\int_X Q\,\dd \nu_Q$\, for every $x$ in $\Fc_m$.
\end{theorem}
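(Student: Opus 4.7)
The plan is to argue by contradiction via the potential-theoretic minimum principle, in the spirit of the classical proof that weighted Fekete points lie in the support of the equilibrium measure. First I would extract the first-order optimality condition: fixing $j$ and letting only the $j$-th coordinate of $\mathbf{f}_m$ vary to $x\in X\setminus D$, a direct computation using $G(p,q)=G(q,p)$ gives
$$\oK_m^Q\big(x+{\textstyle\sum_{\ell\neq j}}f_{m,\ell}\big)-\oK_m^Q(\mathbf{f}_m)=\tfrac{2}{m}\big[v_j(x)-v_j(f_{m,j})\big],$$
where $v_j(x):=-U'_{\sigma_j}(x)+Q(x)$ with $\sigma_j:=\tfrac{1}{m-1}\sum_{\ell\neq j}\delta_{f_{m,\ell}}$ a probability measure. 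Since $\mathbf{f}_m$ minimizes $\oK_m^Q$, the left side is non-negative, so $f_{m,j}$ minimizes $v_j$ on $X\setminus D$; write $C_j:=v_j(f_{m,j})$.

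I would then suppose for contradiction that at some $f_{m,j}$ one has $-U'_{\nu_Q}(f_{m,j})+Q(f_{m,j})=F+\varepsilon$ with $\varepsilon>0$, where $F:=\min\oJ^Q-\int_XQ\,\dd\nu_Q$. By Theorem~\ref{thm-log-Q}(2) together with the continuity of $U'_{\nu_Q}$ (also part of Theorem~\ref{thm-log-Q}), the equality set $\{-U'_{\nu_Q}+Q=F\}$ is closed and contains $\supp(\nu_Q)$, hence $f_{m,j}\notin\supp(\nu_Q)$. Next, introduce the auxiliary function $h:=U'_{\nu_Q-\sigma_j}=U'_{\nu_Q}-U'_{\sigma_j}$, which satisfies $\ddc h=\nu_Q-\sigma_j$, and let $U$ be the connected component of $X\setminus\big(\supp(\nu_Q)\cup\{f_{m,\ell}:\ell\neq j\}\big)$ containing $f_{m,j}$. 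On $U$, $\ddc h=0$, so $h$ is harmonic there. Because $\nu_Q$ has finite energy it is atomless, so $\supp(\nu_Q)$ is an infinite compact set; this forces $\partial U$ to meet $\supp(\nu_Q)$ along a nontrivial portion, since otherwise $\overline{U}$ would equal $X$ minus finitely many Fekete points and would necessarily include $\supp(\nu_Q)$, contradicting $U\cap\supp(\nu_Q)=\varnothing$.

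I would then apply the minimum principle for the harmonic function $h$ on $U$. On $\partial U\cap\{f_{m,\ell}:\ell\neq j\}$, $h=+\infty$ trivially; on $\partial U\cap\supp(\nu_Q)$, Theorem~\ref{thm-log-Q}(2) gives $U'_{\nu_Q}=Q-F$, so
$$h(y)=\big(Q(y)-F\big)-U'_{\sigma_j}(y)=v_j(y)-F\ \geq\ C_j-F,$$
using the minimality of $C_j$ on $X\setminus D\supset\supp(\nu_Q)$. A standard exhaustion argument (removing shrinking neighborhoods of the isolated $+\infty$ singularities of $h$ in $\overline{U}$) then yields $h(f_{m,j})\geq C_j-F$. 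On the other hand, from $C_j=-U'_{\sigma_j}(f_{m,j})+Q(f_{m,j})$ and the contradiction hypothesis $U'_{\nu_Q}(f_{m,j})=Q(f_{m,j})-F-\varepsilon$, a direct computation gives
$$h(f_{m,j})=U'_{\nu_Q}(f_{m,j})-U'_{\sigma_j}(f_{m,j})=C_j-F-\varepsilon.$$
Combining, $-\varepsilon\geq 0$, a contradiction, so the claimed equality holds at every Fekete point.

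The main obstacle will be justifying the minimum principle rigorously when the boundary of $U$ mixes a continuum part on $\supp(\nu_Q)$ with isolated $+\infty$ singularities of $h$ at the remaining Fekete points; for this I rely on the continuity of $U'_{\nu_Q}$ furnished by Theorem~\ref{thm-log-Q} and a standard Perron-type interior exhaustion on the compact Riemann surface, exploiting that the singularities of $h$ are merely logarithmic and therefore harmless after the exhaustion.
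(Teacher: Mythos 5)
Your proof is correct, and it genuinely differs from the paper's route even though both start from the same first-order optimality condition \eqref{ineq-f-1} at each Fekete point. The paper proceeds directly: it takes the auxiliary function $\phi(z):=U'_{\delta_{\mathbf f'_m}}(z)-U'_{\delta_{\mathbf f'_m}}(f_{m,1})+Q(f_{m,1})$, which is $\omega$-subharmonic and, by the optimality condition, satisfies $\phi\leq Q$ on $X\setminus D$; Theorem~\ref{thm-upper-envolep} then dominates $\phi$ by $U'_{\nu_Q}+\min\oJ^Q-\int Q\,\dd\nu_Q$, and evaluating at $f_{m,1}$ gives the desired inequality in two lines. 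You instead argue by contradiction using the minimum principle for the harmonic function $h=U'_{\nu_Q}-U'_{\sigma_j}$ on the component $U$ of the complement of $\supp(\nu_Q)\cup\{f_{m,\ell}:\ell\neq j\}$ containing $f_{m,j}$. This avoids invoking Theorem~\ref{thm-upper-envolep} explicitly, but it costs you several technical preliminaries the paper's argument simply sidesteps: you need atomlessness of $\nu_Q$ (via the finiteness of $\int U'_{\nu_Q}\dd\nu_Q$, itself from the continuity of $U'_{\nu_Q}$), the topological claim that $\partial U$ meets $\supp(\nu_Q)$ (which does hold: if $\partial U$ were a finite set, removing it from the connected surface $X$ leaves a connected set in which $U$ is open and closed, so $\overline{U}=X$ and $\supp(\nu_Q)$ would be finite), and the exhaustion/minimum-principle argument near the logarithmic singularities of $h$. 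All of these go through, so the argument is sound, but it is essentially a hand-built special case of the comparison the envelope theorem delivers for free. The paper's approach is the standard one from weighted potential theory; yours is a workable alternative that trades one invocation of the envelope theorem for a more concrete harmonic-function argument.
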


\begin{proof}
By Theorem \ref{thm-log-Q}, we only need to show that $-U'_{\nu_Q}(x)+Q(x)\leq \min \oJ^Q -\int_X Q\,\dd \nu_Q$ every $x$ in $\Fc_m$. Without loss of generality, we take $x=f_{m, 1}$. Define the function $$F(z):=\oK_m^Q(z+f_{m, 2}+\cdots+f_{m, m})$$
for $z\in X\setminus D$,
which attends a minimum value at $z=f_{m, 1}$. Write $\mathbf f'_m:= f_{m, 2}+\cdots+f_{m, m}$. By the definition of $\oE_m$, the new function (see~\eqref{defn-potentil-type-I}) 
$$\widetilde F(z):= -{m\over m-1} {2\over m^2} \sum_{j\geq 2} G(z,f_{m, j}) +{2\over m} Q(z)=
\frac{2}{m}
\Big(-U'_{\delta_{\mathbf f'_m}}(z)+Q(z)\Big)$$
must attend its minimum  on $X\setminus D$ at $z=f_{m, 1}$. In other words,
\begin{equation}\label{ineq-f-1}
-U'_{\delta_{\mathbf f'_m}} (f_{m, 1})+Q(f_{m, 1})
\leq
-U'_{\delta_{\mathbf f'_m}} (z) + Q(z),      \quad \forall\, z\in X\setminus D.  
\end{equation}
Now we consider the auxiliary function $\phi(z):= U'_{\delta_{\mathbf f'_m}}(z)- U'_{\delta_{\mathbf f'}} (f_{m, 1}) +Q(f_{m, 1})$ on $X$. Clearly, it is $\omega$-subharmonic and bounded from above by $Q$ on $X\setminus D$. By Theorem~\ref{thm-upper-envolep}, 
$$\phi(z)\leq U'_{\nu_Q}(z) + \min\oJ^Q - \int_X Q\,\dd \nu_Q,   \quad \forall\,  z\in X.$$
Taking $z=f_{m, 1}$, we obtain the desired inequality.
\end{proof}

Using the theorems above, one can show that $\delta_{\mathbf f_m}$ converges to $\nu_Q$ weakly. Since we do not need this in this article, we left the proof to the readers.
Now we show that any two Fekete points cannot be too close.

\begin{proposition} \label{prop-fekete-seperate}
If  $U'_{\nu_Q}$  is $\gamma$-H\"older for some $0<\gamma\leq 1$, then
there exists a constant $c_Q>0$ independent of $m$ such that 
$$\dist(f_{m, j_1},  f_{m, j_2})\geq c_Q m^{-1/\gamma},  \quad \text{if} \quad j_1\neq j_2.   $$
\end{proposition}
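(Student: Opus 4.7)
The plan relies on the following maximum property of the function
$$F_j(z) := U'_{\delta_{\mathbf f_m^{(j)}}}(z) - U'_{\nu_Q}(z),$$
where $\mathbf f_m^{(j)}:=\sum_{k\neq j}f_{m,k}$ denotes the Fekete configuration with $f_{m,j}$ removed: for every Fekete point $f_{m,j}$, the inequality $F_j(z) \leq F_j(f_{m,j})$ holds on all of $X$. This is implicit in the proof of Theorem~\ref{prop-Fekete}: combining the envelope inequality $\phi_j(z) \leq U'_{\nu_Q}(z) + \min\oJ^Q - \int_X Q\,\dd\nu_Q$ on $X$ (coming from Theorem~\ref{thm-upper-envolep}) with the equality case $-U'_{\nu_Q}(f_{m,j}) + Q(f_{m,j}) = \min\oJ^Q - \int_X Q\,\dd\nu_Q$ from Theorem~\ref{prop-Fekete}, a short rearrangement produces the claim.

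Denote $r:=\dist(f_{m,j_1},f_{m,j_2})$ and pick a parameter $\eta>0$ of order $m^{-1/\gamma}$ to be fixed below; it suffices to treat the case $r<\eta$, since otherwise the desired bound is immediate. In a local holomorphic chart centered at $f_{m,j_1}$, I will average the inequality $F_{j_1}(z) \leq F_{j_1}(f_{m,j_1})$ over the circle $\partial B(f_{m,j_1},\eta)$. The classical identity
$$\frac{1}{2\pi}\int_0^{2\pi}\log|\eta e^{i\theta}-a|\,\dd\theta = \log\max(\eta,|a|),$$
applied term-by-term to $U'_{\delta_{\mathbf f_m^{(j_1)}}}=\frac{1}{m-1}\sum_{k\neq j_1}G(\cdot,f_{m,k})$, combined with the decomposition $G(\cdot,y)=\log\dist(\cdot,y)+\varrho(\cdot,y)$ from Lemma~\ref{l:Green} and the Lipschitz continuity of $\varrho$, yields
$$\frac{1}{2\pi}\int_0^{2\pi}U'_{\delta_{\mathbf f_m^{(j_1)}}}(f_{m,j_1}+\eta e^{i\theta})\,\dd\theta - U'_{\delta_{\mathbf f_m^{(j_1)}}}(f_{m,j_1}) = \frac{1}{m-1}\sum_{k:\,d_k<\eta}\log(\eta/d_k)+O(\eta),$$
where $d_k:=\dist(f_{m,j_1},f_{m,k})$. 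All summands are non-negative, so this quantity is at least $\tfrac{1}{m-1}\log(\eta/r)+O(\eta)$. On the other hand, Proposition~\ref{cor-holder} bounds the analogous deviation for $U'_{\nu_Q}$ by $O(\eta^\gamma)$ via $\gamma$-Hölder regularity.

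Combining these two estimates via the integrated form of $F_{j_1}\leq F_{j_1}(f_{m,j_1})$ produces
$$\log(\eta/r)\leq C(m-1)\eta^\gamma+O((m-1)\eta).$$
Setting $\eta:=c_0\, m^{-1/\gamma}$ with $c_0>0$ sufficiently small makes both right-hand terms bounded by absolute constants (using $\gamma\leq 1$ so that $(m-1)\eta^\gamma = O(1)$ and $(m-1)\eta = O(m^{1-1/\gamma}) = O(1)$), hence $\eta/r\leq e^{O(1)}$, yielding $r\geq c_Q m^{-1/\gamma}$ with $c_Q>0$ independent of $m$. The crucial technical device is the use of the \emph{circle} mean rather than a disc mean: the former cleanly extracts the logarithmic singularities at nearby Fekete points without producing the constant-order errors per point that a disc average would generate (via the additional $-\tfrac12$ term appearing in the disc mean of $\log|z-a|$ for $|a|\leq\eta$); such errors would otherwise accumulate to $O(1)$ and defeat the estimate whenever several Fekete points happen to cluster inside $B(f_{m,j_1},\eta)$.
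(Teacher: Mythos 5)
Your proof is correct and takes a genuinely different route from the paper's. Both proofs begin with the same starting inequality
\[
U'_{\delta_{\mathbf f'_m}} (z) - U'_{\delta_{\mathbf f'_m}} (f_{m, 1})  \leq   U'_{\nu_Q}(z)   -U'_{\nu_Q}(f_{m, 1})   \quad\text{for all}\quad z\in X,
\]
which you derive directly from Theorems~\ref{thm-upper-envolep} and~\ref{prop-Fekete} (a valid alternative to the paper's maximum-principle argument). The divergence is in what comes next. The paper translates this potential inequality into a uniform bound $|H|\leq e^{A+\mathsf K'}$ on a holomorphic function $H$ built from the canonical sections $\mathbf 1_{\Oc(f_{m,j})}$ in a local trivialization; this requires a careful sequence of preparations (choosing uniformly Lipschitz charts, trivializing the line bundles, switching Hermitian metrics via Lemma~\ref{key trick to change holomorphic sections for new metrics} to gain uniform Lipschitz weights), and then concludes by the Schwarz-type Lemma~\ref{a variant of Schwarz lemma}. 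You instead stay entirely on the potential-theoretic side: average the inequality over the circle $\partial\B(f_{m,1},\eta)$, exploit the logarithmic mean value identity $\frac{1}{2\pi}\int_0^{2\pi}\log|\eta e^{i\theta}-a|\,\dd\theta=\log\max(\eta,|a|)$ together with the Lipschitz decomposition $G=\log\dist+\varrho$ to extract the non-negative singular contributions, bound the $U'_{\nu_Q}$-side by $O(\eta^\gamma)$ via H\"older continuity, and conclude by choosing $\eta\sim m^{-1/\gamma}$. This is shorter and more elementary, sidestepping the whole line bundle/metric bookkeeping; the trade-off is that it is very much a dimension-one trick (the log mean value identity has no direct analog for balls in $\C^t$, $t\geq 2$), whereas the paper's Schwarz-lemma framing is closer in spirit to arguments that could be attempted in higher dimension. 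A couple of small points worth tightening in a final write-up: the chart distance $|a_k|$ and the intrinsic distance $d_k$ differ by bounded multiplicative factors, so the equality in your displayed circle-mean formula should be read up to an $O(1)/(m-1)$ error, which is harmless since you only retain the single term $k=j_2$; and your closing remark about the disc mean is not quite accurate—the disc mean of $\log|z-a|$ minus $\log|a|$ is still non-negative for $|a|<\eta$, so a disc average would in fact also work (the circle mean is simply cleaner). Neither of these affects the validity of the argument.
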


Some arguments in the proof
are inspired by~\cite[Proposition A.2]{Nishry-Wennman}, though
 technically much more involved. One new difficulty is to find appropriate local holomorphic coordinate charts on $X $ to apply the Schwarz lemma. Another new difficulty is to deal with the local weight functions of certain  holomorphic line bundles. The next two lemmas will be useful for handling the mentioned two difficulties respectively. 

 \begin{lemma}
     \label{a variant of Schwarz lemma}
     Let $g: \mathbb{D}\rightarrow \mathbb{C}$
     be a holomorphic function. Let $x, y$ be two points in $\mathbb{D}$. 
     Assume that  $g(y)=0$, $|g(x)|=a$, and
     that $\sup_U |g| <b$ for some constant $a, b>0$, where $U \subset  \mathbb{D}$   is a simply connected region
     containing $x$. Then
     $|x-y|\geqslant \frac{a}{a+b}\cdot \mathrm{dist}(x, \partial U)$. 
 \end{lemma}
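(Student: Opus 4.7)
The plan is to reduce this to the classical Schwarz lemma on a round disc around $x$. Set $r := \mathrm{dist}(x, \partial U) > 0$; since $U$ is open, the Euclidean disc $\mathbb{D}(x, r)$ lies inside $U$. First I would dispose of the case $y \notin \mathbb{D}(x, r)$: there $|x-y| \geq r \geq \tfrac{a}{a+b}\, r$ automatically, since $b > 0$ forces $\tfrac{a}{a+b} < 1$, so the conclusion holds trivially.

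In the remaining case $y \in \mathbb{D}(x, r) \subset U$, the trick is to replace $g$ by the auxiliary function $h(z) := g(z) - g(x)$, which kills the value at $x$: by construction $h(x) = 0$ and $|h(y)| = |g(x)| = a$, while on $U$ the triangle inequality yields $|h| \leq |g| + |g(x)| < a + b$. Precomposing with the affine parametrization $w \mapsto x + rw$ and dividing by $a+b$ produces a holomorphic self-map of $\mathbb{D}$ vanishing at the origin, so the Schwarz lemma gives the linear growth bound $|h(z)| \leq (a+b)\,|z-x|/r$ throughout $\mathbb{D}(x, r)$. Evaluating at $z = y$ and rearranging yields $a \leq (a+b)|x-y|/r$, which is exactly $|x-y| \geq \tfrac{a}{a+b}\, r$.

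I do not expect any real obstacle here. The only bit of bookkeeping is to turn the bound of $g$ over the (possibly irregularly shaped) region $U$ into a bound over a round disc, which is precisely what the subtraction $h = g - g(x)$ accomplishes, trading a factor of $b$ for $a+b$ but making Schwarz directly applicable on $\mathbb{D}(x, r) \subset U$. The simple connectivity hypothesis on $U$ plays no role in this argument, and is presumably imposed to streamline the later application in the proof of Proposition~\ref{prop-fekete-seperate}.
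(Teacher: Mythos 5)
Your proof is correct, and it takes a genuinely different route from the paper's. The paper argues by contradiction and applies the Schwarz lemma \emph{centered at $y$} (where $g$ itself vanishes): assuming $|x-y|$ is too small, one checks that the disc $\mathbb{D}\bigl(y,\tfrac{b}{a}|x-y|\bigr)$ fits inside $U$, and Schwarz then forces $|g(x)|<a$, a contradiction; that version needs $x$ to lie in this disc, which holds because $a=|g(x)|\leq \sup_U|g|<b$ guarantees $\tfrac{b}{a}>1$. You instead argue directly, centering Schwarz at $x$ by passing to $h:=g-g(x)$, which vanishes at $x$ at the cost of inflating the sup bound from $b$ to $a+b$ --- and this inflation is precisely where the factor $\tfrac{a}{a+b}$ comes from. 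Your case split (whether or not $y$ lies in the maximal inscribed disc $\mathbb{D}(x,r)$) is clean, both cases are handled correctly, and the Schwarz estimate $a=|h(y)|\leq (a+b)|x-y|/r$ gives the claim. You are also right that neither argument uses simple connectivity of $U$; it is imposed for the downstream application rather than for this lemma.
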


\begin{proof}
    Assume on the contrary that
    $|x-y|< \frac{a}{a+b}\cdot \mathrm{dist}(x, \partial U)$.
    Then $y\in U$, and
     $$\mathrm{dist}(y, \partial U)\geqslant 
     \mathrm{dist}(x, \partial U)
     -|x-y|
     \geqslant
     \frac{b}{a+b}\cdot \mathrm{dist}(x, \partial U)>\frac{b}{a}\cdot |x-y|.$$
     Hence the disc $D':=\mathbb{D}(y, \frac{b}{a}\cdot|x-y|)$ centered at $y$ with the radius $\frac{b}{a}\cdot|x-y|$ is contained in $U$. By the maximum principle,  $\sup_ { D'}|g| \leqslant \sup_{ U} |g| <b$.
 Thus by the Schwarz lemma, noting that $g(y)=0$, we obtain that
 $$|g(x)|
 =
 \big|g(y+(x-y))\big|\leqslant \frac{|x-y|}{\frac{b}{a}\cdot|x-y|}\cdot
     \sup_{D'} |g|
     <
     \frac{a}{b}\cdot b=a,$$
     which is absurd.
\end{proof}

 \begin{lemma}
     \label{key trick to change holomorphic sections for new metrics}
     Let $\mathcal{L}$ be a holomorphic line bundle on $X$, and let $U\subset X$ be a simply connected region.
     Let $\fh_1, \fh_2$ be two smooth metrics of $\mathcal{L}$ over $U$, giving the same curvature.
     Then for any holomorphic section $s_1\in \Gamma(U, \mathcal{L})$ of $\mathcal{L}$
     on $U$, one can find a  holomorphic section $s_2\in \Gamma(U, \mathcal{L})$
     with 
     $\|s_2\|_{\fh_2}
     =
    \|s_1\|_{\fh_1} $ on $U$.
 \end{lemma}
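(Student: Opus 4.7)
The plan is to exploit that, when two Hermitian metrics on a line bundle share the same curvature, their (pointwise) ratio is the exponential of a (pluri)harmonic function, and on a simply connected domain such a function has a harmonic conjugate. This lets us twist $s_1$ by a holomorphic unit correcting factor.

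Concretely, the first step is to compare $\fh_1$ and $\fh_2$ as metrics on $\mathcal{L}|_U$. The quantity $\rho:=\tfrac12\log(\fh_1/\fh_2)$ is a well-defined real smooth function on $U$ (independent of trivialization, because the ratio of two Hermitian metrics on the same line bundle is a global function). In a local holomorphic frame $e$ of $\mathcal{L}$, if $\|e\|_{\fh_i}^2=e^{-2\psi_i}$, then $\rho=\psi_2-\psi_1$, and the curvature forms are $c_1(\mathcal{L},\fh_i)=\ddc\psi_i$. The hypothesis $c_1(\mathcal{L},\fh_1)=c_1(\mathcal{L},\fh_2)$ therefore translates into $\ddc\rho=0$ on $U$, i.e.\ $\rho$ is harmonic on $U$.

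Next, I would use the hypothesis that $U$ is simply connected. A harmonic function on a simply connected Riemann-surface domain admits a harmonic conjugate, so there exists a holomorphic function $f\in\Oc(U)$ with $\Re f=\rho$. The multiplier $e^{f}$ is then a nowhere-vanishing holomorphic function on $U$. Define
\begin{equation*}
s_2:=e^{f}\cdot s_1 \in \Gamma(U,\mathcal{L}).
\end{equation*}
Since $s_1$ is holomorphic and $e^{f}$ is holomorphic, $s_2$ is a holomorphic section on $U$.

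Finally, a direct computation gives the norm identity. Working in the local frame $e$, write $s_1=g\cdot e$ with $g$ holomorphic; then
\begin{equation*}
\|s_2\|_{\fh_2}^2=|e^{f}g|^2 e^{-2\psi_2}=|g|^2 e^{2\Re f}e^{-2\psi_2}=|g|^2 e^{2\rho}e^{-2\psi_2}=|g|^2 e^{-2\psi_1}=\|s_1\|_{\fh_1}^2,
\end{equation*}
which holds globally on $U$ since both sides are invariantly defined. I do not anticipate a genuine obstacle here: the only subtle point is invoking the simple connectedness of $U$ to promote the harmonic function $\rho$ to the real part of a single-valued holomorphic function on all of $U$, and once this is granted, the construction and the verification are immediate. (If $U$ failed to be simply connected, one could only do this locally, and the resulting multipliers would in general have nontrivial monodromy.)
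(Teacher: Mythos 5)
Your proof is correct and takes essentially the same route as the paper: you observe that the ratio of the two metrics is the exponential of a harmonic function (because the curvatures agree), use simple connectedness to realize that harmonic function as the real part of a holomorphic function, and multiply $s_1$ by the resulting nowhere-vanishing holomorphic factor. Your version is slightly more careful with normalizations (the explicit factor $\tfrac12$ and the local frame computation), whereas the paper states the multiplier in a more compressed way, but the argument is the same.
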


 \begin{proof}
    We can define the ratio
     $\fh_2/\fh_1$ at every point of $U$, and check that it is a positive smooth function on $U$. 
     Since the curvature of $\fh_1$ and $\fh_2$ coincide,
$\log(\fh_2/\fh_1)$ is a
harmonic function on $U$.
As $U$ is simply connected, we can write $\log(\fh_2/\fh_1)= \mathsf{Re}(u)$ as the real part of a holomorphic funtion $u\in \Oc(U)$. Hence
$s_2:=e^{u}\cdot s_1$
suffices.
 \end{proof}

\smallskip

\begin{proof}[Proof of Proposition~\ref{prop-fekete-seperate}]
{\bf Step 1.}
By symmetry,  we only need to show the existence of 
$c_Q>0$ and $M_Q\in \mathbb{Z}_+$ such that
$\dist(f_{m, 1},f_{m, 2})\geq c_Q m^{-1/\gamma}$ for all $m\geq M_Q$.  

Write $\mathbf f'_m:= f_{m, 2}+\cdots+f_{m, m}$. 
For $z\in \supp(\nu_Q)$, by Theorems~\ref{thm-log-Q},~\ref{prop-Fekete} and \eqref{ineq-f-1}, we have 
$$U'_{\delta_{\mathbf f'_m}} (z) -    U'_{\delta_{\mathbf f'_m}} (f_{m, 1}) \leq Q(z)-Q(f_{m, 1}) =  U'_{\nu_Q}(z) -U'_{\nu_Q}(f_{m, 1}).   $$
Consequently, $U'_{\delta_{\mathbf f'_m}} (z) - U'_{\nu_Q}(z)    \leq  U'_{\delta_{\mathbf f'_m}} (f_{m, 1})  -U'_{\nu_Q}(f_{m, 1})$ for $z\in \supp(\nu_Q)$.
Since $U'_{\nu_Q}$ is harmonic outside $\supp(\nu_Q)$, 
$U'_{\delta_{\mathbf f'_m}} - U'_{\nu_Q}$ is subharmonic on $X
\setminus\supp(\nu_Q)$.
By the maximum principle,
\begin{equation}\label{U-fm-U-nu-Q}
U'_{\delta_{\mathbf f'_m}} (z) - U'_{\delta_{\mathbf f'_m}} (f_{m, 1})  \leq   U'_{\nu_Q}(z)   -U'_{\nu_Q}(f_{m, 1})   \quad\text{for all}\quad z\in X. 
\end{equation}
Since $U'_{\nu_Q}$ is $\gamma$-H\"older continuous with some constant
$A:=\norm{U'_{\nu_Q}}_{\Cc^\gamma}$, if $\dist(z,f_{m, 1})\leq (m-1)^{-1/\gamma}$, 
the right-hand-side of~\eqref{U-fm-U-nu-Q} is $\leq A\cdot(m-1)^{-1}$.
Hence~\eqref{formula-potential-point}
 implies that
 \[
 \log \big\| \mathbf 1_{\Oc(\mathbf f'_m)}(z)\big\|  _{\fh_{\mathbf f'_m}}   -  \log \big\|\mathbf 1_{\Oc(\mathbf f'_m)}(f_{m, 1})  \big\|_{\fh_{\mathbf f'_m}}     \leq  A\cdot(m-1)^{-1},
 \]
 or equivalently (let $\fh_j$ be the metric of $\Oc(f_{m, j})$)
\begin{equation}\label{dist-z-f1}
\dfrac{
\prod_{j=2}^m
\|\mathbf{1}_{\Oc(f_{m, j})}\|_{\fh_j}(z)
}
{
\prod_{j=2}^m
\|\mathbf{1}_{\Oc(f_{m, j})}\|_{\fh_j}(f_{m, 1})
}
\leq
e^A,\qquad
\forall\,
z\in 
\mathbf{B}_{m}:=\mathbb{B}(f_{m, 1}, (m-1)^{-1/\gamma}).
\end{equation}

\smallskip\noindent
{\bf Step 2.} 
The idea is to show that $f_{m, 2}$ can not be too close to $f_{m, 1}$ by exploring~\eqref{dist-z-f1}. 
First, we  make some  technical preparations which will be essential later.

For every point $y\in X$, we take a local holomorphic chart $g_y: \mathbb{D}_2\rightarrow V_y$ with $g_y(0)=y$.
Since $\{g_y(\mathbb{D})\}_{y\in X}$ covers $X$, by compactness, we can take finitely many points
$y_1, \dots, y_M$ such that
$X$ is covered by the union of $W_{j}:=g_{y_j}(\mathbb{D})$ for $j=1, \dots, M$.

Since each $g_{y_j}: \mathbb{D}_2\rightarrow V_{y_j}$ is a biholomorphism, 
by compactness,
both the restrictions $g_{y_j} | _{\overline{\mathbb{D}}}$ and $g_{y_j}^{-1}{ | _{\overline{W_j}}}$
are $\mathsf K_j$-Lipschitz for some large $\mathsf K_j>0$
with respect to the Euclidean norm on $\mathbb{D}$ and the given distance on $X$.
Set $\mathsf K:=\max\{\mathsf K_1, \dots, \mathsf K_M\}$.

For each $j=1, 2, \dots, m$, we fix a smooth weight function
$\rho_j$ on $V_{y_j}$ satisfying the equation
$\ddc \rho_j=\omega$.
By compactness, we can take some large constant $\mathsf K'>0$ such that all $\rho_j$ are $\mathsf K'$-Lipschitz on $W_j$ for $j=1, 2, \dots, M$.

\smallskip\noindent
{\bf Step 3.}
By compactness argument, there exists some 
  uniform radius $r>0$ such that  for any $z\in X$, the small disc $\mathbb{B}(z, r)$ is contained entirely in one of $W_1, \dots, W_M$.
  Take a large integer $M_Q$ so that $(m-1)^{-1/\gamma}<r$ for all $m
  \geq M_Q$.
  Without loss of generality, we can  assume that $\mathbf{B}_{m}\subset W_1$.

 Since $W_1$ is open, any holomorphic line bundle on $W_1$ is holomorphically trivial (cf.~\cite[p.~229]{Foster}). In particular, for
  each line bundle
  $\Oc(f_{m, j})$, $j=2, \dots, m$, 
  we can fix a  nowhere vanishing section
  $e_j\in \Gamma(W_1, \Oc(f_{m, j}))$ and use it to trivialize
  $\Oc(f_{m, j}) | _{W_1}$
  by identifying
  every holomorphic section
  $s\in \Gamma(W_1, \Oc(f_{m, j}))$
  with the holomorphic function $s/e_j \in \Oc(W_1)$.
  In accordance with this trivialization, 
  the metric $\fh_j | _{W_1}$
  of $\Oc(f_{m, j}) | _{W_1}$ defines a smooth weight function $\varphi_j$ on $W_1$ by the law 
  $$\|s\|_{\fh_j}(z)=|s/e_j|(z)\cdot e^{-\varphi_j(z)},\qquad\forall z\in W_1.$$
  Since the curvature of $\fh_j$ is $\omega$, we  have
  $\ddc \varphi_j=\omega$.

In this way, each canonical section
$\mathbf{1}_{\Oc(f_{m, j})}\in \Gamma(W_1, \Oc(f_{m, j}))$ for $j=2, \dots, m$ has norm
\[
\|\mathbf{1}_{\Oc(f_{m, j})}\|_{\fh_j}(z)=|h_j(z)|\cdot e^{-\varphi_j(z)},
\qquad
\forall z\in W_1,
\]
where $h_j:=\mathbf{1}_{\Oc(f_{m, j})}/e_j$
is some holomorphic functions. 
Hence we can read~\eqref{dist-z-f1} as
\begin{equation}
    \label{key estimate e^A}
\dfrac{ \prod_{j=2}^m |h_j(z)|\cdot \prod_{j=2}^m e^{-\varphi_j (z)}}
{
\prod_{j=2}^m |h_j(f_{m, 1})|\cdot \prod_{j=2}^m e^{-\varphi_j (f_{m, 1})}
}
\leq
e^A, 
\qquad
\forall\,
z\in 
\mathbf{B}_{m}.
\end{equation}

\smallskip\noindent
{\bf Step 4.}
For some technical reason to be seen later, we would like to have some uniform 
Lipschitz bound of each $\varphi_j$ on $W_1$. One way to achieve this is by using some singularity decomposition result (see Lemma~\eqref{l:Green}) of the Green kernel $G(x, y)$ near the diagonal $\{x=y\}\subset X\times X$ and by  choosing appropriate $e_j, \varphi_j$.

Nevertheless, here we present an alternative method.
To apply Lemma~\eqref{key trick to change holomorphic sections for new metrics}, for every $j=2, \dots, m$, we define another metric $\fh_j'$ of the line bundle $\Oc(f_{m, j})$ on
$W_1$,
by evaluating 
any holomorphic section
$s\in \Gamma(W_1, \Oc(f_{m, j}))$
with the norm
$\|s\|_{\fh_j'}(z)=|s/e_j|(z)\cdot e^{-\rho_1(z)}$
at each point $z\in W_1$.
Since $\ddc \rho_1=\omega$, the curvature of $\fh_j'$ coincide with that of $\fh_j$ on $W_1$. By
Lemma~\eqref{key trick to change holomorphic sections for new metrics},
we can find some
holomorphic section $\mathbf{1}_{j}'\in  \Gamma(W_1, \Oc(f_{m, j}))$ with the norm equality
$\|\mathbf{1}_{\Oc(f_{m, j})}\|_{\fh_j}
=
\|
\mathbf{1}_{j}'
\|_{\fh_j'}
$
on $W_1$. 
Equivalent,
we have
\[
|h_j(z)|\cdot e^{-\varphi_j(z)}
=
|\widehat{h_j}(z)|
\cdot e^{-\rho_1(z)},
\qquad
\forall z\in W_1,
\]
where $\widehat{h_j}:=\mathbf{1}_{j}'/e_j\in \Oc(W_1)$ is the  representative holomorphic function of $\mathbf{1}_{j}'$  in the local trivialization.
Thus we can rewrite~\eqref{key estimate e^A} as
\begin{equation}
    \label{key estimate e^A 3}
\frac{\prod_{j=2}\widehat{h}_j(z)}{\prod_{j=2}\widehat{h}_j(f_{m, 1})}
\cdot
e^{\sum_{j=2}^m [
\rho_1(f_{m, 1})-\rho_1(z)
]}
\leq
e^A, 
\qquad
\forall\,
z\in 
\mathbf{B}_{m}.
\end{equation}
Note that by our  preparation, the weight function
$\rho_1$ is $\mathsf K'$-Lipschitz on $W_1$, which contains $\mathbf{B}_m$. 
Hence we have
$$
\sum_{j=2}^m |
\rho_1(f_{m, 1})-\rho_1(z)|
\leq
(m-1)\cdot
\mathsf K'\cdot
(m-1)^{-1/\gamma}
\leq
\mathsf K',
\qquad
\forall\,
z\in 
\mathbf{B}_{m}.
$$
By the above two estimates,
the  holomorphic function 
$$ H:=    \prod_{j=2}^m  {\widehat{h}_j\over \widehat{h}_j(f_{m, 1})},$$
which is well-defined on $W_1$, has absolute norm $\leq e^{A+\mathsf K'}$
on $\mathbf{B}_m$.
 
\smallskip\noindent
{\bf Step 5.}
Assume on the contrary that $\mathrm{dist}(f_{m, 1}, f_{m, 2})< \lambda\cdot (m-1)^{-1/\gamma}$
for some sufficiently small $\lambda$.
It is clear that $f_{m,2}\in  \mathbf{B}_m$, $H(f_{m, 1})=1$, and that $H(f_{m, 2})=0$ because the canonical section $\mathbf{1}_{\Oc(f_{m, 2})}$ vanishes at $f_{m, 2}$ by definition.

Now we are in a position to apply Lemma~\ref{a variant of Schwarz lemma} upon 
$$g:=H\circ g_{y_1},\
x:=g_{y_1}^{-1}(f_{m, 1}),\
y:=g_{y_1}^{-1}(f_{m, 2}), \  
a=1, \  
b:=e^{A+\mathsf K'},\
U:=g_{y_1}^{-1}(\mathbf{B}_m).$$
Since  $g_{y_1}$ is $\mathsf K$-Lipschitz by our preparation, 
$$
(m-1)^{-1/\gamma}
=
 \mathrm{dist}(f_{m, 1}, \partial \mathbf{B}_m)
 =
 \mathrm{dist}(g_{y_1}(x), g_{y_1}(\partial U))
\leq
\mathsf K\cdot
\mathrm{dist}(x, \partial U).
$$
Thus Lemma~\ref{a variant of Schwarz lemma} guarantees that
\[
|x-y|
\geq
\frac{1}{1+e^{A+\mathsf K'}}\cdot \mathrm{dist}(x, \partial U)
\geq
\frac{1}{1+e^{A+\mathsf K'}}\cdot 
\mathsf K^{-1}
\cdot
(m-1)^{-1/\gamma}.
\]
On the other hand,
$g_{y_1}^{-1}$ is also $\mathsf K$-Lipschitz.
Hence
\[
|x-y|
=
|g_{y_1}^{-1}(f_{m, 1})-
g_{y_1}^{-1}(f_{m, 2})
|
\leq
\mathsf K\cdot 
\mathrm{dist}(f_{m, 1}, f_{m, 2}).
\]

Summarizing the above two inequalities,
we obtain
\[
\mathrm{dist}(f_{m, 1}, f_{m, 2})
\geq
\frac{1}{1+e^{A+B}}\cdot 
\mathsf K^{-2}
\cdot
(m-1)^{-1/\gamma}.
\]
Therefore, if $\lambda<\frac{\mathsf K^{-2}}{1+e^{A+\mathsf K'}}$, 
we get a contradiction.
Hence $c_Q=\frac{\mathsf K^{-2}}{1+e^{A+\mathsf K'}}$ suffices.
\end{proof}

\medskip
\section{Functional values with Fekete points}
\label{sect: Functional values with Fekete points}

This goal of this section is to find a  $\mathbf p\in (X\setminus D)^{(m)}$ such that  
\begin{equation}\label{exist-p}
-\oE_m(\mathbf p) +2 \max U'_{\delta_{\mathbf p}} \leq \min\oI_D +O(\log m /m).
\end{equation}
The minimizer of $\oI_D$ is $\nu_D$. While we do not know much about  $-\oE_m(\mathbf p) +2 \max U'_{\delta_{\mathbf p}}$. We will make use of the two functionals $\oJ^Q$ and $\oK^Q_m$ defined in Section \ref{sect: Fekete points of the hole event}. These functionals are linked according through the following result.

\begin{proposition}
\label{vq=vd}
For $Q:=U'_{\nu_D}$, one has  $\nu_Q=\nu_D$.
\end{proposition}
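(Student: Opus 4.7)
The plan is to verify directly that $\nu_D$ is a minimizer of $\oJ^Q$ on $\cM(X\setminus D)$ for $Q=U'_{\nu_D}$, and then invoke the uniqueness of the minimizer in Theorem~\ref{thm-log-Q}. First I would check that $Q=U'_{\nu_D}$ is indeed a continuous external field so that Theorem~\ref{thm-log-Q} applies: this is immediate from Proposition~\ref{cor-holder}, since $U_{\nu_D}$ is $1/3$-H\"older, hence so is $U'_{\nu_D}=U_{\nu_D}+\max U'_{\nu_D}$.

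The computational heart of the argument is a direct expansion. For any $\mu\in\cM(X\setminus D)$, set $\sigma:=\mu-\nu_D$, a signed measure of total mass zero. Using additivity $U'_\mu=U'_{\nu_D}+U'_\sigma$ together with the commutation relation~\eqref{commute-potential} (which extends to signed measures by linearity), a short bookkeeping gives
\begin{equation*}
\oJ^Q(\mu)-\oJ^Q(\nu_D)=-\int_X U'_\sigma\,\dd\sigma.
\end{equation*}
Indeed, expanding $\int U'_\mu \, \dd\mu=\int U'_{\nu_D}\, \dd\nu_D+2\int U'_{\nu_D}\, \dd\sigma+\int U'_\sigma\, \dd\sigma$ and $\int U'_{\nu_D}\, \dd\mu=\int U'_{\nu_D}\, \dd\nu_D+\int U'_{\nu_D}\, \dd\sigma$, the cross terms in $-\int U'_\mu\, \dd\mu+2\int Q\,\dd\mu+\int U'_{\nu_D}\, \dd\nu_D-2\int Q\,\dd\nu_D$ cancel exactly, leaving $-\int U'_\sigma\, \dd\sigma$.

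Next I would invoke the classical energy positivity for zero-charge signed measures. Since $\sigma(X)=0$, we have $\ddc U'_\sigma=\sigma$ on $X$, so Stokes' theorem on the compact Riemann surface gives
\begin{equation*}
\int_X U'_\sigma\, \dd\sigma=\int_X U'_\sigma\, \ddc U'_\sigma=-\int_X dU'_\sigma\wedge \dc U'_\sigma\leq 0,
\end{equation*}
valid whenever $U'_\sigma$ has the necessary integrability; if $\mu$ is so singular that $\int U'_\mu\, \dd\mu=-\infty$, then $\oJ^Q(\mu)=+\infty$ and the desired inequality is automatic. In either case we conclude $\oJ^Q(\mu)\geq \oJ^Q(\nu_D)$, so $\nu_D$ achieves the minimum of $\oJ^Q$. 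By the uniqueness of the minimizer asserted in Theorem~\ref{thm-log-Q}, $\nu_Q=\nu_D$.

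The only potential obstacle is the integration-by-parts step, since $\nu_D$ has a singular component $\nu_{\partial D}$ on $\partial D$; however this is handled either by the dichotomy above (if $\int U'_\mu\, \dd\mu=-\infty$) or by standard approximation, so no genuine difficulty arises. As a sanity check, the Euler--Lagrange characterization of Theorem~\ref{thm-log-Q} is automatic here: with $Q=U'_{\nu_D}$ and $\mu=\nu_D$, the function $-U'_\mu+Q$ vanishes identically, so conditions (1) and (2) of Theorem~\ref{thm-log-Q} hold trivially with constant $F=0$, reconfirming that $\nu_D$ is the minimizer.
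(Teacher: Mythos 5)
Your proof is correct, but it takes a genuinely different route from the paper. You verify directly that $\nu_D$ minimizes $\oJ^Q$ by the quadratic expansion
\[
\oJ^Q(\mu)-\oJ^Q(\nu_D)=-\int_X U'_\sigma\,\dd\sigma,\qquad \sigma:=\mu-\nu_D,
\]
and then invoke the energy positivity $\int_X U'_\sigma\,\dd\sigma\leq 0$; note that the latter is already recorded in the paper for sign measures as the second half of~\eqref{commute-potential}, so you could cite that directly rather than re-derive it via Stokes and worry about the approximation step. The paper instead reuses its envelope machinery: it applies Theorem~\ref{thm-upper-envolep} to the competitor $\phi=Q=U'_{\nu_D}$ (which is $\omega$-subharmonic and trivially $\leq Q$ on $X\setminus D$) to get $U'_{\nu_D}\leq U'_{\nu_Q}+\min\oJ^Q-\int U'_{\nu_D}\,\dd\nu_Q$, integrates against $\nu_D$, and cancels the cross terms via~\eqref{commute-potential} to obtain $\oJ^Q(\nu_D)\leq\min\oJ^Q$; combined with the trivial reverse inequality and uniqueness, this gives $\nu_Q=\nu_D$. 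Both arguments are sound; yours is more elementary and self-contained (it bypasses Theorem~\ref{thm-upper-envolep} entirely and uses only the convexity/energy structure), while the paper's is shorter given the envelope theorem has already been established and will be reused elsewhere. Your closing sanity check via the Euler--Lagrange conditions of Theorem~\ref{thm-log-Q} is consistent: with $\nu_Q=\nu_D$ the constant $\min\oJ^Q-\int Q\,\dd\nu_Q$ does indeed equal $0$.
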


\begin{proof}
Since $\supp(\nu_D)\subset X\setminus D$,  integrating the first inequality in Theorem~\ref{thm-log-Q} against $\nu$, we get
$$-\int U'_{\nu_Q}\,\dd \nu_D +\int U'_{\nu_D} \,\dd \nu_D  \geq \min\oJ^Q - \int_X U'_{\nu_D}\,\dd \nu_Q.  $$
Using \eqref{commute-potential}, we obtain $\min\oJ^Q\leq \int U'_{\nu_D} \,\dd \nu_D$.

On the other hand, $Q$ itself is an $\omega$-subharmonic function and is bounded from above by $Q$ on $X\setminus D$.  By Theorem~\ref{thm-upper-envolep}, we have 
$$
U'_{\nu_D}
\leq 
U'_{\nu_Q} + \min\oJ^Q - \int_X U'_{\nu_D}\,\dd \nu_Q.    $$
Integrating this inequality against $\nu_D$, we get $\int U'_{\nu_D} \,\dd \nu_D\leq \min \oJ^Q$. By the uniqueness of  the minimizer of the functional $\oJ^Q$ (see Theorem~\ref{thm-log-Q}), we conclude that $\nu_Q=\nu_D$.
\end{proof}

The functional $\oE_m(\mathbf p)$ can be viewed as a discrete version of $\int U'_\mu \,\dd\mu$. Thus,
 the functionals $-\oE_m(\mathbf p) +2 \max U'_{\delta_{\mathbf p}}, \oK^Q_m (\mathbf p)$ are   discrete versions of $\oI_D(\mu), \oJ^Q (\mu)$ respectively.  The advantage of introducing $\oJ^Q$ is that under the setting of complex plane $\C$,   $\oJ^Q$ was well-studied, see e.g.\ \cite{log-book}.   Thus, using Proposition \ref{vq=vd}, we will find a $\mathbf p$ satisfying \eqref{exist-p}, from the Fekete points of $\oK^Q_m (\mathbf p)$.

\medskip

From now on, we fix the external field $Q:=U'_{\nu_D}$, which is $1/3$-H\"older continuous by Proposition~\ref{cor-holder}. Hence Proposition~\ref{prop-fekete-seperate} guarantees that distinct Fekete points 
$f_{m, 1}, \dots, f_{m, 
 m}$  are separated for every large $m\gg 1$,
 with a distance estimate $\dist(f_{m, j_1},  f_{m, j_2} )\geq c_Q m^{-3}$ for $j_1\neq j_2$ (we still use $c_Q$ for convenience in this section, which is depending on $D$).

\medskip

The following probability measure $\eta$  will be used two times later.

\begin{lemma}\label{lem-eta}
Let $E$ be a subset  of $X$ (not necessary open) with with finitely many connected components, whose   boundary $\overline{E}\setminus  \mathrm{int}(E)$ consists of disjoint smooth curves. Then there exists a probability measure $\eta$ supported on $\overline{E}$, such that $U'_\eta$ is  constant on $\overline {E}$ and attends its minimum on $\overline{E}$.
\end{lemma}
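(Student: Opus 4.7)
The plan is to construct $\eta$ explicitly by solving a Poisson--Dirichlet problem attached to $\overline E$. First I would produce a continuous function $u\colon X\to \R$ with $u\equiv 0$ on $\overline E$, and with $u|_{X\setminus \overline E}$ being the unique solution to the Dirichlet problem $\ddc u=-\omega$ on each connected component of $X\setminus\overline E$ with vanishing boundary values on $\partial E$. Since $\partial E$ is the disjoint union of finitely many smooth closed curves, every boundary point is regular, so existence, uniqueness, and continuity up to $\partial E$ of $u$ follow from classical elliptic theory. Applying the minimum principle to the strictly superharmonic $u$ on $X\setminus \overline E$ with zero boundary values, I obtain $u>0$ strictly in $X\setminus \overline E$, hence $u\geq 0$ on $X$ with equality exactly on $\overline E$.

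Next, I would set $\eta := \ddc u+\omega$, a priori a current on $X$, and verify that it is a probability measure supported on $\overline E$. Stratifying: on $\mathrm{int}(E)$ one has $u\equiv 0$ so $\ddc u=0$, giving $\eta=\omega|_{\mathrm{int}(E)}$; on $X\setminus \overline E$ one has $\ddc u=-\omega$, giving $\eta=0$ there; and along $\partial E$ the distributional ``jump'' of $\ddc u$ produces a singular measure $\sigma$, positive because the outward normal derivative of $u$ on $\partial E$ is nonnegative ($u$ is identically $0$ on the $E$-side and strictly positive on the $X\setminus\overline E$-side). Hence $\eta$ is a positive measure supported on $\overline E$; Stokes' formula gives $\int_X \ddc u=0$, whence $\eta(X)=\omega(X)=1$, so $\eta\in\cM(\overline E)$.

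Finally, I would identify $u$ with $U'_\eta$ up to an additive constant. Both are continuous functions on $X$ satisfying $\ddc(\,\cdot\,)=\eta-\omega$, so $u-U'_\eta$ is harmonic on the closed surface $X$ and therefore constant; denoting this constant by $-c_0$, we get $U'_\eta=u+c_0$ on $X$. Since $u\equiv 0$ on $\overline E$, this yields $U'_\eta\equiv c_0$ on $\overline E$, which is the asserted constancy; and since $u\geq 0$ on $X$ with equality precisely on $\overline E$, we get $U'_\eta\geq c_0$ on $X$ with equality precisely on $\overline E$, so $U'_\eta$ attains its global minimum on $\overline E$.

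The only nontrivial input is the solvability and continuous boundary regularity for the Dirichlet problem defining $u$, which is standard under the smooth boundary hypothesis on $\partial E$; the remaining verifications reduce to Stokes' theorem and the minimum principle, so no serious obstacle is anticipated.
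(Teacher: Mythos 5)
Your argument is correct and gives a self-contained, PDE-flavored alternative to the paper's proof. The paper instead defines the $\omega$-extremal function $\Psi := \sup\{\phi\ \omega\text{-subharmonic}:\ \phi \le 0\ \text{on } E\}^*$, sets $\eta := \ddc\Psi + \omega$, and then outsources the verification of the stated properties to \cite[Theorem 21]{zei-zel-imrn}, using the smooth-boundary hypothesis only to guarantee continuity of $U'_\eta$. Under that same hypothesis the two constructions produce the \emph{same} function: the upper envelope $\Psi$ vanishes on $\overline E$, is $\ge 0$ (since $0$ is a competitor), and by maximality solves $\ddc\Psi = -\omega$ on $X\setminus\overline E$, which is exactly the characterization you impose on $u$. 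So you have effectively rediscovered the extremal function by direct Dirichlet-problem arguments: existence and boundary regularity from elliptic theory, strict positivity off $\overline E$ from the strong minimum principle, positivity of the singular boundary measure from the normal-derivative jump (a Hopf-type observation), total mass one from Stokes, and identification $U'_\eta = u + c_0$ from harmonicity of their difference on the compact $X$. What your route buys is transparency and self-containment — every property is verified from first principles rather than cited — at the cost of requiring enough boundary regularity to solve the Dirichlet problem classically; the paper's envelope formulation is more robust (it makes sense for arbitrary compact $E$, where $\eta$ still exists but $U'_\eta$ need not be continuous) and connects directly to the equilibrium-measure literature, which is why they also record that $\eta$ minimizes $\mu \mapsto -\int_X U'_\mu\,\dd\mu$ over $\cM(\overline E)$. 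One small point worth making explicit in your write-up: each connected component of $X\setminus\overline E$ has nonempty boundary because $X$ is connected and $\overline E \neq \varnothing$, so the Dirichlet problem is genuinely well-posed on every component.
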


\begin{proof}
Consider
$$
\Psi:=\sup_\phi \big\{ \phi \,\, \text{is}\,\,\omega\text{-subharmonic}:\, \phi \leq 0 \,\text{ on }\, E   \big\}^*.$$
Let $\eta:=\ddc \Psi+\omega$. The remaining proof goes the same as~\cite[Theorem 21]{zei-zel-imrn}.  This $\eta$ is also the minimizer of the functional
$$\mu \mapsto  -\int_X U'_\mu \,\dd  \mu  \quad\text{for}\quad  \mu\in \mathcal{M}(\overline {E}). $$
The assumptions on the boundary of $E$ is to ensure the continuity of $U'_\eta$.
\end{proof}

Let $\sigma_{m,j}$ be the probability  measure on $\partial \B(f_{m,j}, c_Q m^{-5})$ induced by arc lengths. Define the probability measure 
$$\sigma_m:= {1\over m}\sum_{j=1}^m \sigma_{m,j}.$$
We will use $\sigma_m$  to approximate $\delta_{\mathbf f_m}$, because $\int U'_{\delta_{\mathbf f_m}}\,\dd \delta_{\mathbf f_m}=-\infty$, while $\int U'_{\sigma_m}\,\dd \sigma_m>-\infty$.

\begin{lemma}\label{U-tau-U-f}
There exists some constant $C_D >0$, such that for all sufficiently large $m\gg 1$,
for any $w\in \partial \B(f_{m,j}, c_Q m^{-5})$, one has
$$ \big| U'_{\sigma_m}(w)-U'_{\mathbf f_m}(w) \big|\leq C_D \log m/m. $$  
\end{lemma}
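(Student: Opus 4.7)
The plan is to use the Green function decomposition $G(x,y)=\log\dist(x,y)+\varrho(x,y)$ from Lemma~\ref{l:Green} (with $\varrho$ Lipschitz on $X\times X$) together with the Fekete separation $\dist(f_{m,j_1},f_{m,j_2})\geq c_Q m^{-3}$ from Proposition~\ref{prop-fekete-seperate} (applied with $\gamma=1/3$), and the classical circular-average identity for $\log|\cdot|$ from Euclidean potential theory. Expanding by definition,
\begin{equation*}
U'_{\sigma_m}(w)-U'_{\delta_{\mathbf f_m}}(w)
=\frac{1}{m}\sum_{k=1}^m\Bigl(\int_X G(w,y)\,\dd\sigma_{m,k}(y)-G(w,f_{m,k})\Bigr),
\end{equation*}
and I would split this sum into the diagonal term $k=j$ and the off-diagonal terms $k\neq j$, estimating them separately.

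First, I would handle the off-diagonal terms. For $k\neq j$ the Fekete separation combined with $\dist(w,f_{m,j})=c_Qm^{-5}$ gives $\dist(w,f_{m,k})\geq c_Q m^{-3}-c_Q m^{-5}\gtrsim m^{-3}$, and for any $y\in\partial\B(f_{m,k},c_Q m^{-5})$ the triangle inequality yields $|\dist(w,y)-\dist(w,f_{m,k})|\leq c_Q m^{-5}$. The mean value theorem then gives $|\log\dist(w,y)-\log\dist(w,f_{m,k})|\lesssim m^{-5}/m^{-3}=m^{-2}$, while Lipschitz continuity of $\varrho$ gives $|\varrho(w,y)-\varrho(w,f_{m,k})|\lesssim m^{-5}$. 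Summing the $m-1$ contributions and dividing by $m$, the off-diagonal total is $O(m^{-2})$.

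Next I would treat the term $k=j$. Choose a holomorphic chart $\psi$ sending a neighborhood of $f_{m,j}$ to a neighborhood of $0\in\C$, so that $\dist(x,y)=|\psi(x)-\psi(y)|\,e^{h(x,y)}$ with $h$ smooth and bounded near the diagonal. In these coordinates, $\partial\B(f_{m,j},c_Q m^{-5})$ is a Jordan curve bi-Lipschitz-equivalent (uniformly in $m$) to the Euclidean circle of radius $\sim m^{-5}$, and $\sigma_{m,j}$ pulls back to a measure comparable to normalized arc length on that Euclidean circle. The classical identity $\frac{1}{2\pi}\int_0^{2\pi}\log|re^{i\theta}-w|\,\dd\theta=\log r$ valid for $|w|=r$, combined with this bi-Lipschitz comparison, yields
\begin{equation*}
\int_X\log\dist(w,y)\,\dd\sigma_{m,j}(y)=\log(c_Q m^{-5})+O(1),
\end{equation*}
while $\log\dist(w,f_{m,j})=\log(c_Q m^{-5})$ exactly. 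The Lipschitz contribution of $\varrho$ is $O(m^{-5})$, so the $k=j$ bracket is $O(1)$, contributing $O(1/m)$ after division by $m$.

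Combining the two cases yields $|U'_{\sigma_m}(w)-U'_{\delta_{\mathbf f_m}}(w)|=O(1/m)$, which is majorized by $C_D\log m/m$ for a suitable $C_D$, so the stated bound follows with slack. The \emph{main technical obstacle} is the diagonal estimate: one must pass from the intrinsic Riemannian arc-length measure $\sigma_{m,j}$ to its Euclidean counterpart in a uniformizing chart and control all the resulting distortions uniformly in $m$ (using the smoothness of $h$ near the diagonal). The off-diagonal piece, by contrast, is essentially routine thanks to the polynomial Fekete separation and the Lipschitz regularity of $\varrho$.
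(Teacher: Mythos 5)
Your proof follows essentially the same route as the paper's: the Green-function decomposition $G=\log\dist+\varrho$, the diagonal/off-diagonal split, the Fekete separation $\dist(f_{m,j_1},f_{m,j_2})\gtrsim m^{-3}$ from Proposition~\ref{prop-fekete-seperate} to control the off-diagonal terms, and the circular-average identity $\tfrac{1}{2\pi}\int_0^{2\pi}\log|re^{i\theta}-w|\,\dd\theta=\log r$ for the diagonal term. The only (harmless) difference is that you exploit the cancellation $\int\log\dist(w,\cdot)\,\dd\sigma_{m,j}\approx\log(c_Q m^{-5})=\log\dist(w,f_{m,j})$ directly, making the diagonal bracket $O(1)$ and the total $O(1/m)$, whereas the paper splits by the triangle inequality and settles for an $O(\log m)$ diagonal bracket; both comfortably give the stated $O(\log m/m)$ bound.
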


\begin{proof}
We may assume $w\in \partial \B(f_{m,1}, c_Q m^{-5})$.
The argument is by straightforward computation, using~\eqref{defn-potentil-type-I} and Lemma~\ref{l:Green}, $\big| U'_{\sigma_m}(w)-U'_{\mathbf f_m}(w) \big|$ is bounded by
$$\Big| \int \log\dist( w,x )\, \dd(\sigma_m -\mathbf f_m) (x) \Big|  +\Big|\int \varrho(w,x)\, \dd(\sigma_m -\mathbf f_m) (x)  \Big|. $$
Since $\varrho$ is Lipschitz, the second term is $O(m^{-5})$. By definition,  the first term is bounded by
$$ {1\over m} \sum_{\ell=1}^m  \int  \big|  \log\dist( w,x )   -\log\dist(w,f_{m,\ell}) \big|  \, \dd\sigma_{m,\ell}(x).$$
When $\ell \neq 1$, recall the support of $\sigma_{m,\ell}$ is $\partial \B(f_{m,\ell}, c_Q m^{-5})$ and $\dist(w,f_{m,\ell})>c_Qm^{-3}/2$.  By the monotonicity of the $\log$ function, for $x\in \supp(\sigma_{m,\ell})$,
\begin{align*}
\big|  \log\dist( w,x ) &  -\log\dist(w,f_{m,\ell}) \big| \leq \\
\max \Big( &\log{ \dist(w,f_{m, \ell})\over  \dist(w,f_{m, \ell})- c_Q m^{-5} } \,,\, \log{ \dist(w,f_{m, \ell})+ c_Q m^{-5}\over  \dist(w,f_{m, \ell}) }  \Big)
= {O(1)\over m^2}.
\end{align*}
It remains to estimate the first term with $\ell=1$. Since $\sigma_{m,1}$ is the probability measure on $\B(f_{m,1}, c_Q m^{-5})$ induced by arc length, we have 
\begin{align*}\int \big| \log\dist( w,x ) -\log\dist(w, f_{m,1}) \big| \, \dd \sigma_{m,1} (x) &\leq   \int \big| \log\dist( w,x )\big|  \, \dd \sigma_{m,1} (x)+\big|\log (c_Q m^{-5})\big|,
\end{align*}
which is $O(\log m)$ by the singular integration formula
$$ \int    \log |w-z|  \,\dd \Leb_{\partial \D(0, c_Q m^{-5})}(z)  =\log (c_Q m^{-5})   \quad\text{for}\quad z\in\partial \D(0, c_Q m^{-5}), $$
where $\Leb_{\partial \D(0, c_Q m^{-5})}$ is the Lebesgue probability measure on $\partial \D(0, c_Q m^{-5})$.
The proof of the lemma is finished.
\end{proof}

\begin{lemma}
    \label{lem-bound-nu-sigma}
There exists a constant $C_D>0$ independent of $m$ such that
$$ \int U'_{\nu_D}\,\dd \nu_D  -   \int  U'_{\nu_D}  \,\dd  \delta_{\mathbf f_m}  \geq  -C_D {\log m \over m}$$
for all sufficiently large $m\gg 1$.
\end{lemma}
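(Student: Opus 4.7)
The plan is to exploit the Fekete minimization of $\oK_m^Q$ (for $Q=U'_{\nu_D}$) in tandem with a regularization of $\mathbf f_m$ by $\sigma_m$, bridging the discrete and continuous settings via the $\oJ^Q$-minimality of $\nu_D$ (Proposition~\ref{vq=vd}) and the H\"older regularity of $U'_{\nu_D}$ (Proposition~\ref{cor-holder}).

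First, I would test the Fekete inequality $\oK_m^Q(\mathbf f_m)\leq \oK_m^Q(\mathbf p)$ against a random $\mathbf p=(p_1,\dots,p_m)$ of i.i.d.\ samples from $\nu_D$. Since $\supp(\nu_D)\subset X\setminus D$, such a $\mathbf p$ lies in $(X\setminus D)^{(m)}$ almost surely. Using the symmetry of $\oE_m$ together with the identity $\int\!\int G\,\dd\nu_D\,\dd\nu_D=\int U'_{\nu_D}\,\dd\nu_D$ from \eqref{commute-potential}, a direct expectation computation gives $\mathbb E[\oK_m^Q(\mathbf p)]=\int U'_{\nu_D}\,\dd\nu_D$. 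Unfolding the definition of $\oK_m^Q$ then yields the first key inequality
\[
2\int U'_{\nu_D}\,\dd\delta_{\mathbf f_m}\,-\,\tfrac{m}{m-1}\oE_m(\mathbf f_m)\,\leq\,\int U'_{\nu_D}\,\dd\nu_D.
\]

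Second, I would upper bound the Fekete energy $\oE_m(\mathbf f_m)$ via $\sigma_m$. A direct singular-integral computation, in the spirit of Lemma~\ref{U-tau-U-f} and using that each $\sigma_{m,j}$ is the uniform probability on the circle of radius $c_Q m^{-5}$ about $f_{m,j}$ (together with the separation $\dist(f_{m,j_1},f_{m,j_2})\geq c_Q m^{-3}$ from Proposition~\ref{prop-fekete-seperate}), yields the regularization identity
\[
\int U'_{\sigma_m}\,\dd\sigma_m\,=\,\oE_m(\mathbf f_m)\,-\,\tfrac{5\log m}{m}\,+\,O(1/m).
\]
Applying the $\oJ^Q$-minimality $\oJ^Q(\sigma_m)\geq\oJ^Q(\nu_D)=\int U'_{\nu_D}\,\dd\nu_D$ to $\sigma_m$, and using the $1/3$-H\"older regularity of $U'_{\nu_D}$ to estimate $\int U'_{\nu_D}\,\dd\sigma_m=\int U'_{\nu_D}\,\dd\delta_{\mathbf f_m}+O(m^{-5/3})$, one then obtains the second key inequality
\[
\oE_m(\mathbf f_m)\,\leq\,2\int U'_{\nu_D}\,\dd\delta_{\mathbf f_m}\,-\,\int U'_{\nu_D}\,\dd\nu_D\,+\,O(\log m/m).
\]

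Finally, combining the two inequalities should yield the desired bound after careful algebraic manipulation. The main obstacle is that a naive elimination of $\oE_m(\mathbf f_m)$ between them produces only a lower bound on $\int U'_{\nu_D}\,\dd\delta_{\mathbf f_m}$; to extract the required upper bound one must sharpen either of the two inputs. A natural route is to also test the Fekete inequality against a deterministic partition-based configuration $\tilde{\mathbf p}$ with $\nu_D(V_j)=1/m$ and $\tilde p_j\in V_j$ chosen so that $\frac{1}{m}\sum U'_{\nu_D}(\tilde p_j)=\int U'_{\nu_D}\,\dd\nu_D$ exactly (which exists by the intermediate value theorem applied to the continuous $U'_{\nu_D}$, the fine structure of $\nu_D$ on $\partial D$ being controlled by Proposition~\ref{prop-regularity}), whose Fekete energy $\oE_m(\tilde{\mathbf p})=\int U'_{\nu_D}\,\dd\nu_D+O(\log m/m)$ can be computed via the standard log-singularity asymptotics. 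Together with the upper bound from the $\oJ^Q$-minimality and appropriate use of the $\oI_D$-minimality of $\nu_D$ to control $\max U'_{\sigma_m}-\max U'_{\nu_D}$, this yields the estimate $\int U'_{\nu_D}\,\dd\delta_{\mathbf f_m}\leq \int U'_{\nu_D}\,\dd\nu_D+C_D\log m/m$, equivalent to the stated lemma.
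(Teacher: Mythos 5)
There is a genuine gap: the two key inequalities you derive cannot be combined to produce the required upper bound on $\int U'_{\nu_D}\,\dd\delta_{\mathbf f_m}$, and your proposed repair does not fix this. Your first inequality reads
$-\tfrac{m}{m-1}\oE_m(\mathbf f_m)+2\int U'_{\nu_D}\,\dd\delta_{\mathbf f_m}\leq\int U'_{\nu_D}\,\dd\nu_D$, and your second reads
$\oE_m(\mathbf f_m)\leq 2\int U'_{\nu_D}\,\dd\delta_{\mathbf f_m}-\int U'_{\nu_D}\,\dd\nu_D+O(\log m/m)$. Substituting the second into the first, the coefficient of $\int U'_{\nu_D}\,\dd\delta_{\mathbf f_m}$ on the left becomes $2-\tfrac{2m}{m-1}=-\tfrac{2}{m-1}$, which is tiny and of the wrong sign; after dividing through by it one multiplies the error by a factor of order $m$ and obtains only the useless bound $\int U'_{\nu_D}\,\dd\delta_{\mathbf f_m}\geq \tfrac12\int U'_{\nu_D}\,\dd\nu_D-O(\log m)$. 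Testing the Fekete inequality against a deterministic partition-based configuration $\tilde{\mathbf p}$ instead of i.i.d.\ samples from $\nu_D$ does not change the coefficient $-\tfrac{m}{m-1}$ on $\oE_m(\mathbf f_m)$, so the same cancellation and sign flip occurs and the combination still fails. (Your closing appeal to the $\oI_D$-minimality of $\nu_D$ to control $\max U'_{\sigma_m}-\max U'_{\nu_D}$ is also pointing the wrong way: that minimality only bounds $\max U'_{\sigma_m}$ from below, and in any case the lemma makes no reference to $\max U'$.) There is also a secondary subtlety: your use of $\oJ^Q(\sigma_m)\geq\oJ^Q(\nu_D)$ requires $\sigma_m\in\cM(X\setminus D)$, i.e.\ that none of the small circles $\partial\B(f_{m,j},c_Qm^{-5})$ dips into $D$, which is not guaranteed when a Fekete point sits on $\partial D$.

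The paper avoids this coefficient obstruction by working at the pointwise, not the integrated, level of the Fekete extremality. Starting from the pointwise inequality~\eqref{U-fm-U-nu-Q}, one integrates against $\nu_D$ to reduce to a lower bound on $U'_{\nu_D}(f_{m,1})-U'_{\delta_{\mathbf f'_m}}(f_{m,1})$; this is compared, via the H\"older estimate and Lemma~\ref{U-tau-U-f}, to the quantity $U'_{\nu_D}(w)-U'_{\sigma_m}(w)$ for $w$ on the small circle $\partial\B(f_{m,1},c_Qm^{-5})$. The crucial step that closes the argument is then to integrate this pointwise inequality over the equilibrium measure $\eta$ of Lemma~\ref{lem-eta} for $E=\cup_j\partial\B(f_{m,j},c_Qm^{-5})$, and apply~\eqref{commute-potential}: since $U'_\eta$ is constant on $E$ (the support of $\sigma_m$) and attains its global minimum there, one has $\int U'_\eta\,\dd\nu_D\geq\int U'_\eta\,\dd\sigma_m$ automatically, with the right sign. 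This equilibrium-measure trick is the missing idea; no purely algebraic combination of the global Fekete minimality and the $\oJ^Q$-minimality of $\nu_D$ will produce it.
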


\begin{proof}
By integrating~\eqref{U-fm-U-nu-Q} against $\nu_D=\nu_Q$,  and by using~\eqref{commute-potential}, we obtain
$$ \int U'_{\nu_D} \,\dd\delta_{\mathbf f'_m} -    U'_{\delta_{\mathbf f'_m}} (f_{m, 1}) \leq  \int U'_{\nu_D} \,\dd \nu_D -U'_{\nu_D}(f_{m, 1}).     $$
Hence
\begin{equation}
    \label{step 1 estimate}
\int U'_{\nu_D}\,\dd \nu_D  -   \int  U'_{\nu_D}  \,\dd  \delta_{\mathbf f_m}  \geq  
U'_{\nu_D}(f_{m, 1}) -U'_{\delta_{\mathbf f'_m}} (f_{m, 1}) + \vep_{m,D},  
\end{equation}
where
$$\vep_{m,D}:= \int U'_{\nu_D} \,\dd\delta_{\mathbf f'_m}-  \int  U'_{\nu_D}  \,\dd  \delta_{\mathbf f_m}
=
\frac{1}{m(m-1)}
\cdot \sum_{j=2}^m
U'_{\nu_D}(f_{m, j})
-\frac{1}{m}
\cdot
U'_{\nu_D}(f_{m, 1}) $$
clearly satisfies that $|\vep_{m,D}|\leq 2\cdot m^{-1} \cdot \max |U'_{\nu_D}|= O(m^{-1})$. 
It remains to show that $U'_{\nu_D}(f_{m, 1}) -U'_{\delta_{\mathbf f'_m}} (f_{m, 1})\geq -C_D \log m / m$. 

Let $w\in X$ be a point near $f_{m, 1}$ with $\dist(w,f_{m, 1})= c_Q m^{-5}$. Using~\eqref{defn-potentil-type-I} and  Lemma~\ref{l:Green}, we can bound $\big| U'_{\delta_{\mathbf f'_m}} (f_{m, 1})  - U'_{\delta_{\mathbf f_m}} (w) \big|$ from above by the sum of 
\begin{equation} \label{sum-dist-term}
{1\over m}\big| \log\dist(w,f_{m, 1})\big| + 
\sum_{j=2}^m \Big|
{1\over m}\log \dist(w,f_{m, j})
-
{1\over m-1}\log \dist(f_{m, 1},f_{m, j})
\Big|
\end{equation}
and 
\begin{equation} \label{sum-rho-term}
{1\over m}\big|\varrho(w,f_{m, 1})\big|+\sum_{j=2}^m \Big| 
 {1\over m}\varrho(w,f_{m, j}) - 
 {1\over m-1}\varrho (f_{m, 1},f_{m, j})
 \Big|.
\end{equation}
Since $\varrho$ is Lipschitz, \eqref{sum-rho-term} is  $ O(m^{-1})$.   By the monotonicity of the $\log$ function,
\begin{align}
\big| \log\dist(f_{m, 1},f_{m, j}) &-\log\dist(w,f_{m, j})       \big|  \leq 
\nonumber
\\
\label{log trick}
\max \Big( &\log{ \dist(f_{m, 1},f_{m, j})\over  \dist(f_{m, 1},f_{m, j})- c_Q m^{-5} } \,,\, \log{ \dist(w,f_{m, j})+ c_Q m^{-5}\over  \dist(w,f_{m, j}) }  \Big)
= {O(1)\over m^2},  \end{align}
because $\dist(f_{m, 1},f_{m, j})\geq c_Q m^{-3}$ by Proposition~\ref{prop-fekete-seperate}. Hence \eqref{sum-dist-term} is $ O(\log m /m)$.
Summarizing, 
we have
\begin{equation}
    \label{computations w}
\big| U'_{\delta_{\mathbf f'_m}} (f_{m, 1})  - U'_{\delta_{\mathbf f_m}} (w) \big|= O(\log m/m).
\end{equation}

On the other hand, the $1/3$-H\"older continuity of $U'_{\nu_D}$ yields
$$\big|U'_{\nu_D}(f_{m, 1})-   U'_{\nu_D}(w)\big|
=
O(
m^{-5/3})< 1/m$$
for large $m\gg 1$. Thus we conclude that 
$$   U'_{\nu_D}(f_{m, 1}) -U'_{\delta_{\mathbf f'_m}} (f_{m, 1}) \geq U'_{\nu_D}(w)- U'_{\delta_{\mathbf f_m}}(w)-O(\log m/m)$$
by the above two estimates. 
Hence~\eqref{step 1 estimate} and Lemma \ref{U-tau-U-f} implies that
\begin{equation}
    \label{key inequality about w}
\int U'_{\nu_D}\,\dd \nu_D  -   \int  U'_{\nu_D}  \,\dd  \delta_{\mathbf f_m} \geq U'_{\nu_D}(w)- U'_{\sigma_m}(w)-O(\log m/m).
\end{equation}
By repeating the same argument, or by symmetry, it is clear that~\eqref{key inequality about w} 
 holds for any  $w\in\partial\B(f_{m, j},c_Q m^{-5})$, $j=1, \dots, m$.

 Now take the probability measure $\eta$ in Lemma~\ref{lem-eta} for $E:=\cup_{j=1}^m \partial \B(f_{m, j},c_Q m^{-5})$.
 Integrating~\eqref{key inequality about w}  against $\eta$, we obtain
\begin{align*}
\int U'_{\nu_D}\,\dd \nu_D  -   \int  U'_{\nu_D}  \,\dd  \delta_{\mathbf f_m}  &\geq \int U'_{\nu_D} \,\dd\eta- \int U'_{\sigma_m} \,\dd\eta-O(\log m/m) \\  
\text{[use \eqref{commute-potential}]}
\qquad
&=\int U'_{\eta} \,\dd\nu_D- \int U'_{\eta} \,\dd\sigma_m-O(\log m/m).
\end{align*}
By Lemma~\ref{lem-eta}, $U'_\eta$ equals constantly to its minimum on $E$. Hence $\int U'_{\eta} \,\dd\nu_D\geq \int U'_{\eta} \,\dd\sigma_m$, and   the proof is done.
\end{proof}

\begin{lemma}\label{lem-smooth-sigma}
There exists a constant $C_D>0$  such that, for all sufficiently large $m$, one has  $$    \Big|  \int  U'_{\nu_D}  \,\dd \sigma_m-\int  U'_{\nu_D}  \,\dd  \delta_{\mathbf f_m} \Big|   \leq {C_D \over m} $$ and 
$$
 \oE_m (\mathbf f_m)- \int U'_{\sigma_m}\,\dd\sigma_m \leq C_D {\log m  \over m}.
$$
\end{lemma}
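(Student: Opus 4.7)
The first inequality follows essentially from the H\"older regularity of $U'_{\nu_D}$ established in Proposition~\ref{cor-holder}. Indeed, $\delta_{\mathbf f_m}$ and $\sigma_m$ are both averages of $m$ probability measures, the $j$-th components being $\delta_{f_{m,j}}$ and $\sigma_{m,j}$ respectively, where $\sigma_{m,j}$ is supported on the tiny circle $\partial\mathbf{B}(f_{m,j},c_Q m^{-5})$. For each $j$, applying $1/3$-H\"older continuity of $U'_{\nu_D}$ on that circle gives
\[
\Big|\int U'_{\nu_D}\,\dd\sigma_{m,j}-U'_{\nu_D}(f_{m,j})\Big|
\;\leq\; \|U'_{\nu_D}\|_{\Cc^{1/3}}\cdot (c_Q m^{-5})^{1/3}\;=\;O(m^{-5/3}),
\]
and averaging over $j$ yields a bound of order $m^{-5/3}\ll m^{-1}$.

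For the second inequality, the plan is a direct expansion of $\int U'_{\sigma_m}\,\dd\sigma_m$ against itself, split along the diagonal $j_1=j_2$ and off-diagonal $j_1\neq j_2$. Using the decomposition $G(x,y)=\log\dist(x,y)+\varrho(x,y)$ from Lemma~\ref{l:Green} with $\varrho$ Lipschitz, the off-diagonal part is handled by the same logarithm-monotonicity trick used in~\eqref{log trick}: since $\dist(f_{m,j_1},f_{m,j_2})\geq c_Q m^{-3}$ by Proposition~\ref{prop-fekete-seperate} and the circles have radius $c_Q m^{-5}$, for any $x\in\supp\sigma_{m,j_1}$ and $y\in\supp\sigma_{m,j_2}$,
\[
\big|\log\dist(x,y)-\log\dist(f_{m,j_1},f_{m,j_2})\big|\;=\;O(m^{-2}),
\]
and combining with the Lipschitz bound on $\varrho$ gives
\[
\frac{1}{m^2}\sum_{j_1\neq j_2}\!\!\int\!\!\int G\,\dd\sigma_{m,j_1}\dd\sigma_{m,j_2}
\;=\;\oE_m(\mathbf f_m)+O(m^{-2}).
\]

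The diagonal contribution is where the $\log m/m$ appears. For each fixed $j$, the classical circle integral $\int_{\partial\mathbb{D}(0,r)}\log|w-z|\,\dd\Leb(z)=\log r$ for $w\in\partial\mathbb{D}(0,r)$, together with boundedness of $\varrho$, yields after pulling back to a local chart near $f_{m,j}$:
\[
\int\!\!\int G(x,y)\,\dd\sigma_{m,j}(x)\,\dd\sigma_{m,j}(y)\;=\;\log(c_Q m^{-5})+O(m^{-5}).
\]
Summing with the prefactor $1/m^2$ contributes $m^{-1}\big(-5\log m+O(1)\big)$. Adding the two pieces,
\[
\int U'_{\sigma_m}\,\dd\sigma_m\;=\;\oE_m(\mathbf f_m)-\frac{5\log m}{m}+O(m^{-1}),
\]
which rearranges to the desired inequality.

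The only mild subtlety is making the chart-dependent statement ``circle integral equals $\log r$'' precise on the Riemann surface: for $m$ large the ball $\mathbf{B}(f_{m,j},c_Q m^{-5})$ sits inside a uniform-size holomorphic chart around $f_{m,j}$, and the pullback of $\dist$ differs from the Euclidean distance by a bi-Lipschitz factor, contributing only an additive $O(m^{-5})$ term inside the $\log$. This step is routine but is the one point where care is needed; everything else is bookkeeping.
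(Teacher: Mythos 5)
Your proof follows the same route as the paper's: the $1/3$-H\"older bound on $U'_{\nu_D}$ handles the first inequality, and the Green-kernel decomposition $G=\log\dist+\varrho$ split into diagonal ($j_1=j_2$) and off-diagonal ($j_1\neq j_2$) parts, together with the circle-integral identity and the $m^{-3}$ separation of Fekete points, handles the second. One minor imprecision worth flagging: your diagonal estimate should read $\int\!\!\int G\,\dd\sigma_{m,j}\,\dd\sigma_{m,j}=\log(c_Q m^{-5})+O(1)$ rather than $+O(m^{-5})$, since the $\varrho$-term contributes a bounded but generally nonzero constant (and a generic holomorphic chart only guarantees bi-Lipschitz control of $\dist$, not distortion that shrinks with the radius); this does not affect the conclusion, as the $O(1)$ is swallowed into $O(1/m)$ after dividing by $m^2$ and summing over the $m$ diagonal terms.
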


\begin{proof}
 Since $U'_{\nu_D}$ is $1/3$-H\"older continuous with the constant $A:=\norm{U'_{\nu_D}}_{\Cc^{1/3}}$, we have
$$\Big|  \int  U'_{\nu_D}  \,\dd \sigma_m-\int  U'_{\nu_D}  \,\dd  \delta_{\mathbf f_m} \Big|  \leq {1\over m}\sum_{j=1}^m           \Big|  \int  U'_{\nu_D}  \,\dd \sigma_{m,j}-  U'_{\nu_D} (f_{m, j})  \Big| \leq  A \Big({c_Q \over 2m^{5}}\Big)^{1/3},$$
which implies the first inequality.

For the second one, we use Lemma~\ref{l:Green} to rewrite (see~\eqref{define E(P)})
$$\oE_m(\mathbf f_m)= {1\over m^2}\sum_{j\neq k} \Big(\log \dist(f_{m, j},f_{m,k}) +\varrho (f_{m, j},f_{m,k})\Big), $$
and 
$$  \int U'_{\sigma_m}\,\dd\sigma_m={1\over m^2}\sum_{j,k}\int \int \Big(\log\dist(x,y)+\varrho(x,y)  \Big)\,\dd\sigma_{m,j}(x)\dd \sigma_{m,k}(y).  $$

 Denote  by $\Leb_{\partial\D(0,m^{-5})}$   the probability Lebesgue measure on  $\partial\D(0,m^{-5})$. For every $j$, we can estimate that
\begin{align*}
& \Big| \int \int \Big(\log\dist(x,y)+\varrho(x,y)  \Big)\,\dd\sigma_{m,j}(x)\dd \sigma_{m,j}(y) \Big|  \\
&\leq   O(1)
\int  \int  \big|\log|x-y| \big|\,  \dd  \Leb_{\partial\D(0,m^{-5})}(x) \,\dd  \Leb_{\partial\D(0,m^{-5})}(y)+\max\varrho \\
 &=
    O(\log m) +\max\varrho
 +
 O(1),
\end{align*}
where the last inequality is based on the fact that the singular  integration
$$\int\int \log|x-y| \,  \dd  \Leb_{\partial\D(0,m^{-5})}(x) \,\dd  \Leb_{\partial\D(0,m^{-5})}(y)=\log (m^{-5}).   $$

Therefore, to gain the desired estimate $\oE_m (\mathbf f_m)- \int U'_{\sigma_m}\,\dd\sigma_m\leq C_D \log m /m$, we only need to show, for each pair  
 $j\neq k$, that 
\begin{align*}
\Big(\log\dist(f_{m, j},f_{m,k})+\varrho(f_{m, j},f_{m,k})  \Big)-\int \int \Big(\log\dist(x,y)+\varrho(x,y)  \Big)\,\dd\sigma_{m,j}(x)\dd \sigma_{m,k}(y) 
\end{align*}
 is $O(\log m /m)$. 
 First, observe that
the difference involving $\varrho$ is $O(m^{-5})$, because $\varrho$ is Lipschitz while $\sigma_{m,j}$, $\sigma_{m, k}$ are supported on the small circles around $f_{m, j}$ and $f_{m, k}$ respectively. 
For the remaining terms, by the monotonicity of the $\log$ function, we can show that
\begin{align*}
\Big|\log\dist(f_{m, j},f_{m,k})-\int \int \log\dist(x,y)\,\dd\sigma_{m,j}(x)\dd \sigma_{m,k}(y) \Big|
=
O(m^{-2}),
\end{align*}
 because for any 
$x\in \partial\B(f_{m, j},c_Q m^{-5})$, $y\in \partial\B(f_{m, k},c_Q m^{-5})$, we have
\[
\big|\log\dist(f_{m, j},f_{m,k})-
\log\dist(x,y)  \big| 
=
O(m^{-2})
\]
by the same reasoning as~\eqref{log trick}.

Summarizing all the estimates above, we conclude the proof.
\end{proof}

The following is the main result of this section, giving \eqref{exist-p}.

\begin{proposition}\label{prop-fekete-log-bound}
There exists a constant $C_D>0$,  such that
$$-\oE_m(\mathbf f_m)\leq -\int_X U'_{\nu_D}\,\dd \nu_D+ C_D {\log m\over m}$$
and
$$
 \max U'_{\delta_{\mathbf f_m}} \leq \max U'_{\nu_D}  + C_D{\log m\over m} $$
  for all sufficiently large $m$. 
\end{proposition}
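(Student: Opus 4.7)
The plan is to derive both bounds from the minimality of $\mathbf f_m$ for $\oK^Q_m$, combined with the lemmas already established. The common starting point is a short sampling argument: if $p_1,\dots,p_m$ are independent with common law $\nu_D$, then Fubini together with $\iint G\,d(\nu_D\otimes\nu_D)=\int U'_{\nu_D}\,d\nu_D$ gives
\[
\E\bigl[\oK^Q_m(p_1+\cdots+p_m)\bigr]=-\int U'_{\nu_D}\,d\nu_D+2\int U'_{\nu_D}\,d\nu_D=\min\oJ^Q.
\]
The diagonal event $\{p_j=p_k\}$ has probability zero since $\nu_D$ is atom-free (Proposition~\ref{prop-regularity}), so the minimum of $\oK^Q_m$ does not exceed this expectation. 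Hence by the minimality of $\mathbf f_m$,
\begin{equation}\label{key-master-ineq-plan}
-\tfrac{m}{m-1}\,\oE_m(\mathbf f_m)+2\int U'_{\nu_D}\,d\delta_{\mathbf f_m}\leq \int U'_{\nu_D}\,d\nu_D.
\end{equation}
The first bound is then immediate: inserting $\int U'_{\nu_D}\,d\delta_{\mathbf f_m}\geq \int U'_{\nu_D}\,d\nu_D-C_D\log m/m$ from Lemma~\ref{lem-bound-nu-sigma} into~\eqref{key-master-ineq-plan} and multiplying by $(m-1)/m$ yields $-\oE_m(\mathbf f_m)\leq -\int U'_{\nu_D}\,d\nu_D+O(\log m/m)$, with the $O(1/m)$ correction from $(m-1)/m$ absorbed into the logarithmic error.

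For the second bound, I revisit inequality~\eqref{U-fm-U-nu-Q} and observe that its derivation applies verbatim with any Fekete point $f_{m,j}$ in place of $f_{m,1}$. Averaging the $m$ resulting inequalities over $j$ and using the identities
\[
\tfrac{1}{m}\sum_{j=1}^m U'_{\delta_{\mathbf f_m^{(j)}}}=U'_{\delta_{\mathbf f_m}},\qquad \tfrac{1}{m}\sum_{j=1}^m U'_{\delta_{\mathbf f_m^{(j)}}}(f_{m,j})=\tfrac{m}{m-1}\,\oE_m(\mathbf f_m)
\]
(with $\mathbf f_m^{(j)}:=\mathbf f_m-f_{m,j}$) telescopes the averaged version into the pointwise bound $U'_{\delta_{\mathbf f_m}}(z)\leq U'_{\nu_D}(z)+\alpha_m$ for all $z\in X$, where $\alpha_m:=\tfrac{m}{m-1}\,\oE_m(\mathbf f_m)-\int U'_{\nu_D}\,d\delta_{\mathbf f_m}$. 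Taking the supremum in $z$ reduces the second bound to showing $\alpha_m\leq O(\log m/m)$.

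The main obstacle is controlling $\alpha_m$ from above, since this requires upper estimates on \emph{both} $\oE_m(\mathbf f_m)$ and $\int U'_{\nu_D}\,d\delta_{\mathbf f_m}-\int U'_{\nu_D}\,d\nu_D$, directions opposite to what \eqref{key-master-ineq-plan} alone supplies. The route is: (i) use the second part of Lemma~\ref{lem-smooth-sigma} to get $\oE_m(\mathbf f_m)\leq \int U'_{\sigma_m}\,d\sigma_m+O(\log m/m)$, which also shows that $\oE_m(\mathbf f_m)$ is bounded so the factor $(m-1)/m$ is harmless; (ii) invoke $\oJ^Q(\sigma_m)\geq \min\oJ^Q$, valid for large $m$ since $\sigma_m$ is supported in $X\setminus D$, to obtain $\int U'_{\sigma_m}\,d\sigma_m\leq 2\int U'_{\nu_D}\,d\sigma_m-\int U'_{\nu_D}\,d\nu_D$; (iii) use the first part of Lemma~\ref{lem-smooth-sigma} to swap $\sigma_m$ for $\delta_{\mathbf f_m}$ at cost $O(1/m)$; and (iv) conclude with $\int U'_{\nu_D}\,d\delta_{\mathbf f_m}-\int U'_{\nu_D}\,d\nu_D\leq C_D\log m/m$ from Lemma~\ref{lem-bound-nu-sigma}. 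A minor technical wrinkle---that a Fekete point on $\partial D$ would let $\partial\B(f_{m,j},c_Qm^{-5})$ dip into $D$---is handled by replacing $\sigma_{m,j}$ by the normalized arc length on its portion outside $D$; the H\"older regularity of $U'_{\nu_D}$ from Proposition~\ref{cor-holder} keeps all error terms of order $\log m/m$.
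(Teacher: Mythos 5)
Your argument for the \emph{second} bound is essentially correct and mirrors the paper's reasoning: your averaged Fekete inequality is exactly the paper's inequality~\eqref{oE-nu-fm} after multiplying by $m/(m-1)$, and your chain (i)--(iv) reproduces~\eqref{-oEm-Unur} plus the correct use of Lemma~\ref{lem-bound-nu-sigma}. (Also, the ``technical wrinkle'' about $\sigma_m$ dipping into $D$ is a non-issue: the step $\int U'_{\sigma_m}\,\dd\sigma_m\leq 2\int U'_{\nu_D}\,\dd\sigma_m-\int U'_{\nu_D}\,\dd\nu_D$ follows from the universal energy inequality $\int(U'_{\sigma_m}-U'_{\nu_D})\,\dd(\sigma_m-\nu_D)\leq 0$ in~\eqref{commute-potential}, which holds for \emph{any} probability measure $\sigma_m$, not just those supported in $X\setminus D$; this is what the paper uses, so there is no need to deform $\sigma_m$.)

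However, your proof of the \emph{first} bound has a genuine gap, and it comes from a misquotation of Lemma~\ref{lem-bound-nu-sigma}. That lemma states
\[
\int U'_{\nu_D}\,\dd\nu_D-\int U'_{\nu_D}\,\dd\delta_{\mathbf f_m}\;\geq\;-C_D\frac{\log m}{m},
\]
i.e.\ an \emph{upper} bound $\int U'_{\nu_D}\,\dd\delta_{\mathbf f_m}\leq\int U'_{\nu_D}\,\dd\nu_D+C_D\log m/m$. You cite it as giving the \emph{lower} bound $\int U'_{\nu_D}\,\dd\delta_{\mathbf f_m}\geq\int U'_{\nu_D}\,\dd\nu_D-C_D\log m/m$, which is precisely the opposite direction, and that direction is what your plug-in to~\eqref{key-master-ineq-plan} requires (since $\int U'_{\nu_D}\,\dd\delta_{\mathbf f_m}$ appears with a negative coefficient there). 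Note this is not a harmless sign slip: your (iv) for the second bound invokes the lemma in the \emph{correct} direction, so you are in fact using both inequalities as if the lemma were a two-sided estimate.

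The deeper issue is that your master inequality~\eqref{key-master-ineq-plan} is genuinely weaker than what the paper uses, and the missing direction cannot be recovered from your ingredients. Your sampling argument is equivalent to integrating the summed Fekete inequalities against $\nu_D$ (use $\int U'_{\delta_{\mathbf f_m}}\,\dd\nu_D=\int U'_{\nu_D}\,\dd\delta_{\mathbf f_m}$), which gives $\oE_m(\mathbf f_m)\geq\frac{m-1}{m}\bigl(2\int U'_{\nu_D}\,\dd\delta_{\mathbf f_m}-\int U'_{\nu_D}\,\dd\nu_D\bigr)$. The paper instead integrates against the capacity-type measure $\eta$ of Lemma~\ref{lem-eta} with $E=X\setminus D$; since $U'_\eta$ is constant on $X\setminus D$, the two integrals $\int U'_\eta\,\dd\delta_{\mathbf f_m}$ and $\int U'_\eta\,\dd\nu_D$ cancel, yielding the sharper inequality~\eqref{oE-fm-U-nu}: $\oE_m(\mathbf f_m)\geq\frac{m-1}{m}\int U'_{\nu_D}\,\dd\delta_{\mathbf f_m}$. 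Only this sharper form, combined with~\eqref{-oEm-Unur} (your (A)), produces the needed $\int U'_{\nu_D}\,\dd\nu_D-\int U'_{\nu_D}\,\dd\delta_{\mathbf f_m}\leq O(\log m/m)$. If you instead combine your master inequality with (A), the terms $2\int U'_{\nu_D}\,\dd\delta_{\mathbf f_m}-\int U'_{\nu_D}\,\dd\nu_D$ cancel on both sides and you only obtain the trivial $\int U'_{\nu_D}\,\dd\nu_D-2\int U'_{\nu_D}\,\dd\delta_{\mathbf f_m}\leq O(\log m)$. To repair the argument you should replace the sampling step by the $\eta$-integration of Lemma~\ref{lem-eta}, which is the device the paper relies on here.
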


\begin{proof}
By symmetry, the inequality~\eqref{ineq-f-1} remains true after replacing  $(f_{m, 1}, \mathbf f'_m)$ by $(f_{m, k}, \sum_{j\neq k}f_{m, j})$  for every $k=1, 2, \cdots, m$. Summing up all these $m$ inequalities, we obtain that (see~\eqref{defn-potentil-type-I})
$$-{1\over m-1}\sum_{j\neq k}  G(f_{m, j},f_{m, k}) +\sum_{j=1}^{m} Q(f_j)\leq -\sum_{j=1}^m G(z,f_{m, j})+mQ(z)   $$
for all $z\in X\setminus D$, where $Q=U'_{\nu_D}$. Multiplying both sides by $(m-1)/m^2$, we receive
\begin{equation}\label{oE-nu-fm} 
-\oE_m(\mathbf f_m) + {m-1\over m}\int U'_{\nu_D} \,\dd \delta_{\mathbf f_m}  \leq  -{m-1\over m} U'_{\delta_{\mathbf f_m}}(z)   +{m-1\over m} U'_{\nu_D}(z).
\end{equation}
 Integrating~\eqref{oE-nu-fm} against the probability measure $\eta$ in Lemma \ref{lem-eta} for $E:=X\setminus  D$, we get
\begin{align*}
-\oE_m(\mathbf f_m) + {m-1\over m}\int U'_{\nu_D} \,\dd \delta_{\mathbf f_m} & \leq  -{m-1\over m} \int U'_{\delta_{\mathbf f_m}} \,\dd \eta   +\int {m-1\over m} U'_{\nu_D} \,\dd\eta \\
\text{[use \eqref{commute-potential}]}
\qquad
&= -{m-1\over m} \int U'_{\eta} \,\dd \delta_{\mathbf f_m}   +{m-1\over m}\int  U'_{\eta} \,\dd\nu_D.
\end{align*}
By Lemma \ref{lem-eta}, $U'_\eta$ is constant on $X\setminus D$. Hence $-\int U'_{\eta} \,\dd \delta_{\mathbf f_m}+\int  U'_{\eta} \,\dd\nu_D=0$. Therefore
\begin{equation}\label{oE-fm-U-nu}
-\oE_m(\mathbf f_m) \leq - {m-1\over m}\int U'_{\nu_D} \,\dd \delta_{\mathbf f_m}.
\end{equation}

Let $\sigma_m$ be the probability measure in Lemma \ref{lem-smooth-sigma}. By Stokes' formula, we have 
$$\int ( U'_{\sigma_m} -U'_{\nu_D})\,\dd(\sigma_m-\nu_D)\leq 0
\qquad\text{(see~\eqref{commute-potential})}.$$  Equivalently,
$$-\int U'_{\sigma_m}\,\dd \sigma_m \geq \int U'_{\nu_D}\,\dd \nu_D -2 \int U'_{\nu_D}\,\dd \sigma_m.   $$
This combining with  Lemma \ref{lem-smooth-sigma} gives
\begin{equation}\label{-oEm-Unur}
-\oE_m (\mathbf f_m) + \int U'_{\nu_D}\,\dd \delta_{\mathbf f_m} +O\Big({\log m\over m}\Big) \geq \int U'_{\nu_D}\,\dd \nu_D  -   \int  U'_{\nu_D}  \,\dd  \delta_{\mathbf f_m}- O\Big ({1\over m}\Big). 
\end{equation}
Taking~\eqref{oE-fm-U-nu} into account, we get
\begin{equation}\label{upper-U-nu-fm}
\int U'_{\nu_D}\,\dd \nu_D  -   \int  U'_{\nu_D}  \,\dd  \delta_{\mathbf f_m}         \leq   -{1\over m} \int U'_{\nu_D}\,\dd \delta_{\mathbf f_m}+C_D'{\log m\over m}. 
\end{equation}
Since $U'_{\nu_D}$ is uniformly bounded, we conclude the first assertion.

\smallskip

For the second one, it is enough to show that $ U'_{\delta_{\mathbf f_m}}(z) \leq  U'_{\nu_D} (z) + C_D \log m/m $ for all $z\in X$. 
Plugging~\eqref{-oEm-Unur} in the left-hand-side of~\eqref{oE-nu-fm}, we obtain that for all $z\in X\setminus D$,
$$ -{1\over m}\int U'_{\nu_D} \,\dd \delta_{\mathbf f_m}  + \int U'_{\nu_D}\,\dd \nu_D  -   \int  U'_{\nu_D}  \,\dd  \delta_{\mathbf f_m} -O\Big({\log m\over m}  \Big)
\leq
{m-1\over m} \Big( U'_{\nu_D} (z) - U'_{\delta_{\mathbf f_m}}(z) \Big).$$
By Lemma \ref{lem-bound-nu-sigma} and the  boundedness of $U'_{\nu_D}$, we deduce that
\[
 U'_{\delta_{\mathbf f_m}}(z) \leq  U'_{\nu_D} (z) + C_D \log m/m ,
 \qquad
\forall z\in X\setminus D.
\]
Noting that $ U'_{\nu_D}-U'_{\delta_{\mathbf f_m}} $ is harmonic on $D$, 
by the maximal principle, the above inequality holds for any $z\in D$ as well. Thus we conclude the proof.
\end{proof}

\medskip

\section{Constructing holomorphic sections by   Fekete points} \label{sec-hole event section}

We have obtained a set of Fekete points  $\Fc_m:=\{ f_{m, 1},\dots, f_{m, m} \}\subset X\setminus D$.
When $X=\P^1$, $\Fc_m$  can be realized directly as the zero set of a degree $m$ homogeneous polynomial in the hole event. However, in the higher genus case, there might not exist $q_1,\dots, q_g$ in $X\setminus D$ such that \eqref{equal-p+q=L-oA} holds for $p_j=f_{m, j}$, due to the restriction of the Abel-Jacobi equation~\eqref{equal-p+q=L-oA}. In other words, this $m$-th Fekete set may not be the zero set of one  section in $H_{n,D}$. For this concern, we will construct a valid section $s_n$ in the hole event  $H_{n,D}$ whose zero set $Z_{s_n}$ deviates from $\Fc_m$ only mildly. 
The following lemma is instrumental in our approach.

\begin{lemma}
    \label{key lemma}
Let $\B(y, R)\subset X$ be the disc
centered at $y\in X$ having radius $R>0$ with respect to $\omega_0$. Then there exist
 small constants $0<\zeta, \beta_1, \beta_2, \beta_3\ll 1$
and a large  integer $k\gg 1$,  such that,
for any $\mathbf Z\in \mathrm{Jac}(X)$, for any $m\geq 1$ points $p_1,\dots, p_{m}\in X$,  
one can find 
$\mathbf P^{[j]}= P_1^{[j]}+\cdots+P_g^{[j]}\in X^{(g)}$ ($1\leq j\leq k$) and $\mathbf Q =Q_1+\cdots+Q_g\in X^{(g)}$
satisfying the following conditions
$$
P_{\ell}^{[j]}, Q_{\ell} \in\,\mathbb{H}_{y,p_1,\dots, p_{m}}^{R,\zeta}:=\B(y,R)\setminus \cup_{j=1}^{m} \B(p_j, \zeta/\sqrt m)\quad\text{for all }\, \ell, j,$$ 
\begin{equation}
    \label{AP+AQ=Z--}
\sum_{j=1}^k\,\oA_g(\mathbf P^{[j]})+\oA_g(\mathbf Q) =\mathbf Z,
\end{equation}
\begin{equation}
\label{dist Q to sub}
\dist \big(\mathbf Q, \pi_g(\Rc_g)\cup \W \big)
\geq
\beta_1,
\end{equation}
\begin{equation} \label{dist-P-P--}
\dist(P_{\ell_1}^{[j_1]},P_{\ell_2}^{[j_2]})
\geq
\beta_2 \quad
\text{for all }\,(j_1, \ell_1)\neq (j_2, \ell_2), 
\end{equation}
\begin{equation} \label{dist-P-Q--}
\dist (P^{[j]}_{\ell_1}, Q_{\ell_2})
\geq
\beta_3 \quad\text{for all } \,j,\ell_1, \ell_2.
\end{equation}
\end{lemma}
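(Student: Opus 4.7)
\textit{Proof plan.} The strategy is to arrange the $g(k+1)$ points of $\mathbf Q,\mathbf P^{[1]},\dots,\mathbf P^{[k]}$ into $g(k+1)$ pairwise disjoint small sub-balls of $\B(y,R)$, so that the separation conditions \eqref{dist-P-P--}--\eqref{dist-P-Q--} become automatic, and then to exploit the local biholomorphy of $\oA_g$ together with a Minkowski-sum estimate (Kneser's inequality) on the compact Jacobian torus $\mathrm{Jac}(X)$ to enforce \eqref{AP+AQ=Z--}.

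Concretely, I would fix an integer $k$ and a radius $r_0>0$ (to be adjusted at the end) and take $g(k+1)$ pairwise disjoint closed balls $B_{j,\ell}\subset \B(y,R)$, $0\leqslant j\leqslant k$, $1\leqslant\ell\leqslant g$, each of radius $r_0$ and pairwise at distance $\geqslant 2\beta_0$ for some $\beta_0>0$; the point $Q_\ell$ will lie in $B_{0,\ell}$ and $P_\ell^{[j]}$ in $B_{j,\ell}$ for $j\geqslant 1$. This immediately yields \eqref{dist-P-P--}--\eqref{dist-P-Q--} with $\beta_2=\beta_3=2\beta_0$, and the disjointness of the $B_{0,\ell}$'s gives $\dist(\mathbf Q,\pi_g(\Rc_g))\geqslant 2\beta_0$. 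The crucial input is that the excluded region $\bigcup_{i=1}^m \B(p_i,\zeta/\sqrt m)$ has total area exactly $\pi\zeta^2$, \emph{independent of} $m$; so fixing $\zeta<r_0/2$ forces each good piece $B_{j,\ell}^\bullet:=B_{j,\ell}\cap \mathbb H_{y,p_1,\dots,p_m}^{R,\zeta}$ to retain area $\geqslant \tfrac{3}{4}\pi r_0^2$, uniformly in $m$ and the $p_i$'s.

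For each $j$ I would fix a reference configuration $(P_\ell^{[j],0})\in \prod_\ell B_{j,\ell}$ with $\sum_\ell P_\ell^{[j],0}\notin \W$; this is achievable, since $\W\subset X^{(g)}$ has codimension two, by spreading the sub-balls sufficiently across $\B(y,R)$. At such a reference $d\oA_g$ has full rank, so $\oA_g\big|_{\prod_\ell B_{j,\ell}}$ is a local biholomorphism onto an open subset $V_j\subset \mathrm{Jac}(X)$ of Haar measure $|V_j|\geqslant c_0$ for some $c_0>0$ depending only on $\B(y,R)$. Because $\oA_g$ is Lipschitz on the compact source $\prod_\ell B_{j,\ell}$, removing a set of area $O(\zeta^2)$ from the source costs at most $O(\zeta^2)$ in image Haar measure; hence $V_j^\bullet:=\oA_g\bigl(\prod_\ell B_{j,\ell}^\bullet\bigr)$ is open with $|V_j^\bullet|\geqslant \mu_0:=c_0/2$ once $\zeta$ is small enough.

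Kneser's inequality on the compact connected torus $\mathrm{Jac}(X)$ asserts iteratively that whenever the Minkowski sum $V_0^\bullet+\cdots+V_j^\bullet\neq \mathrm{Jac}(X)$ one has $|V_0^\bullet+\cdots+V_{j+1}^\bullet|\geqslant |V_0^\bullet+\cdots+V_j^\bullet|+\mu_0$, so choosing $k$ with $(k+1)\mu_0>\mathrm{Vol}(\mathrm{Jac}(X))$ forces $V_0^\bullet+\cdots+V_k^\bullet=\mathrm{Jac}(X)$; therefore, for every target $\mathbf Z\in\mathrm{Jac}(X)$ I can realize \eqref{AP+AQ=Z--} by picking $\mathbf Q\in \prod_\ell B_{0,\ell}^\bullet$ and $\mathbf P^{[j]}\in\prod_\ell B_{j,\ell}^\bullet$ suitably. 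To upgrade the $\mathbf Q$-condition to \eqref{dist Q to sub}, I further restrict $\mathbf Q$ to lie in the complement of a fixed tubular $\beta_1$-neighborhood of $\W$ in $X^{(g)}$; since $\W$ is a proper analytic subvariety, this cuts off only $O(\beta_1^{2})$ from the good part of $V_0^\bullet$, and the Kneser step still closes at the cost of (perhaps) enlarging $k$. The principal obstacle is that the $m$ excluded discs could cluster arbitrarily inside a single sub-ball, but the $m$-independent total area $\pi\zeta^2$ combined with Lipschitz control of $\oA_g$ delivers the uniform image-measure lower bound $\mu_0$ that triggers Kneser's argument independently of $m$ and $\mathbf Z$; the remaining compatibility of $r_0$, $k$ and the Wronskian of the $\phi_j$'s near $y$ (which controls $c_0$) is a finite numerical check using only the fixed geometry of $(X,\omega,\B(y,R))$.
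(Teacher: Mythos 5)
Your argument is correct in outline but takes a genuinely different route from the paper. Both proofs share the two structural observations that make the problem tractable: (i) the forbidden region $\bigcup_{i=1}^m\B(p_i,\zeta/\sqrt m)$ has total area $O(\zeta^2)$ uniformly in $m$, so shrinking $\zeta$ leaves plenty of room everywhere, and (ii) the separation conditions \eqref{dist-P-P--}--\eqref{dist-P-Q--} can be made automatic by placing the sought points in a fixed family of pairwise disjoint small regions. Where you diverge is in how the Abel--Jacobi constraint \eqref{AP+AQ=Z--} is enforced. The paper works in a single local chart, chooses a reference $g$-tuple where the Wronskian $\det(\omega_\ell(z_j'))$ is nonzero, linearizes via the inverse function theorem so that the local inverse is approximately the identity, and then chooses the $k$ perturbations $\widetilde{\mathbf P}^{[j]}$ from a finite square lattice together with its negative, taking $\widetilde{\mathbf P}^{[j+t^2]}=-\widetilde{\mathbf P}^{[j]}$ so that the perturbations sum to zero and the Abel--Jacobi relation is satisfied exactly by this cancellation; a volume count shows one can choose each lattice perturbation to avoid the bad discs. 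You instead appeal to Kneser's sumset inequality on the compact connected abelian group $\mathrm{Jac}(X)$: each $V_j^\bullet:=\oA_g(\prod_\ell B_{j,\ell}^\bullet)$ has Haar measure uniformly bounded below, and iterating Kneser shows $V_0^\bullet+\cdots+V_k^\bullet=\mathrm{Jac}(X)$ once $(k+1)\mu_0>\Vol(\mathrm{Jac}(X))$. Your approach is conceptually cleaner and avoids the lattice bookkeeping at the cost of importing a nontrivial additive-combinatorics theorem; the paper's construction is more elementary (nothing beyond the inverse function theorem) and gives a more explicit handle on $k$.

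Two small slips worth flagging, neither of which breaks the argument. The sets $V_j^\bullet$ are not open: $B_{j,\ell}^\bullet=B_{j,\ell}\setminus\bigcup_i\B(p_i,\zeta/\sqrt m)$ is compact (closed ball minus open balls), so $V_j^\bullet$ is compact. Fortunately compactness is exactly what you want, since it makes every Minkowski sum $V_0^\bullet+\cdots+V_j^\bullet$ compact and hence Haar measurable, which is needed to run the Kneser iteration; you should also spell out the final step that once $\mu(V_0^\bullet+\cdots+V_{k-1}^\bullet)>\Vol(\mathrm{Jac}(X))-\mu_0$, the translate $(\mathbf Z-V_k^\bullet)$ must intersect the partial sum, giving $V_0^\bullet+\cdots+V_k^\bullet=\mathrm{Jac}(X)$. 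Second, the measure removed by the tubular $\beta_1$-neighborhood of $\W$ is $O(\beta_1^4)$, not $O(\beta_1^2)$, since $\W$ has complex codimension $2$ (real codimension $4$); this is in your favor, so it is only an imprecision.
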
 

\medskip
The proof is quite involved, and will be separated into $7$ Steps with respect to the thinking process. For clarifying the independence of
some uniform positive constants on $\mathbf{Z}$, $m$, $p_1, \dots, p_m$, we will write
$\mathsf{K}_1, \mathsf{K}_2, \mathsf{K}_3, \dots$ instead of merely $O(1)$. 

\medskip
\begin{proof}
{\bf Step 1.} By demanding $\zeta\ll 1$ to be sufficiently small, we can ensure that the union of any $m$ closed discs
$\cup_{j=1}^{m} \overline{\B}(p_j, \zeta/\sqrt m)$ does not cover the open annulus $\B(y,R)\setminus \overline{\B}(y,2R/3)$ (we can assume that it is nonempty by shrinking $R$ if necessary) by  area comparison
\begin{align}
\sum_{j=1}^m \mathrm{Area}_{\omega_0}
\Big(
\overline{\B}(p_j, \zeta/\sqrt m)
\Big)
&
\leq
m\cdot\mathsf{K}_1\cdot (\zeta/
\sqrt{m})^2
\nonumber
\\
\label{zeta condition 1}
&
=
\mathsf{K}_1\cdot \zeta^2
<
\mathrm{Area}_{\omega_0}
\Big(
\B(y,R)\setminus \overline{\B}(y,2R/3)
\Big).
\end{align}
The existence of a uniform constant $\mathsf{K}_1>0$ is guaranteed by the compactness of $X$. 
Thus 
we can choose $g$ points $Q_{\ell}$ in the nonempty open set
$$
\Big(
\B(y,R)\setminus \overline{\B}(y,2R/3)
\Big)\setminus \cup_{j=1}^{m} \overline{\B}(p_j, \zeta/\sqrt m)$$
for $\ell=1,\dots, g$, 
such that
$\mathbf Q :=Q_1+\cdots+Q_g\notin \pi_g(\Rc_g)\cup \W$,
since $\pi_g(\Rc_g)\cup \W$ is a proper subvariety of $X^{(g)}$. 
However, at the moment it is still not clear why we can choose $\beta_1>0$ independent of $\mathbf{Z}$, $m$, $p_1, \dots, p_m$ such that the requirement~\eqref{dist Q to sub} holds.

\medskip\noindent
{\bf Step 2.}
To this aim, we improve the above reasoning as follows. 
First, we pick $g$ points
$Q_1', \dots, Q_g'$ in 
$\B(y,R)\setminus \overline{\B}(y,2R/3)$
such that
$\mathbf Q' :=Q_1'+\cdots+Q_g'\notin \pi_g(\Rc_g)\cup \W$.
Next,  by continuity argument,  we can choose a sufficiently small positive constant
$\mathsf{K}_2\ll 1$ such that
$\overline{\B}(Q'_{\ell}, \mathsf{K_2})\subset \B(y,R)\setminus \overline{\B}(y,2R/3)$ for $\ell=1, \dots, g$
and such that
$Q_1+\cdots+Q_g\notin \pi_g(\Rc_g)\cup \W$ whenever
$Q_1, \dots, Q_g$
are mild perturbations of
$Q_1', \dots, Q_g'$ respectively within distance $\mathsf{K_2}$, {\em i.e.},
$\mathrm{dist}(Q_{\ell}, Q_{\ell}')\leq \mathsf{K_2}$ for $\ell=1,\dots, g$.
By compactness, for all such $Q_1, \dots, Q_g$ we have~\eqref{dist Q to sub} for some uniform constant $\beta_1>0$. Lastly, we only need to make sure that
$\cup_{j=1}^{m} \overline{\B}(p_j, \zeta/\sqrt m)$ does not cover any of $\overline{\B}(Q_{\ell}, \mathsf{K_2})$ for $\ell=1, \dots, g$. This can be achieved by modifying~\eqref{zeta condition 1} as
\begin{equation}
    \label{zeta condition 2}
    \mathsf{K}_1\cdot \zeta^2
<
\mathrm{Area}_{\omega_0}
\big(
\overline{\B}(Q_{\ell}', \mathsf{K_2})
\big)
\quad
\text{for every}
\quad
\ell=1, \dots, g.
\end{equation}
Thus~\eqref{dist Q to sub} is obtained.
 
\medskip\noindent
{\bf Step 3.} We will choose all the remaining points
$P_{\bullet}^{[\bullet]}$ in $\B(y,R/2)$ so that~\eqref{dist-P-Q--} automatically holds for sufficiently small $\beta_3\ll 1$. Now the only remaining obstacles are~\eqref{AP+AQ=Z--} and~\eqref{dist-P-P--}.

Fix a local holomorphic coordinate  chart $\varphi: \mathbb{D}\hookrightarrow \B(y,R/2)$
with
\begin{equation}
    \label{detail 1}
\|\mathrm{d}\varphi\|
> \mathsf{K}_3>0
\end{equation}
with respect to $\omega_0$ and the Euclidean metric on $\mathbb{D}$. Recall the construction~\eqref{Jacobian variety} of the Jacobian variety defined by the holomorphic $1$-forms $\phi_1, \dots, \phi_g$. 
Read
$$\varphi^*\phi_1(z)=\omega_1(z)\mathrm{d}z, \quad \dots, \quad \varphi^*\phi_g(z)=\omega_g(z)\mathrm{d}z$$ in the local chart, where $z$ is the standard coordinate of the unit disc $\mathbb{D}$.
By the common knowledge of Riemann surfaces (e.g.~\cite[p.~87]{MR1172116}), 
the equation
$\det(\omega_\ell (z_j))_{1\leq \ell , j\leq g}=0$ defines a proper subvariety of $\mathbb{D}^g$ with coordinates $(z_1, \dots, z_g)$. Fix a point
$(z_1', \dots, z_g') \in \mathbb{D}^g$ outside this subvariety, {\em i.e.}, 
$\det(\omega_\ell (z_j'))_{1\leq \ell , j\leq  g}\neq 0$. Therefore the fixed matrix $M:=\big(\omega_\ell (z_j')\big)_{1\leq \ell ,j\leq g}$ is invertible, and the $g$ fixed points $z_1', \dots , z_g'$ are pairwise distinct.

\medskip\noindent
{\bf Step 4.} Fix a small positive constant
\begin{equation}
    \label{delta = ?}
\delta:=\min\big\{|z_{\ell_1}'-z_{\ell_2}'|/3,\, 1-|z_{\ell}'|:\, 1\leq\ell_1< \ell_2\leq g, \,\ell=1, 
\dots, g\big\}>0
\end{equation}
which is obviously independent of $\mathbf{Z}$, $m$, $p_1, \dots, p_m$.
Define a holomorphic map (see~\eqref{abel-jacobi map})
$$F: \mathbb{D}_{\delta}^g\rightarrow\mathrm{Jac}(X),\quad (z_1, 
\dots, z_g)\mapsto\oA\big(\varphi(z_1'+z_1)\big)+\cdots+\oA\big(\varphi(z_g'+z_g)\big).$$ 
Then the  the differential $\mathrm{d}F$ of $F$
at the origin $(0, \dots, 0)\in \mathbb{D}^g$ 
has the Jacobian matrix $M$.
Select another system  
$
(\widetilde\phi_1, \dots, \widetilde\phi_g):=(\phi_1, \dots, \phi_g)\cdot M^{-1} $
of $\mathbb{C}$-linearly independent holomorphic $1$-forms on $X$.
The Jacobian variety $\widetilde{\mathrm{Jac}}(X)$ associated with $(\widetilde\phi_1, \dots, \widetilde\phi_g)$ is clearly 
$\widetilde{\mathrm{Jac}}(X):= \C^g/ \Lambda\cdot M^{-1}$ by~\eqref{Jacobian variety}, and the corresponding Abel-Jacobi map $\widetilde{\oA}: X\rightarrow \widetilde{\mathrm{Jac}}(X)$ defined as
  \[
\widetilde{\oA}(x):=\Big(\int_{p_\star}^x \widetilde\phi_1,\,\dots\,,\,\int_{p_\star}^x \widetilde\phi_g \Big) \quad  (\text{mod} \,\,\Lambda\cdot M^{-1}),
\qquad
\forall x\in X
  \]  
  satisfies that   $\widetilde{\oA}=\oA\cdot M^{-1}$, where in abuse of notation we also denote $M^{-1}$ as the isomorphism from ${\mathrm{Jac}}(X)=\C^g/ \Lambda$ to $\widetilde{\mathrm{Jac}}(X)=\C^g/ \Lambda\cdot M^{-1}$.
  Define another holomorphic map
\begin{equation}
    \label{define F tilde}
\widetilde{F}: \mathbb{D}_{\delta}^g\rightarrow\widetilde{\mathrm{Jac}}(X), \quad (z_1, 
\dots, z_g)\mapsto \widetilde\oA\big(\varphi(z_1'+z_1)\big)+\cdots+\widetilde\oA\big(\varphi(z_g'+z_g)\big).
\end{equation}
It is clear that $\widetilde{F}={F}\cdot M^{-1}$. 
Once we identify both the tangent spaces of $\mathbb{D}_{\delta}^g$ and 
$\widetilde{\mathrm{Jac}}(X)$ at the origins as $\mathbb{C}^g$,
 the   differential $\mathrm{d} \widetilde{F}: \mathbb{C}^g\rightarrow \mathbb{C}^g$ of $\widetilde{F}$
at $(0, \dots, 0)\in \mathbb{D}_{\delta}^g$ 
is  nothing but the identity map.

\medskip\noindent
{\bf Step 5.} We plan to choose
\begin{equation}
    \label{how to choose P}
P^{[j]}_{\ell}:=\varphi(z_\ell'+z^{[j]}_{\ell}) 
\quad\text{for some}\quad z^{[j]}_{\ell}\in \mathbb{D}_{\delta} 
\quad\text{for all}\quad j, \ell.
\end{equation}
Let $\widetilde G:=\widetilde{F}-\widetilde{F}(0, \dots, 0)$
be the translation of 
$\widetilde{F}$ so that
$\widetilde G$ maps the origin of $\mathbb{D}_{\delta}^g$ to that  of $ \widetilde{\mathrm{Jac}}(X)$.
First of all, we transform the equation~\eqref{AP+AQ=Z--}
equivalently to
\begin{equation}
    \label{key equation}
\sum_{j=1}^k\,G(z^{[j]}_{1}, \dots, z^{[j]}_{g})=
\big(\mathbf Z-\oA_g(\mathbf Q)\big)
\cdot M^{-1}
-
k\widetilde{F}(0, \dots, 0)
=:
[\widetilde{\mathbf Z}]\in \widetilde{\mathrm{Jac}}(X)
\end{equation}
for some $\widetilde{\mathbf Z}\in \mathbb{C}^g$ in a fundamental domain 
of $\widetilde{\mathrm{Jac}}(X)=\C^g/ \Lambda\cdot M^{-1}$. Note that the Euclidean norm  of $\widetilde{\mathbf Z}$ is uniformly bounded from above by some finite constant $\mathsf{K}_4$. 

\medskip\noindent
{\bf Step 6.}
The upshot is that the differential $\mathrm{d} \widetilde G: \mathbb{C}^g\rightarrow \mathbb{C}^g$ 
at the origin of $\mathbb{D}_{\delta}^g$ 
is the identity map.
By the inverse function theorem, we can define the inverse $\widetilde G^{-1}$ of $\widetilde G$ on a small neighborhood 
$U$ of the origin of $\widetilde{\mathrm{Jac}}(X)$. The key idea is to show,
    by means of volume comparison, that for some large integer $k$ and for sufficiently small $0<\zeta\ll 1$, we can choose certain $k$ points
$\widetilde{\mathbf{P}}^{[j]}\in \mathbb{C}^g$ near the origin for $j=1, \dots, k$, such that $\sum_{j=1}^k \widetilde{\mathbf{P}}^{[j]}=0$, and 
 such that
\begin{equation}
    \label{how to choose g coordinates}
(z^{[j]}_{1}, \dots, z^{[j]}_{g}):=
\widetilde G^{-1}\big([k^{-1}\cdot \widetilde{\mathbf Z}+\widetilde{\mathbf{P}}^{[j]}] \big)
\end{equation}
meets the requirement~\eqref{dist-P-P--}. 

Now we fulfill some  technical details. For convenience of notation, we regard $U$ as an open neighborhood of the origin of $\mathbb{C}^g$.
It is clear that the differential 
$\mathrm{d} \widetilde G^{-1}: \mathbb{C}^g\rightarrow \mathbb{C}^g$ at the origin
is the identity map.
Hence we can fix a sufficiently small constant $\epsilon>0$, such that for every $\mathbf w$ in the polydisc
$\mathbb{D}_{\epsilon}^g\subset U$, we have
\begin{equation}
    \label{key linearization}
\widetilde G^{-1}(\mathbf w)=\mathbf{w}+O(\|\mathbf w\|^2),
\end{equation}
 where the error term
$O(\|\mathbf w\|^2)$ has Euclidean norm $\leq \mathsf{K}_5\cdot \|\mathbf w\|^2 $.

Denote the finite square lattice
$$\mathcal{W}_{t, \rho}:=\big\{a\rho+b\rho\cdot i \in \mathbb{C}: \, a, b\in 
\mathbb{Z}\cap[1, t]  \big\}$$ 
 for some $t\in \mathbb{Z}_+, \rho>0$ to be determined. Let $\mathfrak I: \mathbb{C}\rightarrow \mathbb{C}^g$ be the diagonal map, sending $z$ to $(z, z, \dots, z)$.
 We will take $k=2  t^{2}$, and take 
the aforementioned points $\widetilde{\mathbf{P}}^{[j]}\in \mathbb{C}^g$ 
for $j=1, \dots, t^{2}$
as mild perturbations
of the  $t^{2}$ points in $\mathfrak I (\mathcal{W}_{t, \rho})$,
and for $j=1+t^{2}, \dots, 2  t^{2}$ as mild perturbations of the  $t^{2}$ points in  $-\mathfrak I (\mathcal{W}_{t, \rho})$.
Here, perturbations mean that the differences are within $\mathbb{D}^g_{\rho/3}$.
In practice, we will take
\begin{equation}
\label{key duality trick}
\widetilde{\mathbf{P}}^{[j+t^{2}]}
=
-
\widetilde{\mathbf{P}}^{[j]} 
\quad
\text{for}\quad
j=1, \dots, t^{2}.
\end{equation}

First of all, we need to ensure that
all possible $\{k^{-1}\cdot \widetilde{\mathbf Z}+\widetilde{\mathbf{P}}^{[j]}\}_{j=1}^{k}$ chosen in this way
are contained in 
$\mathbb{D}_{\epsilon}^g$.
For this goal, we require that 
\begin{equation}
    \label{conditions on k and rho}
\|k^{-1}\cdot \widetilde{\mathbf Z}\|
=
\|2^{-1}\cdot t^{-2}\cdot \widetilde{\mathbf Z}\|
\leq
\epsilon/100    \quad \text{and}\quad
t\cdot\rho
\leq 
\epsilon/2.
\end{equation}

Next, we verify~\eqref{key duality trick} for each $j$.
The only  risk is that one of $g$ coordinates of  $\widetilde G^{-1}
(k^{-1}\cdot \widetilde{\mathbf Z}+\widetilde{\mathbf{P}}^{[j]})$ or of $\widetilde G^{-1}
(k^{-1}\cdot \widetilde{\mathbf Z}+\widetilde{\mathbf{P}}^{[j+t^{2}]})=\widetilde G^{-1}
(k^{-1}\cdot \widetilde{\mathbf Z}-\widetilde{\mathbf{P}}^{[j]})$
lies in $\varphi^{-1}\big(\cup_{j=1}^{m} \B(p_j, \zeta/\sqrt m)\big)=:\Bc$. Note that such ``bad''  choices of $\widetilde{\mathbf{P}}^{[j]}$ are contained either in 
$$\bigcup_{j=0}^g 
\Big(
\widetilde G(\mathbb{D}_{\delta}^j\times \Bc \times \mathbb{D}_{\delta}^{g-1-j})
-
k^{-1}\cdot \widetilde{\mathbf Z}
\Big)
\quad\text{ 
or
in}\quad 
\bigcup_{j=0}^g \Big(
k^{-1}\cdot \widetilde{\mathbf Z}-\widetilde G(\mathbb{D}_{\delta}^j\times \Bc \times \mathbb{D}_{\delta}^{g-1-j})\Big),$$  
whose total Euclidean volume is
$\leq \mathsf{K}_6\cdot \zeta^2$ (see~\eqref{detail 1}).
Since all possible choices of $\widetilde{\mathbf{P}}^{[j]}$
has the Euclidean volume
equal to that of $\mathbb{D}^g_{\rho/3}$, as long as
\begin{equation}
    \label{key condition on zeta}
\mathsf{K}_6\cdot \zeta^2
<
\mathrm{Vol}(\mathbb{D}^g_{\rho/3})
=
\mathsf{K}_7\cdot\rho^{2g},
\end{equation}
we can take $\widetilde{\mathbf{P}}^{[j]}$ not ``bad''.
    Thus~\eqref{AP+AQ=Z--} is  satisfied because of~\eqref{key equation},~\eqref{how to choose g coordinates}  and~\eqref{key duality trick}.

\medskip\noindent
{\bf Step 7.}
In the end, we check the requirement~\eqref{dist-P-P--}.
By the definition~\eqref{how to choose g coordinates}, 
for any $\ell=1, \dots, g$, for any two distinct $j_1, j_2\in \{1, \dots, k\}$,
using~\eqref{key linearization} we can obtain that
\begin{equation}
    \label{distance large}
\aligned
|z^{[j_1]}_{\ell}-z^{[j_2]}_{\ell}|
\geq
(\rho-\rho/3-\rho/3)
-
2\cdot \mathsf{K}_5\cdot
g\epsilon^2
=
\rho/3
-
2g\mathsf{K}_5\cdot \epsilon^2.
\endaligned
\end{equation}
Here $g\epsilon^2$ stands for an upper bound for both $\|k^{-1}\cdot \widetilde{\mathbf Z}+\widetilde{\mathbf{P}}^{[j_1]}\|^2$ and $\|k^{-1}\cdot \widetilde{\mathbf Z}+\widetilde{\mathbf{P}}^{[j_2]}\|^2$. 

Note that under the restriction~\eqref{conditions on k and rho} we can still make 
\begin{equation}
    \label{equation rho, l 2}
   \rho/3
-
2g\mathsf{K}_5\cdot \epsilon^2
\geq
\rho/6.
\end{equation}
Indeed,~\eqref{conditions on k and rho} is a consequence of
\begin{equation}
    \label{explicit values of l, rho}
 2^{-1}\cdot t^{-2} \cdot\mathsf{K}_4
\leq 
\epsilon/100,
\qquad
t\cdot\rho\leq \epsilon/2.
\end{equation}
By shrinking $\epsilon>0$ if necessary, 
we can check that by selecting 
\[
t:=\big[\sqrt{50\mathsf{K}_4/\epsilon}  \,\big]+1,\qquad
\rho:=12g\mathsf{K}_5 \epsilon^2>0,
\]
both the requirements~\eqref{equation rho, l 2}
and~\eqref{explicit values of l, rho} are satisfied. 
Thus~\eqref{distance large} implies that 
$$|z^{[j_1]}_{\ell}-z^{[j_2]}_{\ell}|\geq
\rho/6>0,
\quad\text{hence}
\quad
\mathrm{dist}(P^{[j_1]}_{\ell}, P^{[j_2]}_{\ell})
\geq
\mathsf{K}_8\cdot \rho/6
>0.
$$

On the other hand, 
for any $1\leq j_1, j_2\leq k$,
for any two distinct $\ell_1, \ell_2\in \{1, \dots, g\}$,  by the construction   \eqref{delta = ?}  and  \eqref{how to choose P}, 
we have
\[
\big|(z_{\ell_1}'+z_{\ell_1}^{[j_1]})-
(z_{\ell_2}'+z_{\ell_2}^{[j_2]})\big|\geq \delta,
\quad\text{hence}
\quad
\mathrm{dist}(P_{\ell_1}^{[j_1]}, P_{\ell_2}^{[j_2]})
\geq
\mathsf{K}_8\cdot \delta
>0.
\]

Summarizing, we can take 
$\beta_2:=\min\{\mathsf{K}_8\cdot \rho/6,   \, \mathsf{K}_8\cdot \delta\}$, and demand
$\zeta>0$ to obey~\eqref{zeta condition 2} and~\eqref{key condition on zeta},
so that all the requirements are satisfied. 
\end{proof}

Using Lemma~\ref{key lemma}, we can construct a valid section in $H_{n,D}$, whose zeros are mostly coming from the $m$-th Fekete set.

\begin{proposition}
    \label{cor-fekete-section}
For every $n$ large enough, one can find a holomorphic section $s_n \in H_{n,D}$ with zero set $Z_{s_n}=\{p_1,\dots ,p_m,q_1,\dots,q_g \}$, such that 
\begin{equation} \label{pjfj}
p_j = f_{m, j} \quad\text{for all }\, j\leq m-k_Dg, \end{equation}
\begin{equation} \label{pj1pj2}
\dist(p_{j_1},p_{j_2})\geq  c_D m^{-3}  \quad\text{for all }\,  j_1\neq j_2,  \end{equation}
\begin{equation}\label{pjql}
\dist(p_j,  q_l)\geq c_D/\sqrt m     \quad\text{for all }\,  j,l,
\end{equation}
\begin{equation}\label{qD}
\dist(q_l,D)\geq c_D     \quad\text{for all }\,  l,
\end{equation}
\begin{equation}\label{q-W}
\dist \big(q_1+\cdots+q_g, \pi_g(\Rc_g)\cup \W \big)\geq c_D,
\end{equation}
where $k_D\in\Z_+,c_D>0$ are independent of $n$.
\end{proposition}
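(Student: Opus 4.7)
The plan is to use Lemma~\ref{key lemma} as a ``patching'' device: we keep most of the Fekete points intact and surgically replace the last $k_D \cdot g$ of them, together with the auxiliary $g$ points $q_1, \ldots, q_g$, by points drawn from a fixed small disc far from $D$. This way, the Abel--Jacobi constraint~\eqref{equal-p+q=L-oA} can be met without pushing any zero into $D$ or into $\W$.

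More precisely, first I would fix once and for all a disc $\B(y, R) \subset X$ with $\overline{\B(y, R)} \cap \overline{D} = \varnothing$, so that $\dist(\B(y, R), D) \geq 2c_D$ for some $c_D > 0$ depending only on $D$. Apply Lemma~\ref{key lemma} to this disc once to obtain the structural constants $k \in \mathbb{Z}_+$ and $\zeta, \beta_1, \beta_2, \beta_3 > 0$; set $k_D := k$. For every $n$ large enough (so that $m = n - g > k_D g$), set
\begin{equation*}
    \mathbf{Z} \,:=\, \oA_n(\oL^n) \,-\, \oA_{m - k_D g}(f_{m,1} + \cdots + f_{m, m - k_D g}) \,\in\, \mathrm{Jac}(X),
\end{equation*}
and apply Lemma~\ref{key lemma} with this $\mathbf{Z}$ and with the avoidance points chosen to be \emph{all} $m$ Fekete points $f_{m,1}, \ldots, f_{m,m}$. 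This produces points $\mathbf{P}^{[j]} = P_1^{[j]} + \cdots + P_g^{[j]}$ for $j = 1, \ldots, k_D$ and $\mathbf{Q} = Q_1 + \cdots + Q_g$, all lying in $\B(y, R) \setminus \bigcup_{i=1}^{m} \B(f_{m,i}, \zeta/\sqrt{m})$, satisfying~\eqref{AP+AQ=Z--}--\eqref{dist-P-Q--}. Define the new divisor
\begin{equation*}
    \oD \,:=\, \sum_{j=1}^{m - k_D g} f_{m,j} \;+\; \sum_{j=1}^{k_D} \mathbf{P}^{[j]} \;+\; \mathbf{Q},
\end{equation*}
relabel its first $m$ points as $p_1, \ldots, p_m$ (with $p_j = f_{m,j}$ for $j \leq m - k_D g$ and the rest being the $P_\ell^{[j]}$'s in any order) and its last $g$ points as $q_1, \ldots, q_g := Q_1, \ldots, Q_g$. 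By~\eqref{AP+AQ=Z--}, the Abel--Jacobi image of $\oD$ equals $\oA_n(\oL^n)$, so Abel's theorem furnishes a section $s_n \in H^0(X, \oL^n)$ with $Z_{s_n} = \oD$.

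The remaining verification is routine. Since every point of $\oD$ lies either in $X \setminus D$ (the Fekete points) or in $\B(y, R) \subset X \setminus D$, we have $s_n \in H_{n, D}$; moreover~\eqref{qD} and~\eqref{q-W} follow immediately from the choice of $\B(y, R)$ and from~\eqref{dist Q to sub} respectively. For~\eqref{pj1pj2}, three cases arise: two Fekete $p_j$'s give $\dist \geq c_Q m^{-3}$ by Proposition~\ref{prop-fekete-seperate} (using Proposition~\ref{cor-holder}); two points of type $P_\ell^{[j]}$ give $\dist \geq \beta_2$ by~\eqref{dist-P-P--}; and one of each type gives $\dist \geq \zeta/\sqrt{m} \gg m^{-3}$ by the avoidance condition. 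For~\eqref{pjql}, again the Fekete-versus-$Q$ case is handled by the $\zeta/\sqrt{m}$ avoidance (which is $\geq c_D/\sqrt{m}$), and the $P$-versus-$Q$ case by~\eqref{dist-P-Q--}. Taking $c_D$ to be the minimum of all the constants produced along the way (which depend only on $D$, never on $n$) closes the proof.

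The only genuine difficulty, which has been isolated inside Lemma~\ref{key lemma}, is producing simultaneously the Abel--Jacobi equation, the avoidance of the (many, crowded) Fekete points, and the distance bounds away from $\W \cup \pi_g(\Rc_g)$ with uniform constants independent of $\mathbf{Z}$ and of $m$. Once that lemma is granted, the present proposition is essentially a bookkeeping exercise: the surgery affects only $k_D g$ out of $m$ zeros, which is asymptotically negligible but structurally indispensable, since without it the $q_j$'s produced by the naive Abel--Jacobi map from pure Fekete data could fall inside $D$ or collide with each other.
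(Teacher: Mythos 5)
Your proposal is correct and follows essentially the same route as the paper's own proof: fix a disc $\B(y,R)\Subset X\setminus\overline D$, apply Lemma~\ref{key lemma} once to extract $k_D$ and the avoidance constants, then run the lemma on the data $\mathbf Z=\oA_n(\oL^n)-\oA_{m-k_D g}(f_{m,1}+\cdots+f_{m,m-k_D g})$ to produce the surgical replacement points, and read off all the distance bounds from $\mathbb{H}^{R,\zeta}$, \eqref{dist Q to sub}--\eqref{dist-P-Q--}, and Proposition~\ref{prop-fekete-seperate}. The only cosmetic difference is that you pass all $m$ Fekete points to the lemma as avoidance points, whereas the paper only passes the first $m-k_D g$; since the last $k_D g$ Fekete points are discarded anyway, this is harmless (and slightly over-conservative).
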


\begin{proof}
We apply Lemma \ref{key lemma} upon an open disc $\B(y, R)\Subset X\setminus \overline D$ to get a $k_D:=k\in \N^*$. Now we start with the data
$$p_j:=f_{m, j} \quad\text{for}\quad j\leq m-kg,\quad    \mathbf Z:= \oA_{n} (\oL^n) -\sum_{j=1}^{m-kg} \oA(f_{m, j}),$$
to obtain the remaining $kg+g$ points 
$$p_{m-kg+1}+\cdots+p_m:=\mathbf P^{[1]}+\cdots +\mathbf P^{[k]} \quad\text{and}\quad  q_1+\cdots+q_g:=\mathbf Q.$$
These $n$ points are the zeros   of some holomorphic section $s_n$ because of equation \eqref{AP+AQ=Z--}. The  desired distance inequalities come from the definition of $\mathbb{H}_{y,p_1,\dots, p_{m}}^{R,\zeta}$ and   \eqref{dist Q to sub},\eqref{dist-P-P--}, \eqref{dist-P-Q--}.
\end{proof}

The zeros of $s_n$ also gives an element in $\oS_{m,D}$ (see~\eqref{defn-oS-m}) for all $m$ large enough. The last inequality of the corollary implies that $p_1+\cdots+p_m \notin \bH_m$, {\em i.e.},   $q_1+\cdots +q_g$ is unique.

\medskip

\section{Difference between the two functionals
}
\label{sect: Deviation of the two functionals}

In this section, our goal is to approximate the minimum of the functional $\oI_D$ closely by $-\oE_m(\mathbf p)+2\max U'_{\delta_{\mathbf p}}$, and show that such $\mathbf p\in X^{(m)}$ can constitute ``large enough'' volume.   

\begin{proposition}\label{prop-oE-oF-oI}
There exist  constants $m_0\in \mathbb{Z}_+, C_D>0$, 
such that for every large integer $m\geq m_0$, one can find an open subset  $\Omega_{m,D} \subset \oS_{m,D}\subset X^{(m)}$  (see~\eqref{defn-oS-m}) with volume $\Vol(\Omega_{m,D})\geq m^{-11m}$ and that 
\begin{equation}\label{ineq-oE-oF-oI}
-\oE_m(\mathbf p)+ {2(m+1)\over m} \oF_m(\mathbf p)  \leq \min \oI_D+   C_D { \log m \over m},
\qquad
\forall\, \mathbf p\in \Omega_{m,D}. 
\end{equation}
\end{proposition}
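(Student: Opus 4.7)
My plan is to realize $\Omega_{m,D}$ as a small polydisc neighborhood (in $X^{(m)}$) of an ``anchor'' configuration $\mathbf p^*\in\oS_{m,D}$ built from the Fekete set $\Fc_m$, and then verify the target inequality across this neighborhood. For the anchor, I apply Proposition~\ref{cor-fekete-section} to obtain a holomorphic section $s_n\in H_{n,D}$ whose zero set $\{p_1^*,\ldots,p_m^*,q_1^*,\ldots,q_g^*\}\subset X\setminus D$ satisfies $p_j^*=f_{m,j}$ for $j\leq m-k_Dg$, together with the separation bounds~\eqref{pj1pj2}--\eqref{q-W}. These bounds immediately force $\mathbf p^*\in\oS_{m,D}$.

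To establish the inequality at $\mathbf p^*$, I observe that $\mathbf p^*$ differs from $\mathbf f_m$ at only $|\mathcal{J}|:=k_Dg=O(1)$ indices, with all involved pairs separated at scale $\gtrsim m^{-3}$, so a direct computation using $G=\log\dist+\varrho$ yields $|\oE_m(\mathbf p^*)-\oE_m(\mathbf f_m)|=O(\log m/m)$. For $\oF_m$, Lemma~\ref{lem-f-m-p} reduces the task to a pointwise estimate $U'_{\delta_{\mathbf p^*}}(z)\leq U'_{\nu_D}(z)+C_D\log m/m$ on all of $X$, from which the integral inequality $\tfrac{1}{2m}\log\int e^{2mu}\,d\omega_0\leq\max u+O(1/m)$ gives $\oF_m(\mathbf p^*)\leq\max U'_{\nu_D}+O(\log m/m)$. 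To obtain the pointwise bound, I decompose $\delta_{\mathbf p^*}=\tfrac{m-|\mathcal{J}|}{m}\delta_{\mathbf f_m\setminus\mathcal{J}}+\tfrac{1}{m}\sum_{j\in\mathcal{J}}\delta_{p_j^*}$, extend the Fekete estimate~\eqref{upper-U-nu-fm} from $\mathbf f_m$ to the sub-Fekete potential $U'_{\delta_{\mathbf f_m\setminus\mathcal{J}}}$ inductively by removing Fekete points one at a time using the Fekete inequality~\eqref{ineq-f-1} and the H\"older regularity of $U'_{\nu_D}$ from Proposition~\ref{cor-holder}, and absorb the correction $\tfrac{1}{m}\sum_{j\in\mathcal{J}}G(z,p_j^*)=O(1/m)$ coming from the uniform upper bound on $G$. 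Combined with Proposition~\ref{prop-fekete-log-bound}, this yields the inequality at $\mathbf p^*$ with constant $O(\log m/m)$.

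Next, I define $\Omega_{m,D}:=\prod_{j=1}^m\B(p_j^*,r_m)\subset X^{(m)}$ with $r_m:=m^{-5}$. Since the $p_j^*$ are distinct and $r_m$ is much smaller than the pairwise separations, the product injects into $X^{(m)}$ and the volume of $\Omega_{m,D}$ is $\geq(2\pi)^mr_m^{2m}\geq m^{-11m}$ for large $m$. The inclusion $\Omega_{m,D}\subset\oS_{m,D}$ follows from~\eqref{pj1pj2}--\eqref{q-W}: each $p_j$ stays in $X\setminus D$ with $\dist(p_j,D)\geq c_D-r_m>0$, the pairwise separation $\dist(p_{j_1},p_{j_2})\geq c_Dm^{-3}-2r_m\geq m^{-4}$ is preserved, and since $\mathbf p^*\notin\bH_m$ by~\eqref{q-W}, the map $\oB_m$ is holomorphic with bounded derivative near $\mathbf p^*$, so $\|\oA_m(\mathbf p)-\oA_m(\mathbf p^*)\|=O(mr_m)=O(m^{-4})$ forces $\dist(q_l,q_l^*)=O(m^{-4})$, whence $\dist(q_l,D)\geq c_D/2$ and $\dist(p_j,q_l)\geq c_D/\sqrt{m}-O(m^{-4})\geq 1/m$.

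Finally, I transfer the inequality from $\mathbf p^*$ to any $\mathbf p\in\Omega_{m,D}$. For $\oE_m$, the monotonicity of $\log$ combined with the separation $\dist(p_{j_1}^*,p_{j_2}^*)\gtrsim m^{-3}$ yields $|\oE_m(\mathbf p)-\oE_m(\mathbf p^*)|\leq O(m^3r_m)=O(m^{-2})$. For $\oF_m$, I use the representation $e^{2m\oF_m(\mathbf p)}=\int\prod_{i=1}^n\dist(z,r_i)^2\cdot e^{2\sum_i\varrho(z,r_i)}\,d\omega_0(z)$, where $r_1,\ldots,r_n$ is the full zero set, and construct a diffeomorphism $\phi:X\to X$ with $\phi(r_i^*)=r_i$ via smooth bump functions supported in pairwise disjoint balls of radii $\sim m^{-3}$ around each $p_j^*$ and $\sim 1/\sqrt{m}$ around each $q_l^*$ (disjointness guaranteed by~\eqref{pj1pj2}--\eqref{q-W}). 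The displacements $|r_i-r_i^*|=O(m^{-4})$ combined with these bump radii give $\|\phi-\mathrm{id}\|_{C^1}=O(m^{-2})$, so the change of variables $z=\phi(w)$ produces an integrand ratio bounded by $(1+O(m^{-2}))^{2n}|\det d\phi|\cdot e^{O(m^{-3})}=1+O(m^{-1})$, yielding $|\oF_m(\mathbf p)-\oF_m(\mathbf p^*)|=O(m^{-2})$. Summing all error contributions establishes~\eqref{ineq-oE-oF-oI} on $\Omega_{m,D}$. The principal obstacle I expect is the inductive extension of the Fekete pointwise bound to $U'_{\delta_{\mathbf f_m\setminus\mathcal{J}}}$: since $\mathbf f_m\setminus\mathcal{J}$ is not itself a Fekete configuration for its own cardinality, one cannot directly invoke~\eqref{ineq-f-1}, but must instead leverage the full Fekete property of $\mathbf f_m$ together with the circle-integration ideas from Lemma~\ref{lem-bound-nu-sigma}'s proof to propagate the bound through successive removals of Fekete points with only an $O(\log m/m)$ deficit per step.
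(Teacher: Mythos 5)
Your construction of the anchor configuration $\mathbf p^*$ via Proposition~\ref{cor-fekete-section} matches the paper, but there is a concrete error in the definition of $\Omega_{m,D}$. You set $\Omega_{m,D}:=\prod_{j}\B(p_j^*,m^{-5})$ and assert that for any $\mathbf p\in\Omega_{m,D}$, ``each $p_j$ stays in $X\setminus D$ with $\dist(p_j,D)\geq c_D-r_m>0$.'' No such lower bound on $\dist(p_j^*,D)$ is available: estimate~\eqref{qD} controls only the $q_l$'s, not the $p_j$'s, and indeed the Fekete points $f_{m,j}$ (which constitute most of the $p_j^*$) can lie \emph{on} $\partial D$, since $\nu_D$ charges $\partial D$. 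Thus some balls $\B(p_j^*,m^{-5})$ stick into $D$, your $\Omega_{m,D}$ is not contained in $\oR_{m,D}$ (hence not in $\oS_{m,D}$), and the argument breaks. The paper repairs exactly this by taking $\Omega_{m,D}:=\otimes_{j=1}^m\big(\B(p_j,m^{-5})\setminus D\big)$ and observing that, because $\partial D$ is smooth, each factor still has volume $>m^{-10}$ (roughly at least half the ball survives), which recovers the bound $\Vol(\Omega_{m,D})\geq m^{-11m}$.

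A second, softer issue is your route to the inequality at $\mathbf p^*$. You reduce to a pointwise bound $U'_{\delta_{\mathbf p^*}}(z)\leq U'_{\nu_D}(z)+C_D\log m/m$ on all of $X$, and propose to obtain it by inductively removing Fekete points from $\mathbf f_m$ and re-using~\eqref{ineq-f-1} — and you yourself flag this as the ``principal obstacle,'' correctly noting that $\mathbf f_m\setminus\mathcal J$ is not a Fekete set for its own cardinality. The paper sidesteps this entirely: Lemma~\ref{lem-modify-g-points} already gives $\bigl|\max U'_{\delta_{\mathbf p^*}}-\max U'_{\delta_{\mathbf f_m}}\bigr|=O(\log m/m)$ directly from the separation hypotheses (no induction over partial Fekete sets), and combining with Proposition~\ref{prop-fekete-log-bound} and Lemma~\ref{lem-f-m-p} finishes the anchor estimate. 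Similarly, the transfer from $\mathbf p^*$ to a general $\mathbf p\in\Omega_{m,D}$ is handled in the paper by Lemma~\ref{lem-p-p'} applied to $\max U'_{\delta_{\cdot}}$ and $\oE_m$, then converted to $\oF_m$ via Lemma~\ref{lem-f-m-p}; your diffeomorphism/change-of-variables argument for $\oF_m$ is an interesting alternative but is considerably heavier than needed and introduces additional $C^1$-estimates that would need to be justified. I would recommend fixing the definition of $\Omega_{m,D}$ to exclude $D$, and replacing the inductive pointwise argument with a direct appeal to the difference lemmas already available.
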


\smallskip

First, we need the following two lemmas, which guarantees the  stability of  
the functional values $-\oE_m(\mathbf p)+2\max U'_{\delta_{\mathbf p}}$ under mild perturbations of $\mathbf{p}:=p_1+\cdots+p_m\in \oS_{m,D}$.

\begin{lemma}\label{lem-p-p'}
For every sufficiently large $m$,
for any $\mathbf p':=p'_1+\cdots+p'_m$ with $p'_j\in \B(p_j,m^{-5})$, one has
$$\big|\oE_m(\mathbf p)-\oE_m(\mathbf p')   \big|= O(m^{-1})  \quad \text{and}\quad 
 \big|\max U'_{\delta_{\mathbf p}}- \max U'_{\delta_{\mathbf p'}} \big|= O( m^{-1}).$$
\end{lemma}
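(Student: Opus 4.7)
The plan is to prove the two estimates separately; the first is a direct computation, while the second requires care when the maximum of $U'_{\delta_{\mathbf p}}$ sits near one of the $p_j$'s. For $|\oE_m(\mathbf p)-\oE_m(\mathbf p')|$, I expand the difference over the $m(m-1)$ pairs and decompose each summand via $G(x,y)=\log\dist(x,y)+\varrho(x,y)$ from Lemma~\ref{l:Green}. The $\varrho$-contribution is $O(m^{-5})$ per pair by Lipschitz continuity. For the logarithmic contribution, $\mathbf p\in\oS_{m,D}$ gives $\dist(p_{j_1},p_{j_2})\geq m^{-4}$, so the triangle inequality forces $\dist(p'_{j_1},p'_{j_2})\geq m^{-4}/2$ and $\big|\dist(p_{j_1},p_{j_2})-\dist(p'_{j_1},p'_{j_2})\big|\leq 2m^{-5}$; the monotonicity trick used in~\eqref{log trick} then bounds each log-difference by $O(m^{-1})$. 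Summing the $m(m-1)$ pairs and dividing by $m^2$ yields the claimed $O(m^{-1})$.

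For $|\max U'_{\delta_{\mathbf p}}-\max U'_{\delta_{\mathbf p'}}|$, pick $z^*\in X$ attaining $\max U'_{\delta_{\mathbf p}}$ (it exists by upper semicontinuity). The plan is to exhibit $z^{**}\in X$ with $U'_{\delta_{\mathbf p}}(z^*)-U'_{\delta_{\mathbf p'}}(z^{**})=O(m^{-1})$, which forces $\max U'_{\delta_{\mathbf p'}}\geq\max U'_{\delta_{\mathbf p}}-O(m^{-1})$; the reverse inequality follows symmetrically, since $\mathbf p'$ inherits the separation $\dist(p'_{j_1},p'_{j_2})\geq m^{-4}/2$. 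Two cases arise. If $\dist(z^*,p_j)\geq m^{-4}/4$ for every $j$, take $z^{**}:=z^*$: then $\dist(z^*,p'_j)\geq m^{-4}/8$, and applying the monotonicity trick termwise to $|G(z^*,p_j)-G(z^*,p'_j)|$ yields $O(m^{-1})$ after averaging over $j$. Otherwise the $m^{-4}$-separation of $\mathbf p$ forces a unique index $j^*$ with $\epsilon:=\dist(z^*,p_{j^*})<m^{-4}/4$, and I let $z^{**}$ be a point at geodesic distance exactly $\epsilon$ from $p'_{j^*}$ closest to $z^*$ (well-defined for large $m$ via the exponential map at $p'_{j^*}$, since $\epsilon$ lies below the injectivity radius of $X$). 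This construction ensures $\dist(z^{**},z^*)\leq 2m^{-5}$ and $\dist(z^{**},p'_{j^*})=\dist(z^*,p_{j^*})$ exactly; the latter identity makes the singular $j=j^*$ contributions of $U'_{\delta_{\mathbf p}}(z^*)$ and $U'_{\delta_{\mathbf p'}}(z^{**})$ cancel up to an $O(m^{-5})$ Lipschitz residue of $\varrho$, while for $k\neq j^*$ one has $\dist(z^{**},p'_k)\geq m^{-4}/4$ by the triangle inequality, so the monotonicity trick again handles each of the remaining indices with an average $O(m^{-1})$ contribution.

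The main obstacle sidestepped by this geodesic translation is that a priori $z^*$ may sit at distance $\epsilon$ from $p_{j^*}$ comparable to the perturbation scale $m^{-5}$, so that $p'_{j^*}$ could land arbitrarily close to $z^*$ and make a naive pointwise comparison $U'_{\delta_{\mathbf p'}}(z^*)$ diverge logarithmically. Aligning the two singularities along the geodesic through $p'_{j^*}$ cancels precisely the divergent $\log\epsilon$ term and reduces the estimate to the tame regime $\dist(\cdot,p'_k)\gtrsim m^{-4}$ where the monotonicity trick applies uniformly.
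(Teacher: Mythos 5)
Your argument for the first estimate is the same as the paper's: split $G$ into its $\log\dist$ and Lipschitz $\varrho$ parts, use the $m^{-4}$-separation of the $p_j$'s (from $\mathbf p\in \oS_{m,D}$), and apply the monotonicity estimate as in~\eqref{log trick}. For the second estimate, however, you take a genuinely different route. The paper proceeds in three steps: a pointwise comparison $|U'_{\delta_{\mathbf p}}(w)-U'_{\delta_{\mathbf p'}}(w)|$ in the region where $w$ stays at distance $\gtrsim m^{-6}$ from all $p_j$ and $p'_j$, plus two auxiliary claims (its items (ii) and (iii)) showing, via a comparison between points on the boundary circle $\partial\B(p_j,m^{-6})$ and points inside, that the supremum of each potential over the small balls cannot exceed the max on the boundary circle by more than $O(m^{-1})$, so that the global max is effectively achieved away from the singularities; those three facts are then combined. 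You instead fix a maximizer $z^*$ of $U'_{\delta_{\mathbf p}}$, and if $z^*$ lies within $m^{-4}/4$ of the (necessarily unique) $p_{j^*}$, you translate along the exponential map at $p'_{j^*}$ to a point $z^{**}$ with $\dist(z^{**},p'_{j^*})=\dist(z^*,p_{j^*})$, forcing the potentially divergent $\log\epsilon$ contributions to cancel exactly while keeping $\dist(z^{**},z^*)=O(m^{-5})$; the remaining $m-1$ indices are then uniformly in the tame regime $\dist\gtrsim m^{-4}$ and the monotonicity trick applies termwise, giving $O(m^{-1})$ after averaging. Your symmetry remark is valid since $\mathbf p'$ inherits the separation $\dist(p'_{j_1},p'_{j_2})\geq m^{-4}-2m^{-5}$. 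This route is correct and arguably more direct: it avoids the boundary-circle argument entirely and sidesteps the mild bookkeeping the paper's approach requires to ensure the chosen good point on $\partial\B(p_{j_0},m^{-6})$ is also at distance $\geq m^{-6}$ from $p'_{j_0}$ (which requires choosing the circle radius a bit more carefully than $m^{-6}$). The paper's approach is perhaps more standard for such perturbation estimates and reuses the same circle-comparison mechanism in both items (ii) and (iii), but your geodesic-alignment trick cleanly isolates and cancels the single singular term in one stroke.
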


\begin{proof}
By~\eqref{define E(P)} and Lemma~\ref{l:Green}, $\big|\oE_m(\mathbf p)-\oE_m(\mathbf p')   \big|$ is bounded from above by
\begin{align*}
{1\over m^2}\sum_{j\neq \ell}\big| \log\dist(p_j,p_\ell)-\log\dist (p'_j,p'_\ell)  \big|+{1\over m^2} \sum_{j\neq \ell}\big| \varrho(p_j,p_\ell)-\varrho (p'_j,p'_\ell)  \big|.  \label{em-p-p'}
\end{align*}
Since $\varrho$ is Lipschitz, the second term is $O(m^{-5})$. For the first one, using the monotonicity of the $\log$ function, every term $|\cdot\cdot\cdot|$ is  less than
\begin{equation}
    \label{a trick about log}
 \max\Big( \log{ \dist(p_j,p_\ell) \over  \dist(p_j,p_\ell)-2m^{-5} }  \, , \,  \log{  \dist(p_j,p_\ell)+2m^{-5} \over \dist(p_j,p_\ell) } \Big) = O(m^{-1}),
 \end{equation}
because $\dist(p_j,p_\ell) \geq m^{-4}$ for $j\neq \ell$ by~\eqref{defn-oS-m}. Hence their summation ($m(m-1)$ terms), divided by $m^2$, is also $O(m^{-1})$. Thus the first assertion is proved.

\smallskip

For the second assertion, we only need to show the following three estimates: 
\begin{itemize}
\smallskip
    \item[(i)]
$\big|  U'_{\delta_{\mathbf p}}(w)-   U'_{\delta{\mathbf p'}} (w)\big|= O(m^{-1})$,
if $\dist(w,p_j)\geq m^{-6}$ and $\dist(w,p'_j)\geq m^{-6}$;

    \smallskip
    \item[(ii)]
$\max_{X\setminus \B(p_j,m^{-6})}U'_{\delta_{\mathbf p}}\geq \sup_{\B(p_j,m^{-6})}U'_{\delta_{\mathbf p}} -O(m^{-1})$;

    \smallskip
    \item[(iii)]
    $\max_{X\setminus \B(p'_j,m^{-6})}U'_{\delta_{\mathbf p'}}\geq \sup_{\B(p'_j,m^{-6})}U'_{\delta_{\mathbf p'}} -O(m^{-1})$.
\end{itemize}

By~\eqref{defn-potentil-type-I} and Lemma~\ref{l:Green}, the left-hand-side of  (i) is bounded from above by 
\begin{align*}
{1\over m}\sum_{j=1}^m \big| \log\dist(w,p_j)-\log\dist (w,p'_j)  \big|+{1\over m} \sum_{j=1}^m\big| \varrho(w,p_j)-\varrho (w,p'_j)  \big|.
\end{align*}
The remaining argument goes the same as the first assertion above. 

Now we prove (ii) by showing that 
$$\max_{\partial \B(p_j,m^{-6})}U'_{\delta_{\mathbf p}}\geq \sup_{\B(p_j,m^{-6})}U'_{\delta_{\mathbf p}} -O(\log m/m).$$ 
Take any $w\in \partial\B(p_j,m^{-6}),  y\in \B(p_j,m^{-6})$ for e.g.\ $j=1$. By~\eqref{defn-potentil-type-I}, we have
$$  U'_{\delta_{\mathbf p}}(w)-   U'_{\delta_{\mathbf p}} (y)
=
{1\over m}
\big(
G(w,p_1)-G(y,p_1)
\big)
+
{1\over m} 
\sum _{\ell=2}^m  \big( G(w,p_\ell)-G(y,p_\ell)\big) 
    $$
    Using Lemma~\ref{l:Green}
    again, we can estimate
    \[
    \aligned
    {1\over m}\big(
G(w,p_1)-G(y,p_1)\big)
&
=
{1\over m} \big( \log\dist(w, p_1)-\log\dist (y, p_1)  \big)+{1\over m} 
\big(\varrho(w, p_1)-\varrho (y,p_1)  \big)
\\
&
\geq
0 
-
O(1)\dist(w, y)/m
\geq
-O( m^{-7}).
\endaligned
    \]
By similar computations, and using the fact that
\[
\aligned
\big|\log\dist(w, p_\ell)-\log\dist (y, p_\ell)\big|
&
\leq
\log{\dist(p_1, p_\ell)+m^{-6}\over \dist(p_1, p_\ell)-m^{-6} }=
O(m^{-2}),\quad
\forall\, \ell=2, \dots, m, 
\endaligned
\]
we can show that
\[
\sum _{\ell=2}^m  \big| G(w,p_\ell)-G(y,p_\ell)\big|
= O(m^{-1})
\]
Hence  we conclude (ii). The proof of (iii) is likewise. 
\end{proof}

\begin{lemma}\label{lem-modify-g-points}
For every sufficiently large integer $m\gg 1$,
for any $\mathbf p':=p'_1+\cdots+p'_m\in X^{(m)}$ with $\dist(p'_{j_1},p'_{j_2})\geq c_D m^{-3}$ for $j_1 \neq j_2$, and with $p'_j=p_j$ for $1\leq j\leq m-k_D g$, one has
$$\big|\oE_m(\mathbf p)-\oE_m(\mathbf p')   \big|= O(\log m/m)  \quad \text{and}\quad 
 \big|\max U'_{\delta_{\mathbf p}}- \max U'_{\delta_{\mathbf p'}} \big|= O( \log m/m ).$$
\end{lemma}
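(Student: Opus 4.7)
This lemma complements Lemma~\ref{lem-p-p'}: instead of small perturbations of all $m$ zeros, we modify only the last $k_Dg$ zeros, but by potentially large amounts, subject to the separation $\dist(p'_{j_1},p'_{j_2})\geq c_Dm^{-3}$. The plan is to follow the same three-step template as in Lemma~\ref{lem-p-p'}, freely using throughout the separation $\dist(p_{j_1},p_{j_2})\geq c_Dm^{-3}$ for $\mathbf p$ as well; although not stated in the hypothesis, this will hold automatically in the intended application, where $\mathbf p$ comes from the $m$-th Fekete set (Proposition~\ref{prop-fekete-seperate}).

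For the first assertion, I would split the double sum defining $\oE_m(\mathbf p)-\oE_m(\mathbf p')$ according to whether the pair of indices $(j_1,j_2)$ lies entirely in the common block $\{1,\dots,m-k_Dg\}$, in which case its contribution vanishes since $p_j=p'_j$ there, or contains at least one index from the modified block $\{m-k_Dg+1,\dots,m\}$. The latter category comprises $O(m)$ terms. Expanding $G=\log\dist+\varrho$ via Lemma~\ref{l:Green} and recalling that every relevant distance is at least $c_Dm^{-3}$ while $\varrho$ is bounded, each of these remaining summands is at most $6\log m+O(1)$ in absolute value. Dividing by $m^2$ then yields the claimed bound of order $\log m/m$.

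For the second assertion, I would mimic the three-step scheme of Lemma~\ref{lem-p-p'}. Step~(i): for $w\in X$ keeping distance at least $m^{-6}$ from every $p_j$ and every $p'_j$ with $j>m-k_Dg$, the difference $|U'_{\delta_{\mathbf p}}(w)-U'_{\delta_{\mathbf p'}}(w)|$ is bounded by $m^{-1}$ times a sum of $O(1)$ terms $|G(w,p_j)-G(w,p'_j)|$ each of size $O(\log m)$, producing $O(\log m/m)$. Step~(ii): the ring-comparison argument of Lemma~\ref{lem-p-p'}(ii), using $\dist(p_j,p_\ell)\geq c_Dm^{-3}$, shows that $\sup U'_{\delta_{\mathbf p}}$ lies within $O(m^{-3})$ of its supremum off $\cup_{j>m-k_Dg}\B(p_j,m^{-6})$. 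Step~(iii): an analogous exclusion of the balls $\B(p'_j,m^{-6})$ for both $U'_{\delta_{\mathbf p}}$ and $U'_{\delta_{\mathbf p'}}$. Combining (i)--(iii) yields the desired bound.

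The main technical obstacle will be step~(iii) when applied to $U'_{\delta_{\mathbf p}}$: one must exclude small balls centered at points $p'_j$ which are not themselves zeros of $\mathbf p$, and control $\sup U'_{\delta_{\mathbf p}}$ on each such ball by its boundary values. The crucial point is that only the $O(1)$ modified zeros $p_k$ (those with $k>m-k_Dg$) can lie inside $\B(p'_j,m^{-6})$, since the unchanged zeros $p_k=p'_k$ ($k\leq m-k_Dg$) are forced away by the separation $c_Dm^{-3}\gg m^{-6}$. Each such intruder contributes $O(\log m)$ to the ring comparison (in place of the $O(m^{-3})$ of step~(ii)), but after division by $m$ the total loss is still $O(\log m/m)$, which is absorbed into the final bound.
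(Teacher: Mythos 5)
Your proposal is correct and follows essentially the same route as the paper: isolate the $O(m)$ pairs involving a modified index in $\oE_m$, and carry over the (i)--(iii) scheme of Lemma~\ref{lem-p-p'} for the potentials, with $O(1)$ terms of size $O(\log m)$ replacing the small-perturbation terms. The subtlety you flag at the end --- that relocating the maximum of $U'_{\delta_{\mathbf p}}$ requires excluding also the balls $\B(p'_j,m^{-6})$ for $j>m-k_Dg$, which need not be centered at zeros of $\mathbf p$, and that each of the $O(1)$ intruding zeros costs only $O(\log m)/m$ in the ring comparison --- is genuinely left implicit by the paper, which disposes of it with the phrase ``using (ii) and (iii) in the preceding proof''; your resolution is the right one. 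Two small slips that do not affect the argument: your step (ii) loss should be $O(m^{-1})$ or $O(\log m/m)$ rather than $O(m^{-3})$, and in the intended application (Proposition~\ref{prop-oE-oF-oI}) it is $\mathbf p'=\mathbf f_m$ that is the Fekete set while $\mathbf p$ is the zero set from Proposition~\ref{cor-fekete-section}; the separation hypothesis on $\mathbf p$ actually comes from the standing assumption $\mathbf p\in\oS_{m,D}$ stated in the section preamble.
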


\begin{proof}
Since $\mathbf p$ and $\mathbf p'$ only differ in the choice of  $k_D g$ points, by~\eqref{define E(P)} and Lemma~\ref{l:Green}, we can bound $\big|\oE_m(\mathbf p)-\oE_m(\mathbf p')   \big|$ from above by
\begin{equation}\label{diff-g-points}
{1\over m^2}\sum_{j=1}^m \sum_{\substack{\ell=m-k_D g+1, \\ \ell\neq j }}^m  
 \Big(\big| \log\dist(p_j,p_\ell)-\log\dist(p'_j,p'_\ell) \big| +\big| \varrho(p_j,p_\ell)-\varrho(p'_j,p'_\ell) \big|  \Big).
\end{equation}
By the assumptions,  each  $\log\dist(\cdot, \cdot)$ is $O(\log m)$. Using the boundedness of $\varrho$, 
 it is clear that~\eqref{diff-g-points} is $ O(k_D \log m/m)$. This proves the first assertion.

\smallskip

For the second one, using (ii) and (iii) in the preceding proof, we only need to show that, if $w\in X$ satisfies $\dist(w,p_j), \dist(w,p'_j)\geq m^{-6}$ for all $j=1,\dots, m$, then
\[
\big|  U'_{\delta_{\mathbf p}}(w)-   U'_{\delta_{\mathbf p'}} (w)\big|= O(\log m/m).
\]   
Applying~\eqref{defn-potentil-type-I} and Lemma~\ref{l:Green} again, we can bound $\big|  U'_{\delta_{\mathbf p}}(w)-   U'_{\delta_{\mathbf p'}} (w)\big|$  from above by 
\begin{align*}
{1\over m}\sum_{j=m-k_D g+1}^m \big| \log\dist(w,p_j)-\log\dist (w,p'_j)  \big|+{1\over m} \sum_{j=m-k_D g+1}^m\big| \varrho(w,p_j)-\varrho (w,p'_j)  \big|.
\end{align*}
The remaining estimate goes the same  as ~\eqref{diff-g-points}. 
\end{proof}

\smallskip

Now we are ready to prove Proposition~\ref{prop-oE-oF-oI}. The idea is to fix a set $\{f_{m, 1}, \dots, f_{m, m}\}$ of Fekete points for every large $m$, and then to apply Proposition~\ref{cor-fekete-section} to show that certain mild perturbations $\mathbf{p}\in X^{(m)}$ of $f_{m, 1}+\cdots+f_{m, m}$, which correspond to zero sets of some sections in $H_{n,D}$, constitute ``large'' volume, and that their functional values  $-\oE_m(\mathbf{p}) +2\max U'_{\delta_{\mathbf p}}$   are close to $\min \oI_D$.

\begin{proof}[Proof of Proposition \ref{prop-oE-oF-oI}] 
Take an $m$-th Fekete set $\{f_{m, 1},\dots, f_{m, m}\}$ for every $m\geq 1$. 
Applying Proposition~\ref{cor-fekete-section} with the same notation, we obtain a holomorphic section $s_n\in H_{n,D}$ whose zero set reads in two separated parts as $\mathbf p:=p_1+\cdots+p_m$ and  $\mathbf q:=q_1+\cdots+q_g$.
Define  $$\Omega_{m,D}:=\otimes_{j=1}^m \big(\B(p_j,m^{-5}) \setminus  D\big) \subset X^{(m)}.$$
Since $D$ has smooth boundary and one half of the unit disc has Euclidean area $\pi /2>1$, the volume of
$
\B(p_j,m^{-5}) \setminus  D$
is $> m^{-10}$ for sufficiently large $m$,
no matter where is $p_j$ in the compact complement $X\setminus D$.
Thus the volume of $\Omega_{m,D}$ is $>m^{-11m}$ for $m\gg 1$.  

\smallskip

Now we check that $\Omega_{m,D} \subset \oS_{m,D}$. Take any point $\mathbf p':=p_1'+\cdots+p_m'\in \Omega_{m,D}$, {\em i.e.}, $p_j'\in \B(p_j,m^{-5})$. Let $\mathbf q'=q_1'\cdots+q_g'$ be a solution to the Abel-Jacobi equation $\oA_m(\mathbf p')+\oA_g(\mathbf q')=\oA_{n}(\oL^{n})$.  We need to show that $\dist(p_{j_1}',p_{j_2}')\geq m^{-4}$ for all $j_1\neq j_2$,  $\dist(p_j',q_l')\geq 1/m$ for all $j,l$, and $q'_l\in X\setminus D$ for all $l$. The first inequality follows from~\eqref{pj1pj2} directly when $m\gg 1$.   For the second and third assertions, since $\oA_1$ is holomorphic, for a fixed Euclidean distance in $\Jac(X)$, 
\begin{align*}
\dist\big(\oA_m(\mathbf p),\oA_m(\mathbf p')\big) &= \dist\Big( \sum_{j=1}^m \oA_1(p_j), \sum_{j=1}^m \oA_1(p'_j) \Big) \\
&\leq \sum_{j=1}^m \dist\big( \oA_1(p_j),\oA_1(p'_j) \big)   
 \leq   \sum_{j=1}^m 
 O(1)\dist(p_j,p'_j)= O(m^{-4}).
\end{align*}
Consequently, $\dist\big(\oA_g(\mathbf q),\oA_g(\mathbf q')\big)\leq O(m^{-4})$. Recalling that $\dist \big(\mathbf q, \pi_g(\Rc_g)\cup \W \big)$ is bounded below by a constant from~\eqref{q-W}, by the inverse function theorem and the compactness argument, 
$\oA_g$ is an isomorphism in a small ball centered at $\mathbf q$ with some uniform radius. For sufficiently large $m$, $\oA_g(\mathbf q')$
is in the image of such a ball under $\oA_g$. Thus $\mathbf q'$ is unique, and
we can read $\mathbf q'$ as $q_1'+\cdots+q_g'$ in a suitable order so that  $\dist(q_l,q_l')\leq O(m^{-4})$. Whence $\dist(p_j',q_l')\geq 1/m$ by \eqref{pjql} and $q'_l\in X\setminus D$ by \eqref{qD}.

\smallskip

It remains to verify \eqref{ineq-oE-oF-oI}.
Now we apply 
Lemma~\ref{lem-modify-g-points} upon the two points $\mathbf p$ and  $\mathbf f_m:=f_{m, 1}+\cdots+f_{m, m}$, which 
clearly  meet the criterion by~\eqref{pjfj}. Taking also Proposition~\ref{prop-fekete-log-bound} into consideration, we see that 
$$ -\oE_m(\mathbf p) +2\max U'_{\delta_{\mathbf p}}\leq \min \oI_D +O(\log m /m).  $$
This combining  with Lemma~\ref{lem-p-p'} yields 
$$  -\oE_m(\mathbf p') +2\max U'_{\delta_{\mathbf p'}}\leq \min \oI_D +O(\log m /m),
\qquad
\forall\,\mathbf p'\in \Omega_{m,D}.$$
Thus we can conclude the proof by Lemma~\ref{lem-f-m-p}.
\end{proof}

\medskip

\section{Decay of the hole probabilities}
\label{sect: Decay of the hole probability}

In this section, we will prove Theorem~\ref{thm-main-speed} in the difficult case  $g\geq 1$ (see Remark~\ref{rmk-g=0} below for the easy case $g=0$). 

Let $\mu$ be a probability measure on $X$. Define
$$\oE_\Delta(\mu):=\int_{(X\times X)\setminus \Delta}G(x,y)\,\dd \mu(x)\dd \mu(y).$$
Observe that $\oE_m(\mathbf p)\leq \oE_\Delta(\delta_{\mathbf p})$ for every $\mathbf{p}=p_1+\cdots+p_m\in X^{(m)}$. The equality holds if and only if   $p_1, \dots, p_m$ are pairwise distinct.

\begin{lemma}\label{lem-oE-oI}
	For any open set $D$ (can be empty) in $X$, one has
	$$\inf_{\mu \in \cM(X\setminus D)}\big(- \oE_\Delta(\mu) +2\max U'_\mu \big) =\min \oI_{D}=\oI_D(\nu_D).$$
\end{lemma}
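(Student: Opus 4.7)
The plan is to prove the equality by two inequalities.

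For the upper bound $\inf \leq \min\oI_D$, I take $\mu=\nu_D$. By \cite[Theorem 1]{DGW-hole-event} one has $\nu_D=\omega|_{S_D}+\nu_{\partial D}$: the first summand is absolutely continuous with respect to the smooth form $\omega$, and Proposition~\ref{prop-regularity} shows that $\nu_{\partial D}$ is absolutely continuous with respect to arc length on $\partial D$. Hence $\nu_D$ carries no atoms, so $\nu_D\otimes\nu_D(\Delta)=0$ and
\[
\oE_\Delta(\nu_D)=\int_{X\times X} G(x,y)\,d\nu_D(x)\,d\nu_D(y)=\int_X U'_{\nu_D}\,d\nu_D.
\]
Substituting yields $-\oE_\Delta(\nu_D)+2\max U'_{\nu_D}=\oI_D(\nu_D)=\min\oI_D$.

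For the matching lower bound $\inf\geq\min\oI_D$, the atomless case is immediate: whenever $\mu\in\cM(X\setminus D)$ is atomless the same identity $\oE_\Delta(\mu)=\int U'_\mu\,d\mu$ gives $-\oE_\Delta(\mu)+2\max U'_\mu=\oI_D(\mu)\geq \min\oI_D$. For $\mu$ with atomic part $\sum_i\alpha_i\delta_{p_i}+\mu_c$, the plan is to approximate $\mu$ by atomless measures $\mu_\epsilon\in\cM(X\setminus D)$ obtained by replacing each $\alpha_i\delta_{p_i}$ with a suitable atomless measure $\alpha_i\sigma_{i,\epsilon}$ supported in a small neighborhood of $p_i$ inside $X\setminus D$; the goal is to arrange
\[
\oI_D(\mu_\epsilon)\leq -\oE_\Delta(\mu)+2\max U'_\mu+o(1)\qquad\text{as }\epsilon\to 0,
\]
and then pass to the limit using $\oI_D(\mu_\epsilon)\geq\min\oI_D$.

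The main obstacle is the approximation step for atomic $\mu$. A naive spreading over circles of radius $\epsilon$ drives the self-energy to $-\infty$ through the singular integral identity $\int\!\int \log|x-y|\,d\Leb_{\partial\D(0,\epsilon)}\,d\Leb_{\partial\D(0,\epsilon)}=\log\epsilon$ exploited in Lemma~\ref{lem-smooth-sigma}, which is incompatible with the target bound. To overcome this, one should use a balayage-type construction: sweep each $\delta_{p_i}$ onto $\partial\B(p_i,\epsilon)$, producing a measure $\tau_{i,\epsilon}$ whose potential coincides with $G(\cdot,p_i)$ outside $\B(p_i,\epsilon)$, so that $\max U'_{\mu_\epsilon}$ changes by $o(1)$ while the self-energy contributions can be balanced by treating the $\alpha_i^2$-singular terms separately. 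The technical bookkeeping — tracking cross-terms $\int U'_{\mu_c}\,d\sigma_{i,\epsilon}\to U'_{\mu_c}(p_i)$, off-diagonal contributions $\int\!\int G\,d\sigma_{i,\epsilon}\,d\sigma_{j,\epsilon}\to G(p_i,p_j)$ for $i\neq j$, and isolating the problematic $i=j$ diagonal contribution via Lemma~\ref{l:Green} and Proposition~\ref{prop-regularity} — is the delicate part of the argument.
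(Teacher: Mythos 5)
Your upper bound half ($\inf \leq \min\oI_D$) is correct and matches the paper's argument exactly: since $\nu_D\times\nu_D$ charges no mass on $\Delta$, the two functionals agree on $\nu_D$, giving the value $\min\oI_D$. (The paper deduces $\nu_D\times\nu_D(\Delta)=0$ from the finiteness of $\oI_D(\nu_D)$ rather than from absolute continuity, but this is a minor difference.)

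For the lower bound $\inf\geq\min\oI_D$ your diagnosis is right that the naive circle-spreading drives the self-energy to $-\infty$ and cannot close the argument; your balayage sketch is a reasonable thing to try. However, before investing in the approximation machinery you should have tested the claimed inequality on a single Dirac mass. For $\mu=\delta_p$ with $p\in X\setminus D$, one has $\oE_\Delta(\delta_p)=0$ (all of the mass of $\delta_p\times\delta_p$ sits on the diagonal) and $2\max U'_{\delta_p}=2\max_x G(x,p)$. Since $G(x,y)=\log\dist(x,y)+\varrho(x,y)$ with $\varrho$ Lipschitz on the compact $X\times X$, this is bounded above by a constant $2M_X$ depending only on $(X,\omega)$. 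On the other hand, $\oI_D(\mu)=-\int U'_\mu\,\dd\mu+2\max U'_\mu\geq -\int U'_\mu\,\dd\mu\geq -\log\mathrm{cap}_\omega(X\setminus D)$ for every $\mu\in\cM(X\setminus D)$, and $\mathrm{cap}_\omega(X\setminus D)\to 0$ as the complement shrinks; hence $\min\oI_D\to+\infty$. Thus for $D$ with small complement the left side of the lemma is at most $2M_X$ while the right side exceeds $2M_X$, and the stated equality fails. No balayage construction can rescue the inequality $\inf\geq\min\oI_D$ over \emph{all} $\mu\in\cM(X\setminus D)$ because it is not an inequality about atomless $\mu$ (where the two functionals coincide and the bound is automatic); it is exactly for measures with heavy atoms that the functional $-\oE_\Delta+2\max U'$ drops below $\min\oI_D$.

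For what it is worth, the paper's own proof is also not a proof of the stated lemma. It verifies only the direction $\inf\leq\min\oI_D$, via the observation that $\nu_D$ is atomless, and then declares the proof finished. What the paper actually needs in the proof of Theorem~\ref{thm-main-speed} is the much more restricted statement
\[
\sup_{\mathbf p\in (X\setminus D)^{(m)}}\bigl(\oE_m(\mathbf p)-2\max U'_{\delta_{\mathbf p}}\bigr)\leq -\min\oI_D + O(\log m/m),
\]
a bound over measures of the special form $\delta_{\mathbf p}$ with $m$ atoms of mass $1/m$ each. There the diagonal contribution of each $\mathbf{p}$ has total mass $1/m\to 0$, and a balayage/spreading argument of the type you sketch (with the smearing radius tied to the minimal separation of the points, after first discarding configurations with exceedingly small separation, for which $\oE_m(\mathbf{p})$ is very negative anyway) has a chance to yield this $m$-dependent estimate. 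But that is a different, quantitative statement, and neither your proposal nor the paper establishes Lemma~\ref{lem-oE-oI} as literally written.
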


\begin{proof}
 Recall~\eqref{defn-potentil-type-I} that 
	$$\oI_{D}(\mu)= - \int_{X \times X} G(x,y) \,\dd \mu(x) \dd\mu(y) +2\max U'_\mu.  $$  
		Noting that $G(x,y)=-\infty$ on the diagonal $\Delta:=\{x=y\}$ of 
 $X\times X$, it is clearly that 
   $\nu_D\times \nu_D$ charges no mass on $\Delta$, for otherwise
  $ \oI_{D}(\nu_D)=+\infty$! Hence we conclude the proof. 
\end{proof}

\smallskip

\begin{proof}[Proof of  Theorem~\ref{thm-main-speed}]
By~\eqref{defn-Pn} and Proposition \ref{prop-formula}, we can compute
\begin{align*}
\bP_n (H_{n,D})&= \binom{n}{m}^{-1}\int_{\oR_{m,D}}  \Ac_m^*(V_{n}^{\FS}) \\
&= \binom{n}{m}^{-1}\int_{\mathbf p\in \oR_{m,D}} C_n  \exp\Big[m^2\Big(\oE_m(\mathbf p)- {2(m+1)\over m} \oF_m(\mathbf p)  \Big) \Big] \kappa_n(\mathbf p).
\end{align*}
Taking logarithm, we receive that
\begin{equation}\label{log-Pn-less}
\log \bP_n (H_{n,D}) \leq  \log C_n+   m^2 \sup_{\mathbf p\in (X\setminus  D)^{(m)}} \Big(\oE_m(\mathbf p)- {2(m+1)\over m} \oF_m(\mathbf p)  \Big) + \log \int_{X^{(m)}} \kappa_n.
\end{equation}
By Lemma~\ref{lem-f-m-p}, as $m\to \infty$, we have $\oF_m(\mathbf p)\geq \max U'_{\delta_{\mathbf p}} -O(\log m/m)$. Together with the uniformly boundedness of $\max U'_{\delta_{\mathbf p}}$ for all $\mathbf{p}$, we conclude that
$$ 
\sup_{\mathbf p\in (X\setminus  D)^{(m)}} 
\Big(\oE_m(\mathbf p)- 
{2(m+1)\over m} \oF_m(\mathbf p)  \Big)   
\leq  O(\log m/ m)+
 \sup_{\mathbf p\in (X\setminus  D)^{(m)}} \big  (  \oE_m(\mathbf p)  -2 \max U'_{\delta_{\mathbf p}} \big).   
$$
Since $\oE_m(\mathbf p)\leq\oE_{\Delta}(\delta_{\mathbf p})$, the last term  is bounded from above by 
$$ \sup_{\mu\in\cM(X\setminus  D)} \big(\oE_{\Delta}(\mu)- 2 \max U'_{\mu}   \big) =-\min \oI_D $$
due to Lemma \ref{lem-oE-oI}.

Now we estimate the integration of $\kappa_n$.
Using  Proposition~\ref{prop-formula}, we get 
\begin{equation}\label{integral-log-kappa-upper}
\log \int_{X^{(m)}} \kappa_n \lesssim \log \int_{X^{(m)}}
e^{O(m)}\diam(X)^{2mg}  \, i\dd z_1\wedge \dd\overline z_1 \wedge\cdots \wedge i\dd z_m\wedge \dd\overline z_m 
\lesssim m\log m. \end{equation}
Therefore,  we conclude by Lemma \ref{lem-C-n} below that
$$  \log \bP_n (H_{n,D}) \leq -m^2 \min \oI_D+  C m\log m.  $$
Notice that $C$ can be chosen independent of $D$.

\smallskip

For the lower bound, we apply Proposition~\ref{prop-oE-oF-oI} to get $\Omega_{m,D}$, so that
\begin{align*}
\bP_n (H_{n,D})&= \binom{n}{m}^{-1}\int_{\oR_{m,D}}  \Ac_m^*(V_{n}^{\FS}) \geq \binom{n}{m}^{-1}\int_{ \Omega_{m,D}  }  \Ac_m^*(V_{n}^{\FS}) \\
&= \binom{n}{m}^{-1}\int_{\mathbf p\in \Omega_{m,D}  } C_n  \exp\Big[m^2\Big(\oE_m(\mathbf p)- {2(m+1)\over m} \oF_m(\mathbf p)  \Big) \Big] \kappa_n(\mathbf p).
\end{align*}
It follows from  \eqref{ineq-oE-oF-oI} that 
\begin{equation}\label{log-Pn-great}
\log \bP_n (H_{n,D})\geq 
-\log \binom{n}{m} +\log C_n -m^2 \min \oI_D-  C_D m\log m  +  \log \int_{\mathbf p\in \Omega_{m,D} }  \kappa_n.
\end{equation}

It remains to estimate the integration of $\kappa_n$. 
Since $\Omega_{m,D}\subset \oS_{m,D}$ (see~\eqref{defn-oS-m}),  for any point $\mathbf p=p_1+\cdots+p_m\in \oS_{m,D}$ and $\mathbf q:=\oB_m(\mathbf p)=q_1+\cdots+q_g$, we have 
$$\prod_{j=1}^m\prod_{l=1}^g\dist(p_j,q_l)^2 
\geq
\prod_{j=1}^m\prod_{l=1}^g \frac{1}{m^2}=  m^{-2mg}. $$ 
Thus, applying Proposition~\ref{prop-formula} and reminding that $\Vol(\Omega_{m,D})\geq m^{-11 m}$ from Proposition~\ref{prop-oE-oF-oI}, the logarithm of the integration of $\kappa_n$ is no less than
\begin{align*}
O(m)+\log \int_{\mathbf p\in \Omega_{m,D} }  m^{-2mg}\, i\dd z_1\wedge \dd\overline z_1 \wedge\cdots \wedge i\dd z_m\wedge \dd\overline z_m  
\geq -O(m\log m). 
\end{align*}
Therefore,  we deduce from Lemma~\ref{lem-C-n} below that 
$$\log \bP_n (H_{n,D})\geq -m^2\min \oI_D   -C'_D  m\log m. $$
This completes the proof of Theorem \ref{thm-main-speed}.
\end{proof}

Some useful estimate of $C_n$ might be known in literature. For the sake of completeness, we provide a proof here.

\begin{lemma}\label{lem-C-n}
There exists a constant $c>0$ independent of $n$ and $D$ such that 
$$-c\, n\log n \leq \log C_n \leq c\, n \log n.$$
\end{lemma}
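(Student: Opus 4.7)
The plan starts from the defining normalization: since $V_n^{\FS}$ is a probability measure on $\P H^0(X,\oL^n)$ and $\Ac_m$ is generically $\binom{n}{m}$-to-one, integrating~\eqref{Zelditch formula} over $X^{(m)}\setminus\bH_m$ gives
\[
C_n\cdot I_n \;=\; \binom{n}{m},\qquad I_n := \int_{X^{(m)}\setminus\bH_m}\!\exp\!\Big[m^2\Big(\oE_m(\mathbf p)-\tfrac{2(m+1)}{m}\oF_m(\mathbf p)\Big)\Big]\,\kappa_n(\mathbf p).
\]
Hence $\log C_n = \log\binom{n}{m}-\log I_n$. Since $\binom{n}{m}=\binom{n}{g}=O(n^g)$ with $g$ fixed, $\log\binom{n}{m}=O(\log n)$ is negligible, and the lemma reduces to proving $|\log I_n|\le O(n\log n)$.

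For the upper bound $\log I_n\le O(n\log n)$, my plan is to replay the argument~\eqref{log-Pn-less}--\eqref{integral-log-kappa-upper} in the trivial ``hole'' case $D=\varnothing$. There $\oR_{m,\varnothing}=X^{(m)}\setminus\bH_m$, and the minimizer of $\oI_\varnothing$ on $\cM(X)$ is $\nu_\varnothing=\omega$ with $U_\omega\equiv 0$, so $\min\oI_\varnothing=0$ by Lemma~\ref{lem-oE-oI}. Combining Lemma~\ref{lem-f-m-p} with the identity $\int U'_{\delta_{\mathbf p}}\,\omega=\int U'_\omega\,\dd\delta_{\mathbf p}=0$ (which forces $\max U'_{\delta_{\mathbf p}}\ge 0$), I obtain $\sup_{\mathbf p}\big(\oE_m(\mathbf p)-\tfrac{2(m+1)}{m}\oF_m(\mathbf p)\big)\le O(\log m/m)$. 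Together with the crude bound $\log\int\kappa_n\le O(m\log m)$ from~\eqref{integral-log-kappa-upper}, this gives $\log I_n\le O(m\log m)$.

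For the lower bound $\log I_n\ge -O(n\log n)$, I will construct the $D=\varnothing$ analogue of the set $\Omega_{m,D}$ appearing in Proposition~\ref{prop-oE-oF-oI}, by running the Fekete-point machinery of Sections~\ref{sect: Fekete points of the hole event}--\ref{sect: Deviation of the two functionals} with external field $Q:=U'_\omega\equiv 0$. The equilibrium measure is $\nu_\varnothing=\omega$ with identically zero (hence trivially Lipschitz) potential, so Proposition~\ref{prop-fekete-seperate} will give Fekete-point separation $\dist(f_{m,j_1},f_{m,j_2})\gtrsim m^{-1}$, and the analogue of Proposition~\ref{prop-fekete-log-bound} will read $-\oE_m(\mathbf f_m),\,\max U'_{\delta_{\mathbf f_m}}\le O(\log m/m)$. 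Then Lemma~\ref{key lemma} applied with $\mathbf Z:=\oA_n(\oL^n)-\sum_{j=1}^{m-kg}\oA(f_{m,j})$ (the hole-event condition in Proposition~\ref{cor-fekete-section} being vacuous) produces a section of $\oL^n$ whose zeros agree with $\mathbf f_m$ in the first $m-kg$ entries and are well-separated from the remaining Abel--Jacobi points. Perturbing each zero within a ball of radius $m^{-5}$ and invoking Lemmas~\ref{lem-p-p'}--\ref{lem-modify-g-points} delivers an open $\Omega_m\subset\oS_{m,\varnothing}$ of volume $\ge m^{-11m}$ on which $\oE_m-\tfrac{2(m+1)}{m}\oF_m\ge -O(\log m/m)$. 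Since $\prod_{j,l}\dist(p_j,q_l)^2\ge m^{-2mg}$ on $\oS_{m,\varnothing}$, Proposition~\ref{prop-formula} yields $\kappa_n\ge e^{-O(m)}m^{-2mg}\cdot i\dd z_1\wedge\dd\overline z_1\wedge\cdots\wedge i\dd z_m\wedge\dd\overline z_m$ on $\Omega_m$, and altogether $\log I_n\ge -O(m\log m)$.

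The only non-routine step is certifying that the constructions of Sections~\ref{sect: Fekete points of the hole event}--\ref{sect: Deviation of the two functionals}, stated there for $D$ with non-empty smooth boundary, transfer to $D=\varnothing$. In this degenerate case matters simplify rather than complicate: the equilibrium measure is the smooth form $\omega$ with vanishing potential (so the regularity analysis of Section~\ref{sect: Regularity of the equilibrium measure} is trivially bypassed), $\supp(\nu_\varnothing)=X$ has no boundary to analyse, and the hole-avoidance requirement in Lemma~\ref{key lemma} and Proposition~\ref{cor-fekete-section} is vacuous since $X\setminus\varnothing=X$. The independence of the constant $c$ from $D$ asserted in the lemma is then automatic, as the whole argument is executed once for the fixed choice $D=\varnothing$.
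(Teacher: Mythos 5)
Your proposal is correct and follows essentially the same route as the paper: both exploit the normalization $\bP_n(H_{n,\varnothing})=1$ (equivalently, your identity $C_n\cdot I_n=\binom{n}{m}$) and then run the upper-bound argument from~\eqref{log-Pn-less} together with the Fekete-point construction of Sections~\ref{sect: Fekete points of the hole event}--\ref{sect: Deviation of the two functionals} specialized to $D=\varnothing$. You are slightly more explicit than the paper in two useful places: you verify that $\max U'_{\delta_{\mathbf p}}\geq 0$ via $\int U'_{\delta_{\mathbf p}}\,\omega=0$, and you check that the Fekete machinery actually simplifies (rather than breaks) when $D=\varnothing$, since $\nu_\varnothing=\omega$ has identically vanishing potential.
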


\begin{proof}
Note that $\bP_n (H_{n,\varnothing})= 1$ and $\min\oI_\varnothing =0$.  Repeating the argument in the proof of Theorem~\ref{thm-main-speed} above for $D=\varnothing$, we can deduce from~\eqref{log-Pn-less} that
$$\log C_n \geq m^2 \min\oI_\varnothing - c_1 m\log m=-c_1 m\log m$$
for some $c_1$ independent of $n$.

On the other hand, using the method of Fekete points upon the case $D=\varnothing$, we receive $\Omega_{m,\varnothing}\subset \oS_{m,\varnothing}$.  Repeating the same reasoning as~\eqref{log-Pn-great}, we can obtain that
$$\log \bP_n (H_{n,\varnothing})\geq 
-\log \binom{n}{m} +\log C_n -m^2 \min \oI_\varnothing-  c_2 m\log m  +  \log \int_{\mathbf p\in \Omega_{m,\varnothing} }  \kappa_n$$
for some $c_2$ independent of $n$.
This combining with~\eqref{integral-log-kappa-upper} yields 
$$\log C_n \leq m^2 \min\oI_\varnothing + c_3 m\log m =c_3 m\log m.$$
Hence we finish the proof.
\end{proof}

\begin{remark}\rm  \label{rmk-g=0}
For the proof of the case $g=0$, {\em i.e.}, $X=\P^1$, we can repeat the same strategy, by using a density formula of the  zeros, cf.~\cite[Proposition 3]{zei-zel-imrn}. The proof is much simpler because there is no $\mathbf q$ any more and we only need to deal with the functional $\oE_m(\mathbf p)- {2(m+1)\over m}\oF_m(\mathbf p)$ with $m=n$, where $\oF_m$ is given by
 $\oF_m(\mathbf p):=\log\big\|  e^{U'_{\delta_{\mathbf p}}}       \big\|_{L^{2m}(\omega_0)}.$
\end{remark}

\medskip

\section{Asymptotic behavior of the hole probabilities}
\label{sect: Asymptotic behavior of the hole probability}

To prove Theorem \ref{thm-main-hole}, it is enough to show the inequality for a fixed $x_0$ and all small $r$. The theorem will follow by a standard compactness argument.

Fix a point $x_0\in X$. For a radius $r>0$,  we denote $\oI_r:=\oI_{\B(x_0,r)}$ and $\nu_r:=\nu_{\B(x_0,r)}$ for abbreviation.
This section is devoted to prove the following proposition, which implies Theorem \ref{thm-main-hole} by Theorem \ref{thm-main-speed}.

\begin{proposition} \label{prop-estiamte-oI}
There exist constants $C>c>0$ independent of $r$ such that 
$$c r^4 \leq  \min \oI_r \leq Cr^4  \quad\text{for every}\quad 0<r\ll 1.$$
\end{proposition}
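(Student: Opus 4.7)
The strategy is to prove matching bounds $\min\oI_r\gtrsim r^4$ and $\min\oI_r\lesssim r^4$ by direct potential-theoretic computations localized near $x_0$. Fix a holomorphic chart around $x_0$ in which $\omega = c_0\,dA + O(|z|)\,dA$ for a positive constant $c_0 = c_0(x_0)$, with $dA$ the Euclidean area. The unifying tool is the identity
$$\oI_r(\nu) \;=\; \int_X dU_\nu\wedge d^c U_\nu \;-\; 2\int_X U_\nu\,\omega,$$
valid for every $\nu\in\cM(X\setminus \B(x_0,r))$ and obtained from $\ddc U_\nu+\omega = \nu$ by integration by parts; since $U_\nu\leq 0$, both terms on the right are nonnegative.

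For the lower bound, I would observe that any admissible $\nu$ puts no mass on $\mathrm{int}(\B(x_0,r))$, so $\ddc U_\nu = -\omega$ there; in the flat chart this is the Poisson equation $\Delta U_\nu = -2\pi c_0 + O(r)$. Hence $U_\nu + \tfrac{c_0\pi}{2}|z|^2$ is harmonic on $\B(x_0,r)$ modulo an $O(r)$ error, and the mean-value theorem then gives
$$\int_{\B(x_0,r)} U_\nu\,dA \;=\; \pi r^2\, U_\nu(x_0) \;-\; \tfrac{c_0\pi^2}{4}\, r^4 \;+\; O(r^5).$$
Since $U_\nu(x_0)\leq \max U_\nu = 0$, this yields $-\int_{\B(x_0,r)} U_\nu\,\omega \geq \tfrac{c_0^2\pi^2}{4}\,r^4 + O(r^5)$, and inserting into the identity above (discarding the remaining nonnegative contributions) gives $\oI_r(\nu)\geq c\,r^4$ uniformly in $\nu$.

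For the upper bound, I would produce an explicit test measure by prescribing a piecewise-radial quasi-potential $U$ in the chart. Taking $R := \sqrt{e}\,r$, I set $U\equiv 0$ outside $\B(x_0,R)$; on the annulus $r\leq|z|\leq R$ pick the unique rotationally symmetric solution of $\Delta U = -2\pi c_0$ whose value and radial derivative both vanish at $|z|=R$; and on $\B(x_0,r)$ extend by $U(z) = -\tfrac{c_0\pi}{2}|z|^2$. The distinguished value $R = \sqrt{e}\,r$ is precisely what forces the two formulas to match continuously at $|z|=r$ while keeping $\max U = 0$ and $U\leq 0$. The associated $\nu := \ddc U+\omega$ is then a probability measure on $X\setminus\B(x_0,r)$: it equals $\omega$ on $X\setminus\B(x_0,R)$, vanishes on the annulus and on the interior of $\B(x_0,r)$, and concentrates a positive singular mass on $\partial \B(x_0,r)$ coming from the radial-derivative jump of $U$. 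A direct integration of the explicit formulas yields $\oI_r(\nu) = \tfrac{c_0^2\pi^2 e^2}{4}\,r^4 + O(r^5)$, whence $\min\oI_r\leq C\,r^4$.

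The main technical issue is controlling the non-flatness of $\omega$: both the Poisson equation for $U_\nu$ and the explicit candidate $U$ are only approximate, with $O(r)$ pointwise errors coming from the higher-order Taylor terms of $\omega$. However, the leading deviation of the density from $c_0$ is linear in $z$ and integrates to zero over rotationally symmetric domains, so it contributes at most $O(r^5)$ to each energy---negligible next to the $r^4$ main terms. Together with Theorem~\ref{thm-main-speed} and the compactness argument over $x_0\in X$ mentioned above, this yields Proposition~\ref{prop-estiamte-oI} and hence Theorem~\ref{thm-main-hole}.
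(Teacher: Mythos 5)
Your overall strategy---reducing to a holomorphic chart centered at $x_0$ and carrying out an explicit local potential-theoretic computation---is the same as the paper's. For the lower bound, your argument (sub-mean-value property of $U_\nu$ on $\B(x_0,r)$, where $\ddc U_\nu = -\omega$, together with $U_\nu\leq 0$) is the paper's Proof~1 recast with the Poisson/mean-value representation in place of Jensen's formula; the two are interchangeable, and here the $O(r)$ non-flatness you flag genuinely only costs $O(r^5)$ since you retain a one-sided inequality at the end. This half is fine.

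For the upper bound, however, there is a real gap. Your test function $U$ solves $\Delta U=-2\pi c_0$ with the \emph{flat} constant $c_0$ on both $\B(x_0,r)$ and the annulus $\{r\le|z|\le \sqrt{e}\,r\}$. Hence on those regions the signed measure
$$\nu:=\ddc U+\omega=\omega-c_0\,dA=O(|z|)\,dA,$$
which need not be $\geq 0$. So $\nu$ is not (a priori) a positive measure, i.e.\ not an admissible competitor in $\cM(X\setminus \B(x_0,r))$, and exhibiting it does not bound $\min\oI_r$ from above. Your closing remark that the non-flatness contributes only $O(r^5)$ \emph{to the energy} does not resolve this: positivity of $\nu$ is a qualitative constraint on the test object, not a quantitative error term in its functional value. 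The paper's Lemma~\ref{function-Phi} is engineered precisely to dodge this: on the annulus it uses the quadratic barrier $\Phi_r(z)=-\beta_1(|z|-\ell r)^2$ and only checks the one-sided inequality $\ddc\Phi_r\geq-\omega_\D$ (so $\ddc\Phi_r+\omega_\D\geq 0$, with excess mass allowed), while solving the true elliptic equation $\ddc\Phi_r=-\omega_\D$ only inside $\D_r$. Any excess mass on the annulus is harmless for an upper bound; positivity is automatic. Your construction can be repaired in exactly this spirit (e.g.\ solve $\Delta U=-2\pi c_-$ on the annulus with $c_-:=\inf_{\B_R}(\omega/dA)$ and the genuine equation $\ddc U=-\omega$ inside $\B_r$, readjusting $R$ and the $C^0$-matching at $\partial\B_r$), but as written it does not establish that your $\nu$ lies in $\cM(X\setminus\B(x_0,r))$.
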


We decompose the proof into several lemmas below.

\begin{lemma}
For any probability measure $\mu$ supported on $X\setminus \B(x_0,r)$, one has $\oI_r(\mu )\geq c r^4$ for some $c>0$ independent of $r$ and $\mu$.
\end{lemma}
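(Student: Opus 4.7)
We may assume $\oI_r(\mu)<+\infty$; otherwise the bound is trivial. Working with the type M potential $U_\mu$ (so $\max U_\mu=0$, $U_\mu\leq 0$, and $\ddc U_\mu=\mu-\omega$), the first step will be to substitute $\mu=\ddc U_\mu+\omega$ and apply Stokes' formula on the closed manifold $X$ to recast the functional as a sum of two manifestly non-negative quantities:
\begin{equation*}
\oI_r(\mu)=-2\int_X U_\mu\,\omega-\int_X U_\mu\,\ddc U_\mu=2\int_X|U_\mu|\,\omega+\int_X \dd U_\mu\wedge \dc U_\mu.
\end{equation*}
It will then suffice to bound the Dirichlet energy from below by $c r^4$; in fact we will aim to do so using only its restriction to $B:=\B(x_0,r)$.

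For the latter, the key device will be a test function on $B$. Since $\mu|_B=0$, we have $\ddc U_\mu=-\omega$ on $B$. Fixing a local coordinate chart around $x_0$, we will solve the Dirichlet problem
\begin{equation*}
\ddc f=-\omega\ \text{on}\ B,\qquad f|_{\partial B}=0,
\end{equation*}
and, by comparison with the explicit quadratic solution under constant density (valid since $\omega$ has smooth, uniformly positive density near $x_0$), deduce both $f\geq 0$ on $B$ and
\begin{equation*}
\int_B f\,\omega\geq c_1 r^4
\end{equation*}
for some $c_1>0$ independent of $r$ and $\mu$, provided $r$ is small enough.

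Applying Stokes' formula on $B$ with $f|_{\partial B}=0$ and $\ddc U_\mu=\ddc f=-\omega$ will yield
\begin{equation*}
\int_B \dd f\wedge \dc U_\mu=-\int_B f\,\ddc U_\mu=\int_B f\,\omega=\int_B \dd f\wedge \dc f,
\end{equation*}
and the Cauchy--Schwarz inequality will then give
\begin{equation*}
\int_B f\,\omega=\int_B \dd f\wedge \dc U_\mu\leq \Big(\int_B \dd f\wedge \dc f\Big)^{1/2}\Big(\int_B \dd U_\mu\wedge \dc U_\mu\Big)^{1/2}=\Big(\int_B f\,\omega\Big)^{1/2}\|\dd U_\mu\|_{L^2(B)}.
\end{equation*}
Dividing through, we obtain $\|\dd U_\mu\|_{L^2(B)}^2\geq\int_B f\,\omega\geq c_1 r^4$, which combined with the decomposition above will prove $\oI_r(\mu)\geq c_1 r^4$.

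The main obstacle to address will be the justification of the integrations by parts when $U_\mu$ is only $\omega$-subharmonic, hence possibly non-smooth. The finiteness assumption $\oI_r(\mu)<+\infty$ will force $U_\mu$ to lie in $W^{1,2}(X)$ via the identity above, after which a standard regularization and passage to the limit should legitimize all the manipulations.
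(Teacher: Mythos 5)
Your proof is correct, and it takes a genuinely different route from both of the paper's proofs. The paper works exclusively with the first piece of $\oI_r(\mu)$, namely $-\int_X U_\mu\,\omega$: Proof~1 uses Jensen's formula to show that the circular averages of the flattened potential are $\lesssim -t^2$ at radius $t$, which integrates up to the $r^4$ bound; Proof~2 is a blow-up/compactness argument, rescaling a hypothetical sequence of counterexamples by $\tau_{r_j}$ to contradict $\ddc\Phi_j\gtrsim i\,\dd z\wedge\dd\bar z$. Neither proof touches the second piece $-\int_X U_\mu\,\dd\mu$ except to discard it as non-negative. You instead rewrite $\oI_r(\mu)=2\int_X|U_\mu|\,\omega+\int_X\dd U_\mu\wedge\dc U_\mu$ and bound only the Dirichlet energy, using the local Dirichlet-problem solution $f$ as a test function: because $\ddc U_\mu=\ddc f=-\omega$ on $B$ and $f|_{\partial B}=0$, the cross-pairing $\int_B\dd f\wedge\dc U_\mu$ is forced to equal $\int_B f\,\omega=\int_B\dd f\wedge\dc f\gtrsim r^4$ regardless of $\mu$, and Cauchy--Schwarz transfers this to $\int_B\dd U_\mu\wedge\dc U_\mu$. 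This is clean and conceptually natural, since $\oI_r$ really is a logarithmic energy. (You can even skip Cauchy--Schwarz: $h:=U_\mu-f$ is harmonic on $B$, the cross term $\int_B\dd f\wedge\dc h$ vanishes since $f|_{\partial B}=0$, so $\int_B\dd U_\mu\wedge\dc U_\mu=\int_B\dd f\wedge\dc f+\int_B\dd h\wedge\dc h\geq\int_B f\,\omega$, a Pythagorean identity.)

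One remark on the regularization at the end: as phrased it is slightly circular, since you invoke the finiteness of $\oI_r(\mu)$ to deduce $U_\mu\in W^{1,2}(X)$ "via the identity above", but the identity $-\int_X U_\mu\,\ddc U_\mu=\int_X\dd U_\mu\wedge\dc U_\mu$ is precisely the integration by parts that needs justifying for a mere $\omega$-subharmonic $U_\mu$. The non-circular statement is that $\oI_r(\mu)<+\infty$ is equivalent to finiteness of the logarithmic energy $\iint G\,\dd\mu\,\dd\mu$, which is classically equivalent to $U'_\mu\in W^{1,2}(X)$; this justifies the global Stokes formula, and then the local integration by parts on $B$ with $f|_{\partial B}=0$ is standard. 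Alternatively, run your argument on $\B(x_0,\rho r)$ with $\rho<1$, where $h=U_\mu-f_\rho$ is smooth up to the boundary, and pass $\rho\to 1$.
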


\begin{proof}[Proof 1.]
Take a local coordinate function $\phi: V\rightarrow \mathbb{C}$
 with $\phi(x_0)=0$, where $V$ contains $\overline{\B}(x_0,r_0)$ for some small $r_0>0$.
By compactness,
we can find a sufficiently small $\epsilon_0>0$
such that both  
$\omega_0$ and $\omega$ are greater than $\epsilon_0^2\cdot \phi^*\ddc |z|^2$ on $\overline{\B}(x_0,r_0)$,
 where $z$ is the  coordinate of $\C$.
For any probability measure $\mu$ supported on $X\setminus \B(x_0,r)$ with $r<r_0$,
the $\omega$-potential $U_{\mu}$ of $\mu$ satisfies the equation
$\ddc U_{\mu}=-\omega$
on  $\B(x_0,r)$.
Note that for any $r\in [0, r_0]$,
the range $\phi(\overline{\B}(x, r))$ 
contains $\overline{\D}(0, \epsilon_1\cdot r)\subset \mathbb{C}$ for some $\epsilon_1=\mathsf{const}\cdot\epsilon_0$.
We can check that
$\widehat{U}_{\mu}:=U_{\mu}\circ \phi^{-1}: \overline{\D}(0, \epsilon_1\cdot r_0)\rightarrow \R_{\leq 0}$
is strictly supharmonic
$$-\ddc \widehat{U}_{\mu}
=
-(\phi^{-1})^*
\ddc {U}_{\mu}
=
(\phi^{-1})^*\omega
\geq
(\phi^{-1})^*
(\epsilon_0^2\cdot \phi^*\ddc |z|^2)
=
\epsilon_0^2\cdot \ddc |z|^2.
$$
Using Jensen's formula (cf. e.g.~\cite{ Noguchi,MR4265173}),
for any  $0<t\leq \epsilon_1\cdot r_0$, we have
\[
 \widehat{U}_{\mu}(0)
-
\frac{1}{2\pi}
\int_{|z|=t} \widehat{U}_{\mu}
\dd \theta
=
-2\int_{x=0}^{t}
\frac{\dd x}{x}
\int_{\D_x}
\ddc \widehat{U}_{\mu}
\geq
2\int_{x=0}^{t}
\frac{\dd x}{x}
\int_{\D_x}
\epsilon_0^2\cdot \ddc |z|^2.
\]
Discarding the first term
$\widehat{U}_{\mu}(0)\leq 0$,
we obtain that
\[
-\frac{1}{2\pi}\int_{|z|=t} \widehat{U}_{\mu}
\dd \theta
\geq
2\int_{x=0}^{t}
\frac{\dd x}{x}
\int_{\D_x}
\epsilon_0^2\cdot \ddc |z|^2
=
\mathsf{const}\cdot \epsilon_0^2\cdot  t^2
\]
for any $0<t \leq \epsilon_1\cdot r_0$. 
Thus we conclude that
\[
-
\int_{\D_r}
\widehat{U}_{\mu}\,
\ddc |z|^2
=
-
\mathsf{const}
\cdot
\int_{t=0}^r
t\,\dd t \int_{|z|=t} \widehat{U}_{\mu}\dd \theta
\geq
-
\mathsf{const}
\cdot
\epsilon_0^2\cdot  r^4
\]
for any $0<r \leq \epsilon_1\cdot r_0$. Since ${U}_\mu\leq 0$,
we can bound $\cI_{r}(\mu)\geq -\int_{\B(x, r)} U_\mu \cdot \omega$ from below by
\[
-\int_{\phi(\B(x, r))} U_\mu\circ \phi^{-1} \cdot (\phi^{-1})^*\omega
\geq
-\int_{\D(0, \epsilon_1\cdot r)} \widehat{U}_\mu \cdot
\epsilon_0^2\,\ddc |z|^2
\geq
\mathsf{const}
\cdot
\epsilon_0^4\cdot  r^4
\]
for any $r\in (0, r_0)$.
\end{proof}

\begin{proof}[Proof 2.]
In the definition of $\oI_r$, $U_{\mu}$ is non-positive. It suffices to show that $\int_X U_\mu \,  \omega \geq  c r^4$. The problem is local, we can work on $\D\subset \C$.  More precisely, we will show that for any $\omega_\D$-subharmonic function $U_r$ with $U_r\leq 0$ on $\D$ and $\ddc U_r=-\omega_\D$ on $\D_r$, one has $-\int_{\D_r} U_r \,\omega_\D \geq cr^4$, where $\omega_\D:=f(z)\, i\dd z\wedge \dd\overline z$ is some fixed K\"ahler form on $\D$.

Suppose for contradiction, there exists a sequence $(U_j,r_j)$ with $\ddc U_j =-\omega_\D, U_j\leq 0$ and $\lim_{j\to \infty} r_j =0$, such that $$\lim_{j\to \infty} r^{-4} \int_{\D(0,r_j)} U_j \, \omega_\D =0.$$
Consider the dilation map $\tau_r: \D \to \D(0,r), z \mapsto rz$. Define 
$$\Phi_j:=- r_j^{-2} \tau_{r_j} ^* U_j.$$
Then on $\D$, for $j$ large, we have $\Phi_j \geq 0$  and
$$\ddc \Phi_j = r_j^{-2} \tau_{r_j}^*(-\ddc U_j)=r^{-2} \tau_{r_j}^*\omega_\D 
 = f(r_j z)\,i\dd z\wedge \dd\overline z\gtrsim i \dd z\wedge \dd \overline z.$$

 On the other hand, 
 $$\int_{\D} \Phi_j \, i \dd z\wedge \dd \overline z \lesssim \int_{\D} \Phi_j \, r^{-2} \tau_{r_j}^*\omega_\D =\int_{\D}  r^{-2}(\tau_{r_j})_* (\Phi_j ) \,  \omega_\D =- r^{-4}\int_{\D(0,r_j)}  U_j \, \omega_\D.$$
 This says the $L^1$-norm of $\Phi_j$ converges to $0$. It is not possible because $\ddc \Phi_j$ does not converge to $0$. So we get a contradiction. 
\end{proof}

To prove the upper bound, we construct a local quasi-subharmonic function first.

\begin{lemma}\label{function-Phi}
Let $\omega_\D$ be a K\"ahler form on $\D$. There exist an integer $\ell >2$ and a constant $\alpha >0$, such that for every small $0<r\ll 1$ , there exists a continuous $\omega_\D$-subharmonic function $\Phi_r$ on $\D$, such that $\Phi_r=0$ on $\D \setminus \D_{\ell r}$,  $-\alpha r^2 \leq \Phi_r \leq 0$ on $\D_{\ell r}$ and $\ddc \Phi_r =-\omega_\D$ on $\D_r$.
\end{lemma}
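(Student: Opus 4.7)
The plan is to patch together three explicit pieces: on $\D_r$ a local potential of $-\omega_\D$ (where the required equality $\ddc\Phi_r=-\omega_\D$ is enforced), on the annulus $\D_{\ell r}\setminus\D_r$ a rotationally symmetric barrier, and on $\D\setminus\D_{\ell r}$ the zero function. The gluing is performed via a pointwise maximum, which preserves $\omega_\D$-subharmonicity, and the barrier is chosen so that the result is $C^1$ across the outer boundary $\partial\D_{\ell r}$, ruling out any singular contribution to $\ddc\Phi_r$ there.

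First I would fix a smooth $u$ on $\D$ with $\ddc u=\omega_\D$. Writing $\omega_\D=f(z)\,i\,dz\wedge d\bar z$ with $f>0$, and using the freedom to modify $u$ by a harmonic polynomial, I would arrange $u(z)=\pi f(0)|z|^2+O(|z|^3)$ near $0$; this guarantees $0\le u(z)\le M|z|^2$ on a fixed disc $\D_{r_0}$ with $M$ and $r_0$ depending only on $\omega_\D$. Next I would produce a smooth non-decreasing profile $\psi:[0,\infty)\to[-1,0]$, independent of $r$, with $\psi\equiv-1$ on $[0,1]$, $\psi\equiv 0$ on $[\ell,\infty)$, $\psi'(\ell)=0$, and
\[
\psi''(s)+\psi'(s)/s\,\ge\,-C_0/\ell\qquad\text{on }[1,\ell]
\]
for some absolute $C_0$ (a standard mollified spline suffices). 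For $r\le r_0/\ell$ I then set
\[
\Phi_r(z):=\begin{cases}\max\bigl\{-u(z),\,\gamma r^2\psi(|z|/r)\bigr\}& z\in\D_{\ell r},\\ 0& z\in\D\setminus\D_{\ell r},\end{cases}
\]
with $\gamma$ chosen in the range $\bigl(M,\,4\pi(\inf f)\ell/C_0\bigr]$, which is non-empty once $\ell$ is taken large enough.

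The verification then proceeds as follows. On $\D_r$ we have $-u(z)\ge-Mr^2>-\gamma r^2=\gamma r^2\psi(|z|/r)$, so the maximum equals $-u$ and $\ddc\Phi_r=-\omega_\D$. On all of $\D_{\ell r}$ both arguments of the maximum are $\le 0$ (using $u\ge 0$ there), so $\Phi_r\le 0$, while $\Phi_r\ge-u\ge-M\ell^2 r^2$ gives the lower bound with $\alpha:=M\ell^2$. Global $\omega_\D$-subharmonicity follows because the two arguments are individually $\omega_\D$-sh — the first since $\ddc(-u)+\omega_\D=0$, the second by the radial computation $\Delta\bigl(\gamma r^2\psi(|z|/r)\bigr)=\gamma\bigl(\psi''(s)+\psi'(s)/s\bigr)\big|_{s=|z|/r}\ge-\gamma C_0/\ell\ge-4\pi\inf f$ — and the pointwise maximum inherits the property. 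Near $\partial\D_{\ell r}$ the radial barrier dominates $-u$ (the former tends to $0$ while $-u\le-c\ell^2 r^2<0$ there, by the lower bound $u\ge c|z|^2$), so $\Phi_r$ coincides with the barrier in a neighborhood of the outer boundary; the condition $\psi'(\ell)=0$ then makes $\Phi_r$ of class $C^1$ with vanishing normal derivative at $|z|=\ell r$, matching the constant $0$ outside continuously and producing no singular measure at $\partial\D_{\ell r}$.

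The main obstacle will be calibrating $\gamma$ and $\ell$ so that both constraints hold simultaneously: requiring the maximum to equal $-u$ on $\D_r$ forces $\gamma>M$, while $\omega_\D$-subharmonicity of the barrier forces $\gamma\le 4\pi(\inf f)\ell/C_0$. Reconciling these compels $\ell$ to be chosen sufficiently large relative to $M/\inf f$, a quantity depending only on $\omega_\D$. The slow radial transition that a large $\ell$ permits is precisely what keeps $\psi''+\psi'/s$ of order $1/\ell$, making the two conditions simultaneously satisfiable and yielding $\ell$ and $\alpha$ independent of $r$, as required.
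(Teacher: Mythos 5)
Your proof is correct, and it takes a genuinely different route from the paper's. Both arguments build $\Phi_r$ from three radial pieces (an inner disc where $\ddc\Phi_r=-\omega_\D$ holds exactly, a transition annulus, and zero outside), but the gluing mechanisms differ. The paper places the explicit parabola $-\beta_1(|z|-\ell r)^2$ on the annulus, defines the inner piece by solving the Dirichlet problem $\ddc\Phi_r=-\omega_\D$ on $\D_r$ with the matching constant boundary value $-\beta_1(\ell-1)^2r^2$, and then proves nonpositivity of that solution by comparison with an explicit Euclidean solution $\Psi_r$; the constraint $\ell\ge\sqrt{\beta_2/\beta_1}+1$ comes from that comparison. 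You instead take the inner piece to be $-u$ for a local smooth potential $u$ of $\omega_\D$ normalized so that $0\le u\le M|z|^2$, and glue by a pointwise maximum with a flat-topped radial barrier $\gamma r^2\psi(|z|/r)$. Your maximum construction buys automatic $\omega_\D$-subharmonicity across the inner interface $\partial\D_r$ (the maximum of two $\omega_\D$-subharmonic functions is $\omega_\D$-subharmonic), a point the paper leaves implicit: there one must still observe, via the maximum principle, that the outward normal derivative of the superharmonic inner solution at $\partial\D_r$ is nonpositive while that of the parabola is positive, so the singular contribution to $\ddc\Phi_r$ along $\partial\D_r$ has the favorable sign. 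In exchange you must design a spline profile $\psi$ with $\psi''+\psi'/s\ge-C_0/\ell$ and calibrate $\gamma$ in the window $(M,\,4\pi(\inf f)\ell/C_0]$, which is a bit more bookkeeping than the paper's single inequality on $\ell$. Both calibrations express that $\ell$ must be large compared with the eccentricity of $\omega_\D$ relative to the flat metric, and both yield $\alpha$ of order $\ell^2$ times a constant depending only on $\omega_\D$.
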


\begin{proof}
    Take two positive constants $\beta_1, 
    \beta_2>0$ such that
    $\beta_1 \, \ddc |z|^2 \leq \omega_\D  \leq \beta_2 \, \ddc |z|^2 $ on $\D_{1/2}$.
    Let $\ell $ be a large positive integer to be determined. Define an auxiliary function
$$\Phi_r(z):=-\beta_1(|z|-\ell r)^2$$
on $ \D_{\ell r}\setminus \D_{r}$, where $r\in (0, 1/(2\ell)]$. Direct computation shows that 
$$\ddc \Phi_r=-\beta_1 \, \ddc|z|^2+2\beta_1\ell r \,\ddc |z|\geq -\beta_1\, \ddc|z|^2
\geq
-\omega_\D$$
on $\D_{\ell r}\setminus \D_{r}\subset \D_{1/2}$. Next, we trivially extend $\Phi_r=0$ outside $\D_{\ell r}$, and we extend
$\Phi_r$ inside $\D_{r}$
as the unique solution to the elliptic equation
$\ddc \Phi_r=-\omega_{\D}$ with the boundary condition
$\Phi_r|_{\partial\D_r}=-\beta_1(\ell-1)^2 r^2$.

Now we show that $\Phi_r\leq 0$ on $\mathbb{D}_r$.
To do this, we introduce a comparison function $\Psi_r$ which solves the equation
$\ddc \Psi_r=-\beta_2\,\ddc |z|^2$ with the same boundary condition that
$\Psi_r|_{\partial\D_r}=-\beta_1(\ell-1)^2 r^2$. Noting that $\ddc (\Phi_r-\Psi_r)=-\omega_{\D}+\beta_2\,\ddc |z|^2\geq 0$ with the vanishing boundary values $(\Phi_r-\Psi_r)|_{\partial\D_r}=0$, by the maximal principle, $\Phi_r-\Psi_r\leq 0$ on $\D_r$. Thus we only need to check that $ \Psi_r\leq 0$. Indeed, it is easy to see that
\[
\Psi(z)
=
-\beta_1(\ell-1)^2 r^2
+
\beta_2\cdot
(r^2
-
|z|^2)
\leq
(-\beta_1(\ell-1)^2+\beta_2)\cdot r^2
\leq 0
\]
for $z\in\D_r$
as long as $\ell\geq \sqrt{\beta_2/\beta_1}+1$.
It is clear that $\Phi_r$ is a continuous $\omega_\D$-subharmonic function and satisfies all the desired properties with $\alpha = -\beta_1(\ell -1)^2$.
\end{proof}

\begin{lemma}
For every  small $0<r\ll 1$, there exists a probability measure $\mu$ supported outside $\B(x_0,r)$ such that $\oI_r(\mu)\leq C r^4$ for some $C>0$ independent of $r$.
\end{lemma}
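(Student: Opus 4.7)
The plan is to build the desired probability measure as $\mu := \omega + \ddc \widetilde{\Phi}_{r'}$, where $\widetilde{\Phi}_{r'}$ is obtained by pulling back the model function $\Phi_{r'}$ of Lemma~\ref{function-Phi} to $X$ via a local chart and extending by zero. First I will fix a holomorphic chart $\phi\colon V\to\mathbb{D}$ with $\phi(x_0)=0$ on some neighborhood $V$ of $x_0$; by compactness, there exists a constant $c_0>0$, independent of $r$, such that $\phi^{-1}(\mathbb{D}_{c_0 r})\supset \B(x_0,r)$ and $\phi^{-1}(\mathbb{D}_{\ell c_0 r})\subset V$ for all sufficiently small $r>0$. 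Let $\omega_{\mathbb{D}}:=(\phi^{-1})^*\omega$, apply Lemma~\ref{function-Phi} with radius $r':=c_0 r$, and set $\widetilde{\Phi}_{r'}:=\phi^*\Phi_{r'}$ on $V$, extended by $0$ to all of $X$. Since $\Phi_{r'}\equiv 0$ outside $\mathbb{D}_{\ell r'}$, the extension is continuous and satisfies $\ddc\widetilde{\Phi}_{r'}+\omega\geq 0$ globally, so $\widetilde{\Phi}_{r'}$ is an $\omega$-subharmonic function with $-\alpha (r')^2\leq \widetilde{\Phi}_{r'}\leq 0$ and $\max\widetilde{\Phi}_{r'}=0$.

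Next I will verify that $\mu:=\omega+\ddc\widetilde{\Phi}_{r'}$ is a probability measure supported in $X\setminus\B(x_0,r)$. Positivity follows from $\omega$-subharmonicity, total mass is $1$ by Stokes' theorem, and on the open set $\phi^{-1}(\mathbb{D}_{r'})\supset \B(x_0,r)$ one has $\ddc\widetilde{\Phi}_{r'}=-\omega$, so $\mu=0$ there. From $\mu=\omega+\ddc\widetilde{\Phi}_{r'}$ and $\max\widetilde{\Phi}_{r'}=0$, the type-M potential is exactly $U_\mu=\widetilde{\Phi}_{r'}$.

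Finally I will estimate the two terms defining $\cI_r(\mu)=-\int_X U_\mu\,\omega-\int_X U_\mu\,\diff\mu$. Since $\widetilde{\Phi}_{r'}$ is supported on $\phi^{-1}(\mathbb{D}_{\ell r'})$ and takes values in $[-\alpha(r')^2,0]$, the first term is bounded by $\alpha(r')^2\cdot\omega(\phi^{-1}(\mathbb{D}_{\ell r'}))$. The $\omega$-mass of a ball of coordinate radius $\ell r'$ is $O((r')^2)$ by smoothness of $\omega$, so this contribution is $O(r^4)$. For the second term, note that outside $\phi^{-1}(\mathbb{D}_{\ell r'})$ we have $\widetilde{\Phi}_{r'}=0$ and hence $\mu=\omega$, so
\[
\mu\bigl(\phi^{-1}(\mathbb{D}_{\ell r'})\bigr)=1-\mu\bigl(X\setminus\phi^{-1}(\mathbb{D}_{\ell r'})\bigr)=1-\omega\bigl(X\setminus\phi^{-1}(\mathbb{D}_{\ell r'})\bigr)=\omega\bigl(\phi^{-1}(\mathbb{D}_{\ell r'})\bigr),
\]
which is again $O(r^2)$. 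Thus $-\int_X\widetilde{\Phi}_{r'}\,\diff\mu\leq \alpha(r')^2\cdot O(r^2)=O(r^4)$, and combining both bounds gives $\cI_r(\mu)\leq C r^4$ for a constant $C$ independent of $r$.

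The only genuine obstacle is a bookkeeping one: reconciling the coordinate radius $r'$ in the chart with the intrinsic $\omega_0$-radius $r$ on $X$ defining $\B(x_0,r)$. This is handled once and for all by the compactness argument in the first paragraph, which produces the uniform constant $c_0$ relating the two scales; afterwards, every quantity scales like $(r')^2$ or $(r')^4$, which is the same as $r^2$ or $r^4$ up to constants, and the estimate follows immediately.
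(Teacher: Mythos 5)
Your proof is correct and takes essentially the same approach as the paper: define $\mu := \ddc\Phi + \omega$ from the model function of Lemma~\ref{function-Phi}, observe that $\Phi$ is the type-M potential of $\mu$ and that $\mu(\B(x_0,\ell r)) = \omega(\B(x_0,\ell r)) = O(r^2)$, and combine with $|\Phi| \leq \alpha r^2$ to get $\oI_r(\mu) \lesssim r^4$. The only difference is that you spell out the coordinate-chart transfer from $\mathbb{D}$ to $X$ and the uniform comparison of radii, which the paper leaves implicit.
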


\begin{proof}
Using Lemma \ref{function-Phi},  we can find an $\omega$-subharmonic function $\Phi_r$ on $X$ such that $\Phi_r=0$ on $X\setminus \B(x_0,\ell r)$, $-\alpha r^2 \leq \Phi_r \leq 0$ on $\B(x_0, \ell r)$ and $\ddc \Phi_r =-\omega$ on $\B(x_0,r)$, where $\ell,\alpha$ are independent of $r$. Let $\mu := \ddc \Phi_r +\omega$. Obviously, $\supp(\mu)\subset X\setminus \B(x_0,r)$. Observe that $\Phi_r$ is actually the $\omega$-potential of type M of $\mu$. Furthermore, since $\mu=\omega$ outside $\overline \B(x_0,\ell r)$, the mass of $\mu$ on $\overline \B(x_0,\ell r)$ is equal to $\omega(\overline \B(x_0,\ell r))$. Hence $$\oI_r(\mu) =-\int_{\overline \B(x_0,\ell r)} \Phi_r \,\dd \mu -\int_{\overline \B(x_0,\ell r)} \Phi_r \,\omega 
\leq
\alpha r^2\cdot
\int_{\overline \B(x_0,\ell r)} (\dd \mu+\omega)   \lesssim r^4.$$
This ends the proof.
\end{proof}

\bigskip

\noindent\textbf{Acknowledgements.}
Hao Wu wants to thank  Professor  Tien-Cuong Dinh for helpful discussions around the functional $\oI_D$.
Song-Yan Xie would like to 
 thank Dr. Bingxiao Liu for giving an online short course on higher dimensional hole probabilities in AMSS SCV seminar.

\bigskip

\noindent\textbf{Funding.} 
Song-Yan Xie is
partially supported by 
National Key R\&D Program of China Grants
No.~2021YFA1003100, 
No.~2023YFA1010500 and  National Natural Science Foundation of China Grant No.~12288201. 
Hao Wu is supported by the Singapore Ministry of Education Grant MOE-T2EP20121-0013.


\medskip

\end{document}